\newtheorem{remark}{Remark}[section]
\newtheorem{theorem}{Theorem}[section]
\newtheorem{lemma}[theorem]{Lemma}
\newtheorem{corollary}[theorem]{Corollary}
\newtheorem{definition}[theorem]{Definition}
\newtheorem{example}[theorem]{Example}
\newtheorem{assumption}{Assumption}
\numberwithin{equation}{section}
\def\qed{\hfill$\Box$\vspace{8pt}}
\begin{document}
\title{More than one Author with different Affiliations}
\author[1]{Qigang Liang}
\author[2]{Wei Wang}
\author[1,3]{Xuejun Xu}

\affil[1]{\small School of Mathematical Science, Tongji University, Shanghai 200092, China, qigang$\_$liang@tongji.edu.cn}
\affil[2]{\small School of Mathematics, University of Minnesota, Minneapolis, MN 55455, USA, wang9585@umn.edu}
\affil[3]{\small Institute of Computational Mathematics, AMSS, Chinese Academy of Sciences, Beijing 100190, China, xxj@lsec.cc.ac.cn}
\title{A Two-Level Block Preconditioned Jacobi-Davidson Method for Multiple and Clustered Eigenvalues of Elliptic Operators}\date{}
\maketitle

{\bf{Abstract}:}\ \ In this paper, we propose a two-level block preconditioned Jacobi-Davidson (BPJD) method for efficiently solving discrete eigenvalue problems resulting from finite element approximations of $2m$th ($m = 1, 2$) order symmetric elliptic eigenvalue problems. Our method works effectively to compute the first several eigenpairs, including both multiple and clustered eigenvalues with corresponding eigenfunctions, particularly. The method is highly parallelizable by constructing a new and efficient preconditioner using an overlapping domain decomposition (DD). It only requires computing a couple of small scale parallel subproblems and a quite small scale eigenvalue problem per iteration. Our theoretical analysis reveals that the convergence rate of the method is bounded by $c(H)(1-C\frac{\delta^{2m-1}}{H^{2m-1}})^{2}$, where $H$ is the diameter of subdomains and $\delta$ is the overlapping size among subdomains. The constant $C$ is independent of the mesh size $h$ and the internal gaps among the target eigenvalues, demonstrating that our method is optimal and cluster robust. Meanwhile, the $H$-dependent constant $c(H)$ decreases monotonically to $1$, as $H \to 0$, which means that more subdomains lead to the better convergence rate. Numerical results supporting our theory are given.

{\bf{Keywords}:}\ \ PDE eigenvalue problems, finite element discretization, multiple and clustered eigenvalues, preconditioned Jacobi-Davidson method, overlapping domain decomposition. \hspace*{2pt}

\section{Introduction}

\par Solving large scale eigenvalue problems arising from the discretization of partial differential operators by finite element methods is one of the fundamental problems in modern science and engineering. The problem is essential and has been extensively studied in the literature (see, e.g., \cite{Boffi,Lin1,Auke1,yang2015shifted,xu2020parallel,Dominik1,Cai,WX1,WX2}). However, unlike boundary value problems, there are fewer parallel solvers available for solving PDE eigenvalue problems, especially when it comes to computing multiple and clustered eigenvalues, which poses a greater challenge. To address this issue, we propose a two-level block preconditioned Jacobi-Davidson (BPJD) method that can compute multiple and clustered eigenvalues. Our method utilizes a parallel preconditioner constructed through an overlapping domain decomposition (DD), with a rigorous theoretical analysis, which demonstrates to be optimal and scalable. Specifically, the convergence rate does not deteriorate as the fine mesh size $h\to 0$, or the number of subdomains increases. In particular, our method is cluster robust, meaning that the convergence rate is not negatively impacted by gaps among the clustered eigenvalues.

\par For elliptic eigenvalue problems, Babuška and Osborn in \cite{babuvska1989finite} employed the finite element method to compute eigenpairs. Two-grid methods have also been widely adopted, and achieve asymptotic optimal accuracy under the conditions that $h = O(H^{i})$ respectively (here $H$ represents the coarse mesh size, $i = 2$ or $4$), as evidenced in \cite{Aihui,Xiaoliang,Yang1}. For discrete PDE eigenvalue problems, various classical iterative algorithms have been applied (see \cite{Parlett1,YousefSaadbook1,SIAMReviewnew1}), 
among which the Jacobi-Davidson method proposed in \cite{sleijpen} is one of the most popular methods in practice. The Jacobi-Davidson method has been successfully applied to a variety of practical computations, including Maxwell eigenvalue problems \cite{Lin1}, magnetohydrodynamics (MHD) eigenvalue problems \cite{Auke1}, polynomial PDE eigenvalue problems \cite{Dominik1,Cai}, and computations of large singular value decomposition \cite{Huang1} and so forth.

\par When dealing with large scale discrete PDE eigenvalue problems, preconditioning techniques are usually required (see \cite{Knyazeveigensolvers1}). Cai et al. \cite{CaiZhiqiang1997} and McCormick \cite{McCormick1} have proposed several multigrid methods for computing the eigenpairs. Recently, a range of multilevel correction methods have been studied for solving elliptic eigenvalue problems (see \cite{chen2016full,xu2020parallel}). Yang et al. \cite{yang2015shifted} has proposed an alternative multilevel correction method based on the shift and inverse technique for solving elliptic eigenvalue problems.

\par It is widely known that domain decomposition methods perform better than multigrid (MG) methods in terms of parallelism. Lui \cite{lui2000domain} proposed some two-subdomain DD methods to compute the principal eigenpair through solving an interface problem. For many subdomain cases, Maliassov \cite{maliassov1996schwarz} constructed a Schwarz alternating method to solve the eigenvalue problem, which can be proven to be convergent under a suitable assumption. According to dealing with an interface condition, Genseberger \cite{GensebergerMenno1} presented some eigensolvers by combining the Jacobi-Davidson method with non-overlapping domain decomposition methods. Zhao et al. \cite{Cai,zhao2016parallel} proposed a two-level preconditioned Jacobi-Davidson (PJD) method for a quintic polynomial eigenvalue problem. Wang and Xu \cite{WX2} developed a domain decomposition method to precondition the Jacobi-Davidson correction equation in one step during every outer iteration, with theoretical analysis for $2m$th ($m = 1, 2$) order elliptic operators presented. Wang and Zhang \cite{MR4396363} designed DD methods for eigenvalue problems based on the spectral element discretization. More recently, Liang and Xu \cite{LQGXXJ} presented a two-level preconditioned Helmholtz-Jacobi-Davidson (PHJD) method for the Maxwell eigenvalue problem, which works well in practical computations and has been proven to be optimal and scalable. 

\par For computing multiple and clustered eigenvalues of PDE eigenvalue problems, Knyazev and Osborn in \cite{knyazev2006new} gave the a priori error estimates for multiple and clustered eigenvalues of symmetric elliptic eigenvalue problems. Dai et al. \cite{MR3407250} developed an a posteriori error estimator for multiple eigenvalue, and proved the convergence and quasi-optimal complexity of the adaptive finite element methods (AFEM). Subsequently, Gallistl \cite{MR3347459} studied clustered eigenvalues, and proved the convergence and quasi-optimal complexity of AFEM. This idea has been further extended to the higher-order AFEM \cite{MR3532806}, the non-conforming AFEM \cite{MR3259027}, and the mixed AFEM \cite{MR3647956}. Lately, Canc\`es et al. \cite{MR4136540} presented an a posteriori error estimator for conforming finite element approximations of multiple and clustered eigenvalues of symmetric elliptic operators. They introduce some novel techniques to estimate the error in the sum of the eigenvalues. Additionally, for large scale discrete PDE eigenvalue problems, designing efficient solvers for multiple and clustered eigenvalues is a significant task, and will exceedingly benefit the high-performance computation. However, the theoretical analysis of the two-level PJD method in \cite{WX1,WX2} or the two-level PHJD method in \cite{LQGXXJ} is limited to the simple principal eigenvalue. To this end, we aim to construct efficient solvers with rigorous analysis for the first several eigenvalues comprehensively in this paper, including multiple and clustered eigenvalues, as well as their corresponding eigenspaces.

\par It is important to highlight that analyzing the two-level BPJD method from simple to multiple and clustered eigenvalues is a challenging task.
Firstly, it is essential to ensure that all constants involved in the convergence rate constants are independent of the internal gaps of the target eigenvalues. Secondly, as the eigenspace dimension corresponding to multiple eigenvalues is greater than one, most techniques developed for simple eigenvalues in \cite{WX1,WX2,LQGXXJ} are not applicable. Thirdly, measuring the distance requires the use of Hausdorff distance or gap in the theoretical analysis, rather than vector norm, which poses additional challenges. In this paper, using a combination of techniques, including constructing a well-designed auxiliary eigenvalue problem, developing a stable decomposition for the error space of target eigenvalues, and providing technical estimates for the gap between closed subspaces in Hilbert space, we successfully overcome the aforementioned difficulties. Consequently, we demonstrate that our method achieves optimal, scalable, and cluster robust convergence result, i.e.,
\begin{equation}\notag
\sum_{i=1}^{s}(\lambda_{i}^{k+1}-\lambda_{i}^{h})\leq \gamma \sum_{i=1}^{s}(\lambda_{i}^{k}-\lambda_{i}^{h}),
\end{equation}
where $\lambda_{i}^{k}$ is the current iterative approximation of the $i$th discrete eigenvalue $\lambda_{i}^{h}$, $\gamma=c(H)(1-C\frac{\delta^{2m-1}}{H^{2m-1}})^{2}$, the constant $C$ is independent of $h$, $H$, $\delta$ and internal gaps among the first $s$ eigenvalues, and the $H$-dependent constant $c(H)$ decreases monotonically to $1$, as $H\to 0$. Moreover, we have not any assumption on the relationship between $H$ and $h$, as well as the internal gaps among the first $s$ eigenvalues. Numerical results presented in this paper verify our theoretical findings.

\par The rest of this paper is organized as follows: Some preliminaries are introduced in Section 2. In Section 3, the two-level BPJD method for $2m$th order symmetric elliptic eigenvalue problems is proposed. Some properties of subspace method are presented in Section 4 and the main convergence analysis is given in Section 5. Finally we present our numerical results in Section 6 and the conclusion in Section 7.

\section{Model problems and preliminaries}

\par In this section, we first introduce some notations and model problems in subsection 2.1, the corresponding discrete counterpart in subsection 2.2, and then present some results on domain decomposition methods in subsection 2.3.

\subsection{Model problems}

\par Throughout this paper, we use the standard notations for the Sobolev spaces $W^{m,q}(\Omega)$ and $W_{0}^{m,q}(\Omega)$ with their associated norms and seminorms. We denote by $H^{m}(\Omega):=W^{m,q}(\Omega)$, $H_{0}^{m}(\Omega):=W_{0}^{m,q}(\Omega)$ for $q=2$, and denote by $L^{2}(\Omega):=H^{m}(\Omega)$ for $m=0$. Consider the Laplacian and biharmonic eigenvalue problems as follows:
\begin{equation}\label{Laplacian}
     \begin{cases}
         -\triangle u=\lambda u\ \ &\text{in $\Omega,$}\\
         \ \ \ \ \ u=0\ \ &\text{on $\partial\Omega,$}
     \end{cases}
\end{equation}
and
\begin{equation}\label{Biharmonic}
     \begin{cases}
         \triangle^{2} u=\lambda u\ \ &\text{in $\Omega,$}\\
         \frac{\partial u}{\partial \bm{n}}=u=0\ \ &\text{on $\partial\Omega.$}
     \end{cases}
\end{equation}
For simplicity, we assume that $\Omega$ is a convex polygonal domain in $\mathcal{R}^{2}$ and $\partial\Omega$ is the boundary of $\Omega$. We denote by $\bm{n}$ the unit outward normal vector of $\partial \Omega$.
\par The variational form of $2m$th order symmetric elliptic eigenvalue problems may be written as:
 \begin{equation}\label{Variational}
     \begin{cases}
       \text{Find $(\lambda,u)\in \mathcal{R}\times V$ such that $b(u,u)=1,$}\\
       a(u,v)=\lambda b(u,v)\ \ \ \ \forall\ v\in V,
     \end{cases}
\end{equation}
 where $V:=H_{0}^{m}(\Omega)$, the bilinear forms $a(\cdot,\cdot): V\times V\to \mathcal{R}$, $b(\cdot,\cdot): L^{2}(\Omega)\times L^{2}(\Omega)\to \mathcal{R}$ are symmetric and positive. Define $b(u,v):=\int_{\Omega}uv dx$ for all $u,v\in L^{2}(\Omega)$ and $||v||^{2}_{b}:=b(v,v)$ for all $v\in L^{2}(\Omega)$.
Specifically, for \eqref{Laplacian}, $$ a(u,v)=\int_{\Omega}\nabla{u}\cdot \nabla{v} dx$$ for all $u,v\in V=H_{0}^{1}(\Omega),$ and for \eqref{Biharmonic}, $$ a(u,v)=\int_{\Omega}\triangle{u}\triangle{v} dx$$ for all $u,v\in V=H_{0}^{2}(\Omega).$
It is easy to see that $a(\cdot,\cdot)$ constructs an inner product on $V$ and we define $||v||^{2}_{a}:=a(v,v)$ for all $v\in V$. For convenience, we denote by $Rq(v):=\frac{a(v,v)}{b(v,v)}>0$ for all $v\ (\ne 0)\in V$ the Rayleigh quotient functional. We also define $Rt(v):=\frac{1}{Rq(v)}$ for all $v\ (\ne 0)\in V.$
\par Define a linear operator $T: L^{2}(\Omega)\to V$ such that for any $f\in L^{2}(\Omega),$
\begin{equation}\label{equation_DefineT}
a(Tf,v)=b(f,v)\ \ \ \ \forall\ v\in V.
\end{equation}
Since $a(\cdot,\cdot)$ and $b(\cdot,\cdot)$ are symmetric and $V$ is embedded compactly in $L^{2}(\Omega)$, we know that $T:L^{2}(\Omega)\to L^{2}(\Omega)$ is compact and symmetric. Moreover, $T:V\to V$ is also compact and symmetric. By the Hilbert-Schmidt Theorem, we get that $Tu_{i}=\mu_{i}u_{i}\ (\mu_{i}=(\lambda_{i})^{-1})$, and the eigenvalues of \eqref{Variational} are
$$\lambda_{1}\leq \lambda_{2}\leq,...,\leq\lambda_{n}\to +\infty,$$
and the corresponding eigenvectors are $u_{1},u_{2},...,u_{n},...,$
which satisfy $a(u_{i},u_{j})=\lambda_{i}b(u_{i},u_{j})=\lambda_{i}\delta_{ij}$ ($\delta_{ij}$ represents the Kronecker delta). In the sequence $\{\lambda_{i}\}_{i=1}^{+\infty}$, $\lambda_{i}$ is repeated according to its geometric multiplicity. For convenience, we call $d^{-}_{i}\ (:=\lambda_{i}-\lambda_{i-1})$ and $d^{+}_{i}\ (:=\lambda_{i+1}-\lambda_{i})$ as the left gap and the right gap of the eigenvalue $\lambda_{i}$, respectively. In particular, $d^{-}_{1}:=\lambda_{1}$.
\par We are interested in the first $s$ eigenvalues $\{\lambda_{i}\}_{i=1}^{s}$ and the corresponding eigenvectors $\{u_{i}\}_{i=1}^{s}$. For our theoretical analysis, we first introduce a reasonable assumption.
\begin{assumption}\label{Assumption1}
Assume that there is an `obvious' gap between the $s^{th}$ eigenvalue $\lambda_{s}$ and the $(s+1)^{th}$ eigenvalue $\lambda_{s+1}$.
\end{assumption}
\begin{remark}
Assumption $\ref{Assumption1}$ excludes two cases:\ (i) $\lambda_{s}=\lambda_{s+1}$, (ii) $\lambda_{s}\approx\lambda_{s+1}$,
but there are not any assumptions about the left gaps of the eigenvalues $\{\lambda_{i}\}_{i=2}^{s}$. In practical computation, either for case $(i)$ or for case $(ii)$, we may consider the first $s+s_{1}$ eigenvalues so that $\lambda_{s+s_{1}}$ and $\lambda_{s+s_{1}+1}$ satisfy the Assumption 1, where $s_{1}(\geq 1)$ is a positive integer. So actually this assumption is not a limitation for our practical computation.
\end{remark}
\par It is known that the following spacial decomposition property holds
\begin{equation}\label{Equation_ConstinuousDecomposition}
V=U_{s}\oplus U_{s+1},
\end{equation}
where $U_{s}=$span$\{u_{1},u_{2},...,u_{s}\}$, $\oplus$ denotes the orthogonal direct sum with respect to $b(\cdot,\cdot)$ (also $a(\cdot,\cdot)$) and $U_{s+1}$ is the orthogonal complement of $U_{s}$.
\par In order to measure the `distance' between two closed subspaces included in a Hilbert space, we introduce the following definition. For more details, please see \cite{Kato}, Section 2 in \cite{knyazev2006new} and references therein.
\begin{definition}\label{Definition_gap}
For any Hilbert space $(X,(\cdot,\cdot))$, define $\Sigma_{X}:=\{\ W\ |\ \text{ $W$ is a closed subspace of $X$}\}.$
A binary mapping $\theta$ (called as the gap) $:\Sigma_{X}\times\Sigma_{X} \to [0,1]$ is defined by
\begin{equation}\notag
\theta(W_{1},W_{2})=\max\{\sin\{W_{1};W_{2}\},\sin\{W_{2};W_{1}\}\}\ \ \ \text{for all}\ \ W_{1},W_{2}\in \Sigma_{X},
\end{equation}
where
$$\sin\{W_{1};W_{2}\}=\sup_{u\in W_{1},||u||=1}\inf_{v\in W_{2}}||u-v|| \ \ \ \text{for all}\ \ W_{1},W_{2}\in \Sigma_{X},$$
with $||\cdot||$ being a norm induced by $(\cdot,\cdot)$ defined on $X$. If $W_{1}=0$, set $\sin(W_{1},W_{2})=0\ \text{for all}\ \ W_{2}\in \Sigma_{X}.$ If $W_{2}=0$, set $\sin(W_{1},W_{2})=1\ \text{for all}\ \ W_{1}(\ne 0)\in \Sigma_{X}.$
\end{definition}

\begin{remark}\label{Remark_Hausdorff}
For any Hilbert space $(X,(\cdot,\cdot))$, if $W_{1},W_{2}\in \Sigma_{X}$ and $\dim(W_{1})=\dim(W_{2})<+\infty$, it is easy to know that
$$\theta(W_{1},W_{2})=\sin\{W_{1};W_{2}\}=\sin\{W_{2};W_{1}\}.$$
For any $W_{1},W_{2},W_{3}\in\Sigma_{X}$ and $\dim(W_{1})=\dim(W_{2})=\dim(W_{3})<+\infty$, it is easy to check that $$\sin\{W_{1};W_{2}\}\leq \sin\{W_{1};W_{3}\}+\sin\{W_{3};W_{2}\}.$$
If $W_{1}={\rm span}\{u\}$, then $\sin\{W_{1};W_{2}\}$ is denoted through $\sin\{u;W_{2}\}$. Similarly, if $W_{2}={\rm span}\{v\}$, then $\sin\{W_{1};W_{2}\}$ is denoted through $\sin\{W_{1};v\}$.
\end{remark}
\par In the rest of this paper, we shall use the notations $\sin_{b}\{\cdot;\cdot\}$ and $\sin_{a}\{\cdot,\cdot\}$  with respect to  $b(\cdot,\cdot)$ and $a(\cdot,\cdot)$, respectively. We also denote by $\theta_{b}(\cdot,\cdot)$ and $\theta_{a}(\cdot,\cdot)$ the gaps with respect to  $b(\cdot,\cdot)$ and $a(\cdot,\cdot)$, respectively, denote by $\interleave\cdot\interleave_{b}$ and $\interleave\cdot\interleave_{a}$ the operator's norms with respect to  $b(\cdot,\cdot)$ and $a(\cdot,\cdot)$, respectively.

\subsection{Finite element discretization}
\par Let $V^{h}$ be a conforming finite element space based on a shape regular and quasi-uniform triangular or rectangular partition $\mathcal{J}_{h}$ with the mesh size $h$. We consider the discrete variational form of \eqref{Variational} as:
 \begin{equation}\label{DiscreteVariational}
     \begin{cases}
       \text{Find $(\lambda^{h},u^{h})\in \mathcal{R}\times V^{h}$ such that $||u^{h}||_{b}=1,$}\\
       a(u^{h},v)=\lambda^{h} b(u^{h},v)\ \ \ \forall\ v\in V^{h}.
     \end{cases}
\end{equation}
Define a discrete linear operator $T^{h}:L^{2}(\Omega)\to V^{h}$ such that for any $f\in L^{2}(\Omega)$,
\begin{equation}\label{equation_DefineTh}
a(T^{h}f,v)=b(f,v)\ \ \ \forall\ v\in V^{h}.
\end{equation}
It is easy to see that the operator $T^{h}$ is compact and symmetric (For convenience of notations, $T^{h}|_{V^{h}}$ is also denoted through $T^{h}$ in the following). Hence, we get that $T^{h}u_{i}^{h}=\mu_{i}^{h}u_{i}^{h}$ ($\mu_{i}^{h}=(\lambda_{i}^{h})^{-1}$). Meanwhile, the eigenvalues of \eqref{DiscreteVariational} are $\lambda_{1}^{h}\leq \lambda_{2}^{h}\leq,...,\leq\lambda_{nd}^{h}$
and the corresponding eigenvectors are $u_{1}^{h},u_{2}^{h},...,u_{nd}^{h},$
which satisfy $a(u_{i}^{h},u_{j}^{h})=\lambda_{i}^{h}b(u_{i}^{h},u_{j}^{h})=\lambda_{i}^{h}\delta_{ij}$ and $nd=\dim(V^{h})$.
We also define $A^{h}:V^{h}\to V^{h}$ such that
$b(A^{h}u,v)=a(u,v)$ for all $u,v\in V^{h},$ and it is obvious to see that $A^{h}u_{i}^{h}=\lambda_{i}^{h}u_{i}^{h}$.
\par  The finite element space $V^{h}$ may be decomposed as:
\begin{equation}\label{VhDecomposition}
V^{h}=U_{s}^{h}\oplus U^{h}_{s+1}=V_{1}^{h}\oplus V_{2}^{h}\oplus...\oplus V_{s}^{h}\oplus U_{s+1}^{h},
\end{equation}
where $U_{s}^{h}=V_{1}^{h}\oplus V_{2}^{h}\oplus ...\oplus V_{s}^{h}$, $V_{i}^{h}=$ span$\{u_{i}^{h}\}$, $i=1,2,...,s$ and $U_{s+1}^{h}$ denotes the $b(\cdot,\cdot)$-orthogonal (also $a(\cdot,\cdot)$-orthogonal) complement of $U_{s}^{h}$. Let $Q_{s}^{h},Q_{s+1}^{h}$ and $Q_{i,s}^{h}$ ($i=1,2,...,s$) be the $b(\cdot,\cdot)$-orthogonal (also $a(\cdot,\cdot)$-orthogonal) projectors from $V^{h}$ onto $U_{s}^{h},\ U_{s+1}^{h}$ and $V_{i}^{h}$ ($i=1,2,...,s$), respectively.
For any subspace $U\subset V^{h}$, $U^{\perp}$ represents the orthogonal complement of $U$ with respect to $b(\cdot,\cdot)$, and let $Q_{U}$ and $P_{U}$ be the $b(\cdot,\cdot)$-orthogonal and the $a(\cdot,\cdot)$-orthogonal projectors from $V^{h}$ onto $U$, respectively. If $U=$span$\{u\}$, then we denote $Q_{u}=Q_{U}$ and $P_{u}=P_{U}$. Unless otherwise stated, the letters $C$ (with or without subscripts) in this paper denote generic positive constants independent of $h$, $H,\ \delta$ and the left gaps of the eigenvalues $\{\lambda_{i}\}_{i=2}^{s}$, which may be different at different occurrences.

\subsubsection{The Laplacian eigenvalue problem}
\par In order to make the ideas clearer, we use $V^{h}$, the continuous piecewise and linear finite element space with vanishing trace, to approximate the Sobolev space $H_{0}^{1}(\Omega)$ for the Laplacian eigenvalue problem. The following a priori error estimates are useful in this paper. For the first conclusion in Theorem \ref{TheoremPriorLaplacian}, please see \cite{babuvska1989finite} and \cite{knyazev2006new} for more details. For the proof of \eqref{Equation_eigenvalueApriori}, please see Theorem 3.1 and Theorem 3.3 in \cite{knyazev2006new}. In order to focus on more our algorithm in Section 3 and the corresponding theoretical analysis in Section 4 and 5, we give proofs of \eqref{Equation_eigenvectorApriori1} and \eqref{Equation_eigenvectorApriori2} in Appendix.
\begin{theorem}\label{TheoremPriorLaplacian}
Let $\Omega$ be a bounded convex polygonal domain. If Assumption \ref{Assumption1} holds, then the eigenvalues of discrete problem \eqref{DiscreteVariational} $\lambda_{1}^{h},\lambda_{2}^{h},...,\lambda_{s}^{h}$ converge to the eigenvalues of problem \eqref{Variational} $\lambda_{1},\lambda_{2},...,\lambda_{s}$, respectively, as $h\to 0$. Moreover, there exists $h_{0}>0$ such that for $0<h<h_{0}$, the following inequalities hold:
\begin{equation}\label{Equation_eigenvalueApriori}
0\leq\lambda_{i}^{h}-\lambda_{i}\leq Ch^{2},\ \ \ \ i=1,2,...,s,
\end{equation}
and
\begin{equation}\label{Equation_eigenvectorApriori1}
\theta_{a}(U_{s},U_{s}^{h})\leq Ch,
\end{equation}
\begin{equation}\label{Equation_eigenvectorApriori2}
\theta_{b}(U_{s},U_{s}^{h})\leq Ch^{2},
\end{equation}
where the constant $C$ is independent of the left gaps of the eigenvalues $\{\lambda_{i}\}_{i=2}^{s}$, but depends on $d_{s}^{+}$,  $\theta_{a}(U_{s},U_{s}^{h})$ and $\theta_{b}(U_{s},U_{s}^{h})$ denote the gaps between $U_{s}$ and $U_{s}^{h}$ with respect to $||\cdot||_{a}$ and $||\cdot||_{b}$, respectively.
\end{theorem}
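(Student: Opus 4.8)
\textbf{Proof proposal for Theorem \ref{TheoremPriorLaplacian}.}

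The first statement (convergence of $\lambda_i^h \to \lambda_i$) and the eigenvalue estimate \eqref{Equation_eigenvalueApriori} are classical and I would simply cite \cite{babuvska1989finite} and Theorem 3.1/3.3 of \cite{knyazev2006new}, noting that the min--max characterization of eigenvalues together with the approximation property $\inf_{v\in V^h}\|u_i - v\|_a \le Ch\|u_i\|_{H^2}$ (valid since $\Omega$ is convex polygonal, so $u_i \in H^2(\Omega)$ by elliptic regularity) yields $\lambda_i \le \lambda_i^h \le \lambda_i + Ch^2$ with $C$ depending on the invariant subspace $U_s$ and on $d_s^+$ (to separate the first $s$ discrete eigenvalues from the rest) but not on the internal gaps $d_i^-$. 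So the real content left for the Appendix is \eqref{Equation_eigenvectorApriori1} and \eqref{Equation_eigenvectorApriori2}.

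For \eqref{Equation_eigenvectorApriori1}, the plan is to estimate $\sin_a\{U_s; U_s^h\}$ directly, exploiting that we are working with the \emph{whole} cluster $U_s = \mathrm{span}\{u_1,\dots,u_s\}$ rather than individual eigenvectors, which is exactly what makes the bound gap-independent. Take an arbitrary $u = \sum_{i=1}^s c_i u_i \in U_s$ with $\|u\|_a = 1$; I would bound $\inf_{v\in U_s^h}\|u-v\|_a$ from above by $\|u - P_{U_s^h} u\|_a$. Writing $w = T u = \sum c_i \mu_i u_i$ and $w^h = T^h u$, one has by \eqref{equation_DefineT} and \eqref{equation_DefineTh} the standard Galerkin-type identity $a(w - w^h, v) = 0$ for all $v \in V^h$, hence $\|w - w^h\|_a \le \inf_{v\in V^h}\|w - v\|_a \le Ch\|w\|_{H^2} \le Ch\|u\|_b \le Ch$ (using $\|u\|_b \le \lambda_1^{-1/2}\|u\|_a$ and elliptic regularity $\|w\|_{H^2}\le C\|u\|_b$). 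The point is then to relate $u$ to its projection onto $U_s^h$ via the operators $T, T^h$: since $Tu$ lies in $U_s$ up to the scaling $\mu_i$ and the discrete eigenspace $U_s^h$ is $T^h$-invariant, a spectral/resolvent argument — inserting the Riesz spectral projector $E = \frac{1}{2\pi i}\oint (z - T)^{-1}dz$ and its discrete analogue $E^h$ around the cluster $\{\mu_1,\dots,\mu_s\}$, whose contour separation is controlled by $d_s^+$ alone — gives $\|(E - E^h)u\|_a \le C\|(T - T^h)|_{U_s}\|_a \le Ch$, and $E^h u \in U_s^h$ while $Eu = u$. Taking the sup over $u$ finishes \eqref{Equation_eigenvectorApriori1}, and by Remark \ref{Remark_Hausdorff} ($\dim U_s = \dim U_s^h = s$) this controls $\theta_a$.

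For \eqref{Equation_eigenvectorApriori2} I would run the same resolvent argument but measure in $\|\cdot\|_b$ and use a duality (Aubin--Nitsche) improvement: the extra power of $h$ comes from $\|w - w^h\|_b \le Ch\|w-w^h\|_a + $ (a duality term) $\le Ch^2$, which feeds into $\|(E-E^h)u\|_b \le Ch^2$ in the same way. The main obstacle, and the reason this needs care beyond the simple-eigenvalue case, is keeping every constant independent of the internal gaps $d_i^-$ ($2\le i\le s$): this forces one to work with the cluster projector $E$ onto all of $U_s$ at once, never with individual $(z-T)^{-1}$ near a single $\mu_i$, so that the only spectral separation ever invoked is the contour distance to $\mu_{s+1}$, i.e.\ $d_s^+$. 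A secondary technical point is the triangle-type inequality for $\sin\{\cdot;\cdot\}$ in Remark \ref{Remark_Hausdorff} and the identification $\theta = \sin\{\cdot;\cdot\}$ for equal finite dimensions, which I would invoke to pass from the one-sided estimate $\sin\{U_s; U_s^h\}$ to the symmetric gap $\theta$; one must check $h_0$ is small enough that $\dim U_s^h = s$ and the cluster contour still encircles exactly $s$ discrete eigenvalues.
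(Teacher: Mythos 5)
Your proposal is correct and follows essentially the same route as the paper's Appendix proof: cite \cite{babuvska1989finite} and \cite{knyazev2006new} for the eigenvalue statements, then bound $\theta_{a}(U_{s},U_{s}^{h})$ (resp.\ $\theta_{b}$) by the difference of the cluster Riesz spectral projectors of $T$ and $T^{h}$ over a contour separated from the rest of the spectrum only by $d_{s}^{+}$, using $\interleave T-T^{h}\interleave_{a}\leq Ch$ and the Aubin--Nitsche improvement $\interleave T-T^{h}\interleave_{b}\leq Ch^{2}$, together with Remark \ref{Remark_Hausdorff} and the equality $\dim U_{s}=\dim U_{s}^{h}=s$. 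The only cosmetic difference is that you sketch the Galerkin-orthogonality derivation of the operator-norm bounds, which the paper simply quotes as standard finite element error estimates.
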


\subsubsection{The biharmonic eigenvalue problem}
\par For problem \eqref{Biharmonic}, we shall use $V^{h}$, the Bogner-Fox-Schmit (BFS) finite element space with vanishing trace and vanishing trace of outer normal derivative, to approximate the Sobolev space $H_{0}^{2}(\Omega)$. For more details about BFS finite element, please see \cite{WX2} and references therein. Under the regularity assumption that the eigenfunction $u_{i}\in H^{3}(\Omega)\cap H_{0}^{2}(\Omega)\ (i=1,2,...,s)$ for the biharmonic eigenvalue problem, we also have the same theoretical results as Theorem \ref{TheoremPriorLaplacian}.

\subsection{Domain decomposition}
\par In this subsection, we introduce some results on overlapping domain decomposition.
\par Let $\{\Omega_{l}\}_{l=1}^{N}$ be a coarse shape regular and quasi-uniform partition of $\Omega$, and we denote it by $\mathcal{J}_{H}$. We define $H:=\max\{H_{l}\ |\ l=1,2,...,N \}$, where $H_{l}=\text{diam}(\Omega_{l})$. The fine shape regular and quasi-uniform partition $\mathcal{J}_{h}$ is obtained by subdividing $\mathcal{J}_{H}$. We may construct the finite element spaces $V^{H}\subset V^{h}$ on $\mathcal{J}_{H}$ and $\mathcal{J}_{h}$, respectively. To get the overlapping subdomains $(\Omega_{l}^{'},\ 1\leq l\leq N)$, we enlarge the subdomains $\Omega_{l}$ by adding fine elements inside $\Omega$ layer by layer such that $\partial \Omega_{l}^{'}$ does not cut through any fine element. To measure the overlapping width between neighboring subdomains, we define $\delta:=\min\{\delta_{l}\ |\ l=1,2,...,N\}$, where $\delta_{l}=\text{dist}(\partial\Omega_{l}\setminus\partial\Omega,\partial\Omega_{l}^{'}\setminus\partial\Omega)$. We also assume that $H_{l}$ is the diameter of $\Omega_{l}^{'}$. Let $\Omega_{l,\delta_{l}}\ (\subset \Omega_{l}^{'})$ be the set of the points that are within a distance $\delta_{l}$ of $\partial\Omega_{l}^{'}\setminus\partial\Omega,\ l=1,2,...,N.$ The local subspaces may be defined by $V^{(l)}:=V^{h}\cap H_{0}^{1}(\Omega_{l}^{'})$ (for the Laplacian operator) or $V^{(l)}:=V^{h}\cap H_{0}^{2}(\Omega_{l}^{'})$ (for the biharmonic operator).
It is obvious to see $V^{(l)}\subset V^{h}$ by a trivial extension.

\begin{assumption}\label{Assumption2}
The partition $\{\Omega_{l}^{'}\}_{l=1}^{N}$ may be colored using at most $N_{0}$ colors, in such a way that subdomains with the same color are disjoint. The integer $N_{0}$ is independent of $N$.
\end{assumption}

\par There exists a family of continuous piecewise and linear functions $\{\theta_{l}\}_{l=1}^{N}$ which satisfy the following properties (see \cite{sarkis2002partition} or the Chapter 3 in \cite{ToselliM}):
\begin{equation}\label{unitypartition}
\text{supp}(\theta_{l})\subset \overline{\Omega_{l}^{'}},\ \ \ \
0 \leq \theta_{l}\leq 1,\ \ \ \
\sum_{l=1}^{N}\theta_{l}(x)=1,\ \ x\in \Omega,\ \ \ \
||\nabla{\theta_{l}}||_{0,\infty,\Omega_{l,\delta_{l}}}\leq \frac{C}{\delta_{l}}.
\end{equation}
We also note that $\nabla{\theta_{l}}$ differs from zero only in a strip $\Omega_{l,\delta_{l}}$. The strengthened Cauchy-Schwarz inequality holds over the local subspaces $V^{(l)}$, i.e., there exists $\eta_{pl}\ (0\leq\eta_{pl}\leq 1,\ 1\leq p,l\leq N)$ such that
$$|b(v^{(p)},v^{(l)})|\leq \eta_{pl}||v^{(p)}||_{b}||v^{(l)}||_{b}\ \ \ \ \  \ \forall\ v^{(p)}\in V^{(p)},\ v^{(l)}\in V^{(l)},\ 1\leq p,l\leq N.$$
Let $\rho(\Lambda)$ be the spectral radius of the matrix $(\eta_{pl})_{1\leq p,l\leq N}$, then the following result holds (see \cite{ToselliM}).
\begin{lemma}\label{Lemma_strengthenedCauchySchwarzinequality}
If Assumption \ref{Assumption2} holds, then $\rho(\Lambda)\leq N_{0}.$
Moreover, for any $v^{(p)}\in V^{(p)}, v^{(l)}\in V^{(l)}\ (p,l=1,2,...,N)$,
\begin{equation}\notag
\sum_{p,l=1}^{N}b(v^{(p)},v^{(l)})\leq N_{0}\sum_{l=1}^{N}b(v^{(l)},v^{(l)}),\ \ \ \sum_{p,l=1}^{N}a(v^{(p)},v^{(l)})\leq N_{0}\sum_{l=1}^{N}a(v^{(l)},v^{(l)}).
\end{equation}
\end{lemma}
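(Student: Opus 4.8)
The statement to prove is Lemma~\ref{Lemma_strengthenedCauchySchwarzinequality}: under the coloring Assumption~\ref{Assumption2}, the spectral radius $\rho(\Lambda)$ of the matrix $(\eta_{pl})$ satisfies $\rho(\Lambda)\le N_0$, and consequently the two summation bounds hold. Here is how I would proceed.

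\textbf{Step 1: Reduce the summation inequalities to a spectral radius bound.}
The plan is to observe first that, for any choice of local functions $v^{(l)}\in V^{(l)}$, the double sum $\sum_{p,l=1}^N b(v^{(p)},v^{(l)})$ is bounded above by $\sum_{p,l=1}^N \eta_{pl}\,\|v^{(p)}\|_b\,\|v^{(l)}\|_b$ via the strengthened Cauchy--Schwarz inequality recorded just above the lemma. Writing $\mathbf{x}=(\|v^{(1)}\|_b,\dots,\|v^{(N)}\|_b)^\top\in\mathcal{R}^N$ (a nonnegative vector), the right-hand side is exactly the quadratic form $\mathbf{x}^\top\Lambda\,\mathbf{x}$ with $\Lambda=(\eta_{pl})$. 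Since $\Lambda$ is symmetric and entrywise nonnegative, $\mathbf{x}^\top\Lambda\,\mathbf{x}\le\rho(\Lambda)\,\|\mathbf{x}\|_2^2=\rho(\Lambda)\sum_{l=1}^N\|v^{(l)}\|_b^2=\rho(\Lambda)\sum_{l=1}^N b(v^{(l)},v^{(l)})$. So once $\rho(\Lambda)\le N_0$ is established, the $b(\cdot,\cdot)$ inequality follows immediately; the $a(\cdot,\cdot)$ inequality follows in exactly the same way since an analogous strengthened Cauchy--Schwarz inequality holds for $a$ with the same coloring structure (the supports $\Omega_l'$ are unchanged), giving the same bound $\rho(\Lambda)\le N_0$ for the corresponding matrix.

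\textbf{Step 2: Bound $\rho(\Lambda)$ using the coloring.}
This is the heart of the argument. Group the indices $\{1,\dots,N\}$ into color classes $\mathcal{C}_1,\dots,\mathcal{C}_{N_0}$ so that within each $\mathcal{C}_j$ the subdomains $\Omega_l'$ are pairwise disjoint; hence $\eta_{pl}=0$ whenever $p\ne l$ lie in the same color class (disjoint supports force $b(v^{(p)},v^{(l)})=0$). Decompose $\Lambda=\sum_{j=1}^{N_0}\Lambda_j$ where $\Lambda_j$ keeps only the rows (equivalently, since $\Lambda$ is symmetric, it is cleaner to split by writing $\eta_{pl}=\sum_j \mathbf{1}[p\in\mathcal{C}_j]\,\eta_{pl}$ and bounding each block). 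A standard route: for any $j$, the matrix obtained by restricting to entries with $p\in\mathcal{C}_j$ has, in each such row $p$, all off-diagonal same-color entries zero, so by Gershgorin each such piece has spectral radius at most $\max_p \eta_{pp}\le 1$ in an appropriate block sense; summing over the $N_0$ colors yields $\rho(\Lambda)\le N_0\cdot\max_{p,l}\eta_{pl}\le N_0$. Cleanest is: $\rho(\Lambda)=\max_{\|\mathbf{x}\|_2=1}\mathbf{x}^\top\Lambda\mathbf{x}$, and for nonnegative $\mathbf{x}$, write $\mathbf{x}^\top\Lambda\mathbf{x}=\sum_{p,l}\eta_{pl}x_px_l\le\sum_{p,l}\eta_{pl}\frac{x_p^2+x_l^2}{2}=\sum_p x_p^2\big(\sum_l \eta_{pl}\big)$, and $\sum_l\eta_{pl}\le\sum_{j=1}^{N_0}\sum_{l\in\mathcal{C}_j}\eta_{pl}$; for each color $j$, at most one $l\in\mathcal{C}_j$ has $\Omega_l'$ overlapping $\Omega_p'$ (if $p\notin\mathcal{C}_j$; if $p\in\mathcal{C}_j$ then only $l=p$), so $\sum_{l\in\mathcal{C}_j}\eta_{pl}\le 1$, whence $\sum_l\eta_{pl}\le N_0$ and $\mathbf{x}^\top\Lambda\mathbf{x}\le N_0$. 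This is the step I expect to be the main obstacle, mostly in getting the color-class bookkeeping and the ``at most one overlapping neighbor per color'' claim stated precisely; the underlying idea is routine for domain decomposition but needs care with the indices and with the reduction to nonnegative $\mathbf{x}$ (which is legitimate since all $\eta_{pl}\ge0$, so the maximizing eigenvector can be taken nonnegative by Perron--Frobenius, or one simply bounds $\mathbf{x}^\top\Lambda\mathbf{x}\le|\mathbf{x}|^\top\Lambda|\mathbf{x}|$).

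\textbf{Step 3: Assemble.}
Combining Steps 1 and 2 gives both displayed inequalities of the lemma, with $N_0$ independent of $N$ by Assumption~\ref{Assumption2}. I would cite \cite{ToselliM} for the coloring-to-spectral-radius lemma rather than reprove it in full, since it is a well-known building block of overlapping Schwarz theory, and simply indicate that the strengthened Cauchy--Schwarz inequality plus the finite-coloring property yield $\rho(\Lambda)\le N_0$ as above.
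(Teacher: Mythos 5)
The paper itself does not prove Lemma \ref{Lemma_strengthenedCauchySchwarzinequality}; it simply quotes it from \cite{ToselliM}. Your Step 1 (reduce the two summation bounds to $\rho(\Lambda)\le N_0$ via the strengthened Cauchy--Schwarz inequality and the Rayleigh quotient of the symmetric nonnegative matrix $\Lambda$) is fine, but the heart of your argument, Step 2, rests on a false claim: that for fixed $p$ and each color class $\mathcal{C}_j$ at most one $l\in\mathcal{C}_j$ has $\Omega_l'$ overlapping $\Omega_p'$. Assumption \ref{Assumption2} only says that subdomains of the same color are pairwise disjoint; it says nothing about how many of them meet a given $\Omega_p'$. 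In a standard two-dimensional overlapping decomposition a subdomain has up to eight neighbours while a valid coloring may use four colors, so two same-colored neighbours can both overlap $\Omega_p'$; more drastically, one large subdomain overlapping $n$ pairwise disjoint small ones admits a $2$-coloring, yet the row sum $\sum_l\eta_{pl}$ for the large subdomain is of order $n$, since whenever an overlap contains the support of a fine nodal basis function the sharp constant is $\eta_{pl}=1$. So your bound $\sum_{l\in\mathcal{C}_j}\eta_{pl}\le 1$, and hence $\sum_l\eta_{pl}\le N_0$, does not follow; the row-sum argument only yields a bound by the number of neighbours plus one. In fact, with the sharp constants, $\rho(\Lambda)\le N_0$ cannot be deduced from the coloring assumption alone (crossed horizontal and vertical strips give $N_0=2$ but $\rho(\Lambda)=n+1$), so the detour through the spectral radius cannot be repaired without an additional geometric hypothesis (e.g.\ a bounded number of overlapping neighbours, which does hold for the decompositions actually used in this paper but is not what you invoked). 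Your preliminary Gershgorin/row-block sketch in Step 2 has the same defect.

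Note also that the displayed inequalities --- which are all the paper ever uses --- have a short direct proof that bypasses $\rho(\Lambda)$ entirely: set $w_j=\sum_{l\in\mathcal{C}_j}v^{(l)}$; disjointness of same-colored supports gives $b$- and $a$-orthogonality within each color, so $\|w_j\|_b^2=\sum_{l\in\mathcal{C}_j}\|v^{(l)}\|_b^2$ (and likewise for $\|\cdot\|_a$), and then $\sum_{p,l=1}^N b(v^{(p)},v^{(l)})=\|\sum_{j=1}^{N_0}w_j\|_b^2\le N_0\sum_{j=1}^{N_0}\|w_j\|_b^2=N_0\sum_{l=1}^N\|v^{(l)}\|_b^2$ by the elementary estimate $\|\sum_{j=1}^{N_0}w_j\|^2\le N_0\sum_{j=1}^{N_0}\|w_j\|^2$, and identically for $a(\cdot,\cdot)$. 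That color-grouping argument (or a plain citation of \cite{ToselliM}, which is what the paper does) is the way to go; deriving the sums from a prior bound on $\rho(\Lambda)$ is the wrong order here.
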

\par The following result holds in $H^{1}(\Omega_{l}^{'})$ (see \cite{ToselliM}).
\begin{lemma}\label{Lemma_SmallOvelapping}
It holds that
\begin{equation}\notag
||u||_{L^{2}(\Omega_{l,\delta_{l}})}^{2}\leq C\delta_{l}^{2}\{(1+\frac{H_{l}}{\delta_{l}})|u|_{H^{1}(\Omega_{l}^{'})}^{2}+\frac{1}{H_{l}\delta_{l}}||u||^{2}_{L^{2}(\Omega_{l}^{'})}\}\ \ \ \forall\ u\in H^{1}(\Omega_{l}^{'}),\ \ l=1,2,...,N.
\end{equation}
\end{lemma}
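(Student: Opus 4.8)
The plan is to obtain this estimate by combining two classical ingredients: a Poincar\'e-type inequality \emph{across} the thin layer $\Omega_{l,\delta_l}$, which produces a trace term on the outer boundary $\Gamma_l:=\partial\Omega_l'\setminus\partial\Omega$, and then the scaled trace inequality on the whole subdomain $\Omega_l'$, which converts that trace term back into bulk norms over $\Omega_l'$. Throughout I abbreviate $H:=H_l$ and $\delta:=\delta_l$.

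First I would establish the strip estimate
\begin{equation}\notag
\|u\|_{L^2(\Omega_{l,\delta_l})}^2 \le C\delta^2\,|u|_{H^1(\Omega_{l,\delta_l})}^2 + C\delta\,\|u\|_{L^2(\Gamma_l)}^2,\qquad u\in H^1(\Omega_l').
\end{equation}
The idea is that every $x\in\Omega_{l,\delta_l}$ lies within distance $\delta$ of $\Gamma_l$: parametrizing the layer by a transverse coordinate $t\in(0,\delta)$ and a coordinate along $\Gamma_l$, one writes $u(x)=u(\hat x)+\int_0^{t}\partial_s u\,ds$ with $\hat x\in\Gamma_l$ the foot of $x$, so that $|u(x)|^2\le 2|u(\hat x)|^2+2\delta\int_0^\delta|\partial_s u|^2\,ds$; integrating this over the layer (first in $t$, then along $\Gamma_l$) gives the claim. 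Since $\partial\Omega_l'$ does not cut through any fine element and $\mathcal{J}_H$ is shape regular, this layer is a finite union of uniformly Lipschitz pieces, so the parametrization has Jacobians bounded above and below independently of $l,H,\delta$, and a finite overlapping cover reduces the estimate to model slabs $(0,\delta)\times(0,H)$ on which the computation above is immediate.

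Next I would invoke the trace theorem on $\Omega_l'$ in scaled form: mapping $\Omega_l'$ to a reference domain of unit diameter and using shape regularity to keep the constant uniform, one gets $\|u\|_{L^2(\Gamma_l)}^2\le\|u\|_{L^2(\partial\Omega_l')}^2\le C\bigl(H^{-1}\|u\|_{L^2(\Omega_l')}^2+H\,|u|_{H^1(\Omega_l')}^2\bigr)$. Substituting this into the strip estimate, using $|u|_{H^1(\Omega_{l,\delta_l})}\le|u|_{H^1(\Omega_l')}$ since $\Omega_{l,\delta_l}\subset\Omega_l'$, and collecting terms yields
\begin{equation}\notag
\|u\|_{L^2(\Omega_{l,\delta_l})}^2\le C\delta^2\Bigl(1+\tfrac{H}{\delta}\Bigr)|u|_{H^1(\Omega_l')}^2 + C\tfrac{\delta}{H}\|u\|_{L^2(\Omega_l')}^2,
\end{equation}
and rewriting $\delta/H=\delta^2/(H\delta)$ gives exactly the stated bound.

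The main obstacle is the first step: making rigorous that the boundary layer $\Omega_{l,\delta_l}$ admits a transverse foliation with uniformly controlled geometry near the edges and corners of $\Omega_l'$. This is precisely where the hypotheses on the coarse mesh (shape regularity and quasi-uniformity of $\mathcal{J}_H$, and the fact that $\partial\Omega_l'$ is aligned with $\mathcal{J}_h$) enter, and it is the reason the constant $C$ can be taken independent of $l$, $H$ and $\delta$; the remaining computations are elementary. A detailed treatment of this geometric reduction can be found in \cite{ToselliM}.
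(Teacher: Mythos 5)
The paper does not actually prove this lemma: it is quoted from the domain decomposition literature (it is the small-overlap estimate of Toselli--Widlund, cited as \cite{ToselliM}), so there is no in-paper argument to compare against, and your proposal should be judged on its own. Your derivation is correct and the algebra closes: the layer inequality $\|u\|_{L^{2}(\Omega_{l,\delta_{l}})}^{2}\leq C\delta_{l}^{2}|u|_{H^{1}(\Omega_{l,\delta_{l}})}^{2}+C\delta_{l}\|u\|_{L^{2}(\Gamma_{l})}^{2}$ with $\Gamma_{l}=\partial\Omega_{l}'\setminus\partial\Omega$, combined with the $H_{l}$-scaled trace inequality on $\Omega_{l}'$, gives $C(\delta_{l}^{2}+\delta_{l}H_{l})|u|_{H^{1}(\Omega_{l}')}^{2}+C\frac{\delta_{l}}{H_{l}}\|u\|_{L^{2}(\Omega_{l}')}^{2}$, which is exactly the stated bound since $\delta_{l}^{2}(1+\frac{H_{l}}{\delta_{l}})=\delta_{l}^{2}+\delta_{l}H_{l}$ and $\delta_{l}^{2}\cdot\frac{1}{H_{l}\delta_{l}}=\frac{\delta_{l}}{H_{l}}$. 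The standard textbook proof is organized differently: it argues patchwise, covering $\Omega_{l,\delta_{l}}$ by shape-regular pieces of diameter $O(\delta_{l})$ and summing local Poincar\'e/trace estimates, which avoids any global trace inequality on $\Omega_{l}'$. Your trace-based shortcut is cleaner but shifts the geometric burden to two uniformity statements: the transverse parametrization of the layer with Jacobians bounded independently of $l$, $H$, $\delta$ (which you correctly flag, and which is unproblematic for polygonal boundaries made of fine-element faces under shape regularity), and, in addition, the uniform Lipschitz character of the enlarged subdomains $\Omega_{l}'$, without which the constant in $\|u\|_{L^{2}(\partial\Omega_{l}')}^{2}\leq C(H_{l}^{-1}\|u\|_{L^{2}(\Omega_{l}')}^{2}+H_{l}|u|_{H^{1}(\Omega_{l}')}^{2})$ need not be independent of $l$ and of the number of added element layers. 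That second uniformity is a standard, usually implicit, hypothesis on overlapping decompositions of this type, so your argument stands; it would only strengthen the write-up to state it explicitly rather than fold it silently into shape regularity of $\mathcal{J}_{H}$.
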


\section{The two-level BPJD method}
\par In this section, we present our two-level BPJD method and some remarks about our algorithm.
\par In order to present our new preconditioner, we denote by $Q^{H}:V^{h}\to V^{H}$, $Q^{(l)}:V^{h}\to V^{(l)}\ (l=1,2,...,N)$ $b(\cdot,\cdot)$-orthogonal projectors. We also define $A^{(l)}: V^{(l)}\to V^{(l)}$ such that $b(A^{(l)}v,w)=a(v,w)$ for all $v,w\in V^{(l)},$ and $A^{H}:V^{H}\to V^{H}$ such that $b(A^{H}v,w)=a(v,w)$ for all $v,w\in V^{H}$. For convenience, denote by $B_{0,i}^{k}:=A^{H}-\lambda_{i}^{k}$ and $B_{l,i}^{k}:=A^{(l)}-\lambda_{i}^{k}\ (i=1,2,...,s,\ l=1,2,...,N)$,
where $\lambda_{i}^{k}$ represents the $k$th iterative approximation of the $i$th discrete eigenvalue $\lambda_{i}^{h}$ in Algorithm 3.1. By using a scaling argument, it is easy to check that
\begin{equation}\label{EigenvalueLocal}
\lambda_{min}(B_{l,i}^{k})=O(H^{-2m}),\ \ \ \lambda_{max}(B_{l,i}^{k})=O(h^{-2m}),\ \ m=1,2,\ i=1,2,...,s,\ l=1,2,...,N.
\end{equation}

Corresponding to \eqref{VhDecomposition}, there is a spectral decomposition on the coarse space $V^{H}$ ($s<\dim(V^{H})$):
\begin{equation}\notag
V^{H}=U_{s}^{H}\oplus U_{s+1}^{H}=V_{1}^{H}\oplus V_{2}^{H}\oplus ... \oplus V_{s}^{H}\oplus U_{s+1}^{H},
\end{equation}
where $V_{i}^{H}={\rm span} \{u_{i}^{H}\}$, $u_{i}^{H}$ is the $i$th discrete eigenvector of $A^{H}$, $\ U_{s}^{H}=V_{1}^{H}\oplus V_{2}^{H}\oplus ... \oplus V_{s}^{H}$, $\oplus$ denotes the orthogonal direct sum with respect to $b(\cdot,\cdot)$ (also $a(\cdot,\cdot)$), and $U_{s+1}^{H}$ denotes the orthogonal complement of $U_{s}^{H}$.
Furthermore,
\begin{equation}\label{EigenvalueCoarse}
\lambda_{min}(B_{0,i}^{k}|_{U_{s+1}^{H}})= \lambda_{s+1}^{H}-\lambda_{i}^{k},\ \ \ \lambda_{max}(B_{0,i}^{k})=O(H^{-2m}),\ \ m=1,2, \ i=1,2,...,s.
\end{equation}
We also denote by $Q_{s}^{H}: V^{H}\to U_{s}^{H},\ Q_{s+1}^{H}:V^{H}\to U_{s+1}^{H}$ and $Q_{i,s}^{H}: V^{H}\to V_{i}^{H}$ $b(\cdot,\cdot)$-orthogonal (also $a(\cdot,\cdot)$-orthogonal ) projectors. The core of our two-level BPJD method is to design  parallel preconditioners defined as
\begin{equation}\label{Preconditioner}
(B_{i}^{k})^{-1}=(B_{0,i}^{k})^{-1}Q_{s+1}^{H}Q^{H}+\sum_{l=1}^{N}(B_{l,i}^{k})^{-1}Q^{(l)},
\end{equation}
to solve the block-version Jacobi-Davidson correction equations:
\begin{equation}\label{Equation_JacobiDavidsonCorrtion}
\begin{cases}
\text{Find $t_{i}^{k+1}\in (U^{k})^{\perp},\ i=1,2,...,s,$  such that}\\
b((A^{h}-\lambda_{i}^{k})t_{i}^{k+1},v)=b(r_{i}^{k},v),\ \ \ \ v\in (U^{k})^{\perp},
\end{cases}
\end{equation}
where $U^{k}={\rm span}\{u_{i}^{k}\}_{i=1}^{s}$, $u_{i}^{k}$ is the iterative approximation of $u_{i}^{h}$, and $r_{i}^{k}=\lambda_{i}^{k}u_{i}^{k}-A^{h}u_{i}^{k},\ i=1,2,...,s$.
\begin{table}[H]
\centering
\begin{tabular}{p{15cm}}
\hline
\hline
\textbf{Algorithm 3.1} Two-Level BPJD Algorithm \\
\hline
$\bm{{\rm Step}\ 1}$ Solve the following coarse eigenvalue problem:
\[A^{\widetilde{H}}u_{i}^{\widetilde{H}}=\lambda_{i}^{\widetilde{H}}u_{i}^{\widetilde{H}},\ \ \ b(u_{i}^{\widetilde{H}},u_{j}^{\widetilde{H}})=\delta_{ij},\ \ \ \ i,j=1,2,...,s,\ s<\dim(V^{H}), \]
\ \ \ \ \ \ \ \ \ \ \ \ such that $\lambda_{i}^{\widetilde{H}}<\lambda_{i}^{H}$. Set $u_{i}^{0}=u_{i}^{\widetilde{H}}$, $\lambda_{i}^{0}=Rq(u^{0}_{i})$, $W^{0}=U^{0}=\text{span}\{u_{i}^{0}\}_{i=1}^{s},\ \Lambda^{0}=\{\lambda_{i}^{0}\}_{i=1}^{s}$.\\
$\bm{{\rm Step\ 2}}$ For $k=0, 1, 2, ...,$ solve \eqref{Equation_JacobiDavidsonCorrtion} inexactly through solving some parallel preconditioned\\ \ \ \ \ \ \ \ \ \ \ \ systems:
\begin{equation}\label{Preconditioneri}
t_{i}^{k+1}=(I-Q_{U^{k}})(B_{i}^{k})^{-1}r_{i}^{k}=(B_{i}^{k})^{-1}r_{i}^{k}-\sum_{i=1}^{s}b((B_{i}^{k})^{-1}r_{i}^{k},u_{i}^{k})u_{i}^{k},\ i=1,2,...,s,
\end{equation}
\ \ \ \ \ \ \ \ \ \ \ where $(B_{i}^{k})^{-1}$ is defined in \eqref{Preconditioner}.

$\bm{{\rm Step\ 3}}$ Solve the first $s$ eigenpairs in $W^{k+1}$:\\
\begin{equation}\label{UpdateRayleighRitz}
a(u_{i}^{k+1},v)=\lambda_{i}^{k+1}b(u_{i}^{k+1},v)\ \ \forall\ v\in W^{k+1},\ b(u_{i}^{k+1},u_{j}^{k+1})=\delta_{ij},
\end{equation}
\ \ \ \ \ \ \ \ \ \ \ where $\ i,j=1,2,...,s$, $W^{k+1}=W^{k}+\text{span}\{t_{i}^{k+1}\}_{i=1}^{s}$.\\
\ \ \ \ \ \ \ \ \ \ \ Set $U^{k+1}=\text{span}\{u_{i}^{k+1}\}_{i=1}^{s},\ \Lambda^{k+1}=\{\lambda_{i}^{k+1}\}_{i=1}^{s}.$\\
$\bm{{\rm Step\ 4}}$ If $\sum_{i=1}^{s}|\lambda_{i}^{k+1}-\lambda_{i}^{k}|<tol$, return $(\Lambda^{k+1},U^{k+1})$. Otherwise, goto $\bm{step\ 2}$.\\
\hline
\hline
\end{tabular}
\end{table}

\begin{remark}
Actually, the choice of $W^{k+1}$ may be different. We may choose
\begin{equation}\notag
W^{k+1}=V^{k}+{\rm span}\{t_{i}^{k+1}\}_{i=1}^{s},
\end{equation}
where $V^{k}$ is a smaller subspace satisfying $U^{k}\subset V^{k}$. For example, we may take $V^{k}:=U^{k}$ or $V^{k}:=U^{k-1}+U^{k}$. The advantage of these two is that $\dim{W^{k+1}}$ is independent of $k$, which may reduce the cost for solving the approximate eigenpairs in $W^{k+1}$.
\end{remark}

\begin{remark}
The meaning of the `block' in two-level BPJD method is understood as follows: Let $\sigma^{d}(v^{h})$ ($1\leq d\leq nd$) be the $d$th coordinate of $v^{h}\ (v^{h}\in V^{h})$ corresponding to the finite element basis. The operator $I-Q_{U^{k}}$ is represented through matrix form
$I_{nd\times nd}-X^{k}(X^{k})^{t}M$, where $M$ is the mass matrix corresponding to the finite element basis, the matrix $X^{k}$ is
\begin{gather*}
\begin{pmatrix}
\sigma^{1}(u_{1}^{k}) & \sigma^{1}(u_{2}^{k}) & ... & \sigma^{1}(u_{s}^{k})\\
\sigma^{2}(u_{1}^{k}) & \sigma^{2}(u_{2}^{k}) & ... & \sigma^{2}(u_{s}^{k})\\
...&...&...&...\\
\sigma^{nd}(u_{1}^{k}) & \sigma^{nd}(u_{2}^{k}) & ... & \sigma^{nd}(u_{s}^{k})\\
\end{pmatrix}_{nd\times s},
\end{gather*}
and $(X^{k})^{t}$ is the transpose of $X^{k}$. In particular, for $s=1$, the matrix version of the operator $I-Q_{u_{1}^{k}}$ is $I_{nd\times nd}-\sigma(u_{1}^{k})(\sigma(u_{1}^{k}))^{t}M$, where $\sigma(u_{1}^{k})$ is the transpose of $(\sigma^{1}(u_{1}^{k}),\sigma^{2}(u_{1}^{k}),...,\sigma^{nd}(u_{1}^{k}))$. So the block-version Jacobi-Davidson correction equations \eqref{Equation_JacobiDavidsonCorrtion} are solved at the same time by \eqref{Preconditioneri} for $i=1,2,...,s$.
\end{remark}

\begin{remark}
The purpose of Step 1 is to give an initial approximation for the proposed iteration algorithm. The condition $\lambda_{i}^{\widetilde{H}}<\lambda_{i}^{H}$, which can be achieved through $\widetilde{H}=\frac{H}{2^{\tau}}$ for any positive integer $\tau$, is to ensure that $(B_{0,i}^{0})^{-1}$ is well-defined in theoretical analysis. But we find that it is not necessary in practical computation.
\end{remark}
\begin{remark}\label{Remark_Functional}
For $s=1$, our algorithm may be regarded as a parallel preconditioned solver which solves
\begin{equation}\notag
u_{1}^{h}=\arg \min_{v\ne0, v\in V^{h}}\frac{a(v,v)}{b(v,v)}=\arg \min_{v\ne0, {\rm span}\{v\}\subset V^{h}} {\rm Tr} (Q_{v}A^{h}Q_{v}).
\end{equation}
For $s\geq 2$, our algorithm may be seen as a  parallel preconditioned solver which solves
\begin{equation}\notag
U_{s}^{h}=\arg \min_{U\subset V^{h}, \dim(U)=s}{\rm Tr} (Q_{U}A^{h}Q_{U}).
\end{equation}
\end{remark}
\par We may consider a functional $\mathcal{G}:\Sigma_{V_{h}}\to \mathcal{R}$ such that
$ \mathcal{G}(U)={\rm Tr}(Q_{U}A^{h}Q_{U})$ for all $U\in \Sigma_{V_{h}}$,
where $\Sigma_{V_{h}}$ includes all closed subspaces of $V^{h}$. Hence, we need to minimize the functional $\mathcal{G}(\cdot)$ in $\Xi_{V_{h}}:=\{U\in \Sigma_{V^{h}}\ |\ \dim{(U)}=s\}$ to obtain $(\{\lambda_{i}^{h}\}_{i=1}^{s},U_{s}^{h})$.

\section{Some properties of subspace method}
\par In this section, we present some useful properties in convergence analysis.
\par Since
$$U^{k}+{\rm span}\{t_{i}^{k+1}\}_{i=1}^{s}\subset W^{k}+\text{span}\{t_{i}^{k+1}\}_{i=1}^{s}=W^{k+1}\subset V^{h},$$ by the Courant-Fischer principle, we obtain
\begin{equation}\label{Lemma_EigenvalueDescent}
\lambda_{i}^{h}\leq \lambda_{i}^{k+1}\leq \lambda_{i}^{k},\ \ \ \ k=0,1,2,...,\ i=1,2,...,s.
\end{equation}
By Step 1 in Algorithm 3.1, we know that
$\lambda_{i}^{0}=\lambda_{i}^{\widetilde{H}}< \lambda_{i}^{H},\ i=1,2,...,s.$
Using \eqref{Lemma_EigenvalueDescent} and Assumption 1, we have
\begin{equation}\label{equation_EigenvalueRelation}
\lambda_{i}\leq \lambda_{i}^{h}\leq \lambda_{i}^{k}\leq \lambda_{i}^{0}=\lambda_{i}^{\widetilde{H}}<\lambda_{i}^{H}<\lambda_{s+1}^{h}\leq \lambda_{s+1}^{H},\ \ \ k=0,1,2,...,\ i=1,2,...,s.
\end{equation}
\par For our theoretical analysis, we define $(v,w)_{E_{i}^{k}}:=a(v,w)-\lambda^{k}_{i}b(v,w)$, $(v,w)_{E_{i}^{h}}:=a(v,w)-\lambda^{h}_{i}b(v,w)$ and $(v,w)_{E_{i}}:=a(v,w)-\lambda_{i}b(v,w)$ for all $v,w\in V^{h}$.
It is easy to check that the bilinear forms $(\cdot,\cdot)_{E_{i}^{k}}$, $(\cdot,\cdot)_{E_{i}^{h}}$ and $(\cdot,\cdot)_{E_{i}}$ construct inner products in $U_{s+1}^{h}$. The norms $||\cdot||_{E_{i}^{k}}$, $||\cdot||_{E_{i}^{h}}$ and $||\cdot||_{E_{i}}$ induced by $(\cdot,\cdot)_{E_{i}^{k}}$, $(\cdot,\cdot)_{E_{i}^{h}}$ and $(\cdot,\cdot)_{E_{i}}$, respectively, are equivalent to the norm $||\cdot||_{a}$ over $U_{s+1}^{h}$. In fact, on one hand, $(w,w)_{E_{i}^{k}}\leq (w,w)_{E_{i}^{h}}\leq (w,w)_{E_{i}} \leq a(w,w)$ for all $ w\in V^{h}.$ On the other hand, by \eqref{equation_EigenvalueRelation}, it is easy to get that $(v,v)_{E_{i}^{k}}\geq (\lambda_{s+1}^{h}-\lambda_{i}^{k})b(v,v)$ for all $v\in U_{s+1}^{h}$.
Moreover,
\begin{align*}
a(v,v)
=(v,v)_{E_{i}^{k}}+\lambda_{i}^{k}b(v,v)
\leq \beta(\lambda_{s+1}^{h})(v,v)_{E_{i}^{k}}\ \ \ \ \forall\ v\in U_{s+1}^{h},
\end{align*}
where the real valued function $\beta(\lambda):=1+\frac{\lambda_{i}^{k}}{\lambda-\lambda_{i}^{k}}$. Throughout the paper, we denote by $\mu_{i}^{k}:=(\lambda_{i}^{k})^{-1}$, $g_{i}^{k}:=\mu_{i}^{k}u_{i}^{k}-T^{h}u_{i}^{k}$, $Q_{\perp}^{k}:=I-Q_{U^{k}}$,  $P_{\perp}^{k}:=I-P_{U^k}$ and $e_{i,s+1}^{k}:=-Q_{s+1}^{h}u_{i}^{k}$, where $Q_{U^{k}}: V^{h}\to U^{k}$ denotes a $b(\cdot,\cdot)$-orthogonal projector and $P_{U^{k}}:V^{h}\to U^{k}$ denotes an $a(\cdot,\cdot)$-orthogonal projector.
\begin{lemma}\label{Lemma_NormQs+1h}
It holds that
$$||Q_{U^{k}}v||^{2}_{a}\leq \frac{1}{\lambda_{1}}\sum_{i=1}^{s}\lambda_{i}^{k}||v||_{a}^{2},\ \ \ ||Q_{\perp}^{k}v||^{2}_{a}\leq (2+\frac{2}{\lambda_{1}}\sum_{i=1}^{s}\lambda_{i}^{k})||v||_{a}^{2}\ \ \ \ \  \forall\ v\in V^{h}.$$
\end{lemma}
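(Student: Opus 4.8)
The plan is to estimate the two operator norms $\interleave Q_{U^{k}}\interleave_{a}$ and $\interleave Q_{\perp}^{k}\interleave_{a}$ separately, the second one following from the first via the triangle inequality since $Q_{\perp}^{k}=I-Q_{U^{k}}$.

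For the first inequality, I would start from the $b(\cdot,\cdot)$-orthogonal decomposition $Q_{U^{k}}v=\sum_{i=1}^{s}b(v,u_{i}^{k})u_{i}^{k}$, where $\{u_{i}^{k}\}_{i=1}^{s}$ is the $b(\cdot,\cdot)$-orthonormal Ritz basis of $U^{k}$ computed in Step 3 of Algorithm 3.1, which also diagonalizes $a(\cdot,\cdot)$ on $U^{k}$ with $a(u_{i}^{k},u_{j}^{k})=\lambda_{i}^{k}\delta_{ij}$. Then
\[
\|Q_{U^{k}}v\|_{a}^{2}=\sum_{i=1}^{s}\lambda_{i}^{k}\,b(v,u_{i}^{k})^{2}.
\]
The key step is to bound each coefficient $b(v,u_{i}^{k})^{2}$ by $\|v\|_{a}^{2}$ up to a constant. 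Since $a(\cdot,\cdot)$ is $V^{h}$-coercive with $a(w,w)\ge \lambda_{1}b(w,w)$ for all $w\in V^{h}$ (this is just $\lambda_{1}=\min_{w}Rq(w)$), the Cauchy–Schwarz inequality for $b(\cdot,\cdot)$ gives $b(v,u_{i}^{k})^{2}\le \|v\|_{b}^{2}\|u_{i}^{k}\|_{b}^{2}=\|v\|_{b}^{2}\le \tfrac{1}{\lambda_{1}}\|v\|_{a}^{2}$, using $\|u_{i}^{k}\|_{b}=1$. Summing over $i$ yields exactly $\|Q_{U^{k}}v\|_{a}^{2}\le \tfrac{1}{\lambda_{1}}\big(\sum_{i=1}^{s}\lambda_{i}^{k}\big)\|v\|_{a}^{2}$, which is the first claim. (One should double-check whether the intended reading of the right-hand side is $\tfrac{1}{\lambda_{1}}\big(\sum_{i}\lambda_{i}^{k}\big)\|v\|_{a}^{2}$ rather than $\sum_{i}\tfrac{\lambda_{i}^{k}}{\lambda_{1}}\|v\|_{a}^{2}$ — they coincide.)

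For the second inequality, write $Q_{\perp}^{k}v=v-Q_{U^{k}}v$ and use $\|a+b\|_{a}^{2}\le 2\|a\|_{a}^{2}+2\|b\|_{a}^{2}$, so that $\|Q_{\perp}^{k}v\|_{a}^{2}\le 2\|v\|_{a}^{2}+2\|Q_{U^{k}}v\|_{a}^{2}\le \big(2+\tfrac{2}{\lambda_{1}}\sum_{i=1}^{s}\lambda_{i}^{k}\big)\|v\|_{a}^{2}$, invoking the bound just proved. I expect no real obstacle here; the only point requiring a little care is to make sure the Ritz basis from \eqref{UpdateRayleighRitz} is used (so that $b$-orthonormality and $a$-diagonality on $U^{k}$ hold simultaneously), and that $\|u_{i}^{k}\|_{b}=1$ is quoted correctly from the normalization in Step 3. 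The coercivity constant $\lambda_{1}>0$ from \eqref{Variational} supplies the $\|\cdot\|_{b}\le \lambda_{1}^{-1/2}\|\cdot\|_{a}$ embedding that makes everything go through.
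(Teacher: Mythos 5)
Your proposal is correct and follows essentially the same route as the paper: expand $Q_{U^{k}}v$ in the $b$-orthonormal, $a$-diagonal Ritz basis, apply Cauchy--Schwarz together with $\|v\|_{b}^{2}\le\lambda_{1}^{-1}\|v\|_{a}^{2}$, and get the second bound from $Q_{\perp}^{k}=I-Q_{U^{k}}$ via the elementary inequality $\|w_1+w_2\|_{a}^{2}\le 2\|w_1\|_{a}^{2}+2\|w_2\|_{a}^{2}$. The paper merely phrases the first step via the rescaled $a$-orthonormal basis $\{\sqrt{\mu_{i}^{k}}u_{i}^{k}\}$ instead of your direct diagonal computation, which is a cosmetic difference only.
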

\begin{proof}
By the fact that $a(u_{i}^{k},u_{j}^{k})=\lambda_{i}^{k}b(u_{i}^{k},u_{j}^{k})=\lambda_{i}^{k}\delta_{ij}$, we know that $\{\sqrt{\mu_{i}^{k}}u_{i}^{k}\}_{i=1}^{s}$ forms a group of normal and orthogonal basis with respect to $a(\cdot,\cdot)$. For any $v\in V^{h}$, by the Cauchy-Schwarz inequality, we obtain
\begin{align*}
||Q_{U^{k}}v||_{a}^{2}
&=||\sum_{i=1}^{s}b(v,u_{i}^{k})u_{i}^{k}||_{a}^{2}
=||\sum_{i=1}^{s}b(v,\sqrt{\lambda_{i}^{k}}u_{i}^{k})\sqrt{\mu_{i}^{k}}u_{i}^{k}||_{a}^{2}
=\sum_{i=1}^{s}(b(v,\sqrt{\lambda_{i}^{k}}u_{i}^{k}))^{2}\\
&\leq ||v||_{b}^{2}\sum_{i=1}^{s}||\sqrt{\lambda_{i}^{k}}u_{i}^{k}||^{2}_{b}
\leq \frac{1}{\lambda_{1}}\sum_{i=1}^{s}\lambda_{i}^{k}||v||_{a}^{2}.
\end{align*}
Moreover,
$$
||Q_{\perp}^{k}v||_{a}^{2}
=||(I-Q_{U^{k}})v||_{a}^{2}
\leq 2\{||v||_{a}^{2}+||Q_{U^{k}}v||_{a}^{2}\}
\leq 2(1+\frac{1}{\lambda_{1}}\sum_{i=1}^{s}\lambda_{i}^{k})||v||_{a}^{2},
$$
which completes the proof of this lemma. \qed
\end{proof}

\begin{lemma}\label{Lemma_QUsQs1}
If Assumption \ref{Assumption1} holds, then for any $v^{H}\in V^{H}$, it holds that
\begin{equation}\label{Equation_QushQs1H}
||Q_{s}^{h}Q_{s+1}^{H}v^{H}||_{b}\leq CH^{2}||Q_{s+1}^{H}v^{H}||_{b},\ \ \ ||Q_{s}^{h}Q_{s+1}^{H}v^{H}||_{a}\leq CH||Q_{s+1}^{H}v^{H}||_{a}.
\end{equation}
\end{lemma}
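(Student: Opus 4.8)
The plan is to exploit the two a priori estimates \eqref{Equation_eigenvectorApriori1} and \eqref{Equation_eigenvectorApriori2}, which bound $\theta_a(U_s,U_s^h)$ and $\theta_b(U_s,U_s^h)$ by $Ch$ and $Ch^2$ respectively, applied at the \emph{coarse} level with $H$ in place of $h$. Indeed, $V^H$ is itself a conforming finite element space of mesh size $H$, so the same estimates give
\begin{equation}\notag
\theta_a(U_s,U_s^H)\leq CH,\qquad \theta_b(U_s,U_s^H)\leq CH^2.
\end{equation}
Since $\dim U_s = \dim U_s^h = \dim U_s^H = s < \infty$, Remark \ref{Remark_Hausdorff} lets us use the triangle inequality for $\sin\{\cdot;\cdot\}$ among these equal-dimension subspaces, yielding
\begin{equation}\notag
\sin_b\{U_s^H;U_s^h\}\leq \sin_b\{U_s^H;U_s\}+\sin_b\{U_s;U_s^h\}\leq CH^2,
\end{equation}
and similarly $\sin_a\{U_s^H;U_s^h\}\leq CH$. (Here one uses $h<H$, so the $Ch$, $Ch^2$ terms are absorbed into $CH$, $CH^2$.)

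The second step is to translate a gap bound between $U_s^H$ and $U_s^h$ into the stated bound on $Q_s^h Q_{s+1}^H v^H$. The key observation is that for any $w^H\in U_{s+1}^H$ we have $\sin_b\{U_{s+1}^H;U_{s+1}^h\} = \sin_b\{U_s^H;U_s^h\}$ when all four spaces have matching codimension/dimension pairs in $V^H$ versus $V^h$ — more directly, I would argue as follows. Write $w^H := Q_{s+1}^H v^H \in U_{s+1}^H$. Then $Q_s^h w^H$ is the $b$-orthogonal projection onto $U_s^h$, so $\|Q_s^h w^H\|_b = \inf_{z\in U_{s+1}^h}\|w^H - z\|_b \cdot(\text{not quite})$; the clean route is $\|Q_s^h w^H\|_b = \|w^H - Q_{s+1}^h w^H\|_b = \mathrm{dist}_b(w^H, U_{s+1}^h)$. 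Thus I need $\sin_b\{U_{s+1}^H; U_{s+1}^h\}\leq CH^2$. This follows because the $b$-gap between orthogonal complements of equal-dimension spaces equals the $b$-gap between the spaces themselves (a standard fact for orthogonal projectors: $\|Q_{s}^h - Q_s^H\|_b = \|Q_{s+1}^h - Q_{s+1}^H\|_b$, and each $\sin_b$ is dominated by the corresponding operator-norm difference). Hence
\begin{equation}\notag
\|Q_s^h Q_{s+1}^H v^H\|_b = \mathrm{dist}_b(w^H,U_{s+1}^h) \leq \sin_b\{U_{s+1}^H;U_{s+1}^h\}\,\|w^H\|_b \leq CH^2\|Q_{s+1}^H v^H\|_b.
\end{equation}

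For the $a$-norm inequality the argument is identical, replacing $b(\cdot,\cdot)$ by $a(\cdot,\cdot)$: since $U_s^h$, $U_{s+1}^h$ are also $a$-orthogonal (eigenvector spaces are orthogonal in both inner products), $Q_s^h$ is simultaneously the $a$- and $b$-orthogonal projector onto $U_s^h$, and the same computation gives $\|Q_s^h Q_{s+1}^H v^H\|_a \leq \sin_a\{U_{s+1}^H;U_{s+1}^h\}\|Q_{s+1}^H v^H\|_a \leq CH\|Q_{s+1}^H v^H\|_a$, using $\theta_a(U_s,U_s^H)\leq CH$ from the coarse-level version of \eqref{Equation_eigenvectorApriori1}.

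The main obstacle is the passage in the second step from a statement about the gap between $U_s^H$ and $U_s^h$ to the gap between their complements $U_{s+1}^H$ and $U_{s+1}^h$, and the careful verification that the distance from $w^H$ to $U_{s+1}^h$ is controlled by this complement-gap. One must be slightly careful because $U_{s+1}^H$ is infinite-codimensional-looking but in fact $U_{s+1}^H$ lies in $V^H$ which has dimension larger than $s$, while $U_{s+1}^h$ has much larger dimension; the equal-dimension hypothesis of Remark \ref{Remark_Hausdorff} applies to $U_s^H, U_s^h$ (both dimension $s$), not to the complements, so the complement-gap identity must be justified via the projector-norm identity $\interleave Q_s^h - Q_s^H\interleave = \interleave Q_{s+1}^h - Q_{s+1}^H\interleave$ rather than by a direct triangle inequality on the complements. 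Once this identity is in hand — it is elementary, $Q_{s+1}^\bullet = I - Q_s^\bullet$ — the rest is a short computation, and the constants $C$ inherit the independence from $h$, internal gaps, and (here trivially) from $H$, $\delta$ asserted in Theorem \ref{TheoremPriorLaplacian}, now depending only on $d_s^+$.
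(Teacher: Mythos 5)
Your first step (applying Theorem \ref{TheoremPriorLaplacian} at the coarse level and using the triangle inequality of Remark \ref{Remark_Hausdorff} through $U_{s}$ to get $\sin_{b}\{U_{s}^{H};U_{s}^{h}\}\leq CH^{2}$, $\sin_{a}\{U_{s}^{H};U_{s}^{h}\}\leq CH$) is exactly what the paper does, and the reduction $\|Q_{s}^{h}w^{H}\|_{b}={\rm dist}_{b}(w^{H},U_{s+1}^{h})$ for $w^{H}=Q_{s+1}^{H}v^{H}$ is fine. The genuine gap is in how you pass to the complements. The identity you invoke, $Q_{s+1}^{\bullet}=I-Q_{s}^{\bullet}$ and hence $\interleave Q_{s}^{h}-Q_{s}^{H}\interleave_{b}=\interleave Q_{s+1}^{h}-Q_{s+1}^{H}\interleave_{b}$, presupposes that both complements are taken inside one ambient space. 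Here $U_{s+1}^{H}$ is the complement of $U_{s}^{H}$ inside $V^{H}$, while $U_{s+1}^{h}$ is the complement of $U_{s}^{h}$ inside $V^{h}$, and $Q_{s}^{H},Q_{s+1}^{H}$ are only defined on $V^{H}$; if you extend the coarse projector to $V^{h}$ (call $P$ the $b$-orthogonal projector of $V^{h}$ onto $U_{s}^{H}$), then $I-P$ projects onto $(U_{s}^{H})^{\perp}\cap V^{h}$, which strictly contains $U_{s+1}^{H}$, so the complement-projector identity does not give what you want. Moreover the symmetric statement ``gap between complements equals gap between the spaces'' is false in this setting: whenever $\dim V^{h}>\dim V^{H}+s$ one can pick $u\in U_{s+1}^{h}$ that is $b$-orthogonal to all of $V^{H}$, so $\sin_{b}\{U_{s+1}^{h};U_{s+1}^{H}\}=1$; only the one-sided quantity $\sin_{b}\{U_{s+1}^{H};U_{s+1}^{h}\}$ is small, and your cited identity cannot by itself establish a one-sided bound.

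The bound you need is nevertheless true, and the repair is one line: for $w^{H}\in U_{s+1}^{H}$ we have $Pw^{H}=0$ because $w^{H}\perp_{b}U_{s}^{H}$, hence $\|Q_{s}^{h}w^{H}\|_{b}=\|(Q_{s}^{h}-P)w^{H}\|_{b}\leq\interleave Q_{s}^{h}-P\interleave_{b}\,\|w^{H}\|_{b}=\theta_{b}(U_{s}^{h},U_{s}^{H})\|w^{H}\|_{b}\leq CH^{2}\|w^{H}\|_{b}$, using the standard projector--gap identity in Hilbert space (see \cite{Kato}) for the equal-dimensional pair $U_{s}^{h},U_{s}^{H}$, to which Remark \ref{Remark_Hausdorff} does apply. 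Alternatively you can avoid operator identities altogether, which is what the paper does: since $b(z,w^{H})=0$ for every $z\in U_{s}^{H}$ and $Q_{s}^{h}$ is $b$-orthogonal, $\|Q_{s}^{h}w^{H}\|_{b}^{2}=b(Q_{s}^{h}w^{H}-z,w^{H})\leq\|Q_{s}^{h}w^{H}-z\|_{b}\|w^{H}\|_{b}$, and choosing $z$ a suitably scaled near-best approximation of $Q_{s}^{h}w^{H}$ from $U_{s}^{H}$ (available because $\sin_{b}\{U_{s}^{h};U_{s}^{H}\}\leq CH^{2}$) gives the claim after dividing by $\|Q_{s}^{h}w^{H}\|_{b}$; the $a$-norm case is identical with $CH$, as you say, since the projectors are simultaneously $a$- and $b$-orthogonal. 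With this correction your argument is sound and is essentially the paper's proof phrased through projector norms.
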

\begin{proof}
By Remark \ref{Remark_Hausdorff}, Theorem \ref{TheoremPriorLaplacian} and the fact $\dim(U_{s}^{h})=\dim(U_{s}^{H})=\dim(U_{s})$, we have
\begin{equation}\label{Equation_sinb}
\sin_{b}\{U_{s}^{h},U_{s}^{H}\}\leq \sin_{b}\{U_{s}^{h},U_{s}\}+\sin_{b}\{U_{s},U_{s}^{H}\}\leq CH^{2}.
\end{equation}
If $Q_{s}^{h}Q_{s+1}^{H}v^{H}=0$, \eqref{Equation_QushQs1H} holds.
For $Q_{s}^{h}Q_{s+1}^{H}v^{H}(\ne 0)\in U_{s}^{h}$, take $\widetilde{w}_{s}^{h}=\frac{Q_{s}^{h}Q_{s+1}^{H}v^{H}}{||Q_{s}^{h}Q_{s+1}^{H}v^{H}||_{b}}$. By \eqref{Equation_sinb}, there exists a $v_{s}^{H}(\ne0)\in U_{s}^{H}$ such that
\begin{equation}\notag
||\widetilde{w}_{s}^{h}-v_{s}^{H}||_{b}\leq CH^{2}.
\end{equation}
Then we have,
\begin{equation}\notag
||\widetilde{w}_{s}^{h}-\frac{v_{s}^{H}}{||v_{s}^{H}||_{b}}||_{b}=
||\frac{\widetilde{w}_{s}^{h}}{||\widetilde{w}_{s}^{h}||_{b}}-\frac{v_{s}^{H}}{||v_{s}^{H}||_{b}}||_{b}\leq \frac{2}{||\widetilde{w}_{s}^{h}||_{b}}||\widetilde{w}_{s}^{h}-v_{s}^{H}||_{b}\leq CH^{2}.
\end{equation}
Taking $w_{s}^{H}=\frac{||Q_{s}^{h}Q_{s+1}^{H}v^{H}||_{b}}{||v_{s}^{H}||_{b}}v_{s}^{H}\in U_{s}^{H}$, we get
$$||Q_{s}^{h}Q_{s+1}^{H}v^{H}-w_{s}^{H}||_{b}\leq CH^{2}||Q_{s}^{h}Q_{s+1}^{H}v^{H}||_{b}.$$
Moreover,
\begin{align*}
||Q_{s}^{h}Q_{s+1}^{H}v^{H}||^{2}_{b}
=b(Q_{s}^{h}Q_{s+1}^{H}v^{H}-w_{s}^{H},Q_{s+1}^{H}v^{H})
\leq CH^{2}||Q_{s}^{h}Q_{s+1}^{H}v^{H}||_{b}||Q_{s+1}^{H}v^{H}||_{b},
\end{align*}
which yields the first inequality in \eqref{Equation_QushQs1H}.
Similarly, we may prove that $||Q_{s}^{h}Q_{s+1}^{H}v^{H}||_{a}\leq CH||Q_{s+1}^{H}v^{H}||_{a}$, and then obtain the proof of this lemma. \qed
\end{proof}

\par By the analysis above, for any $v^{H}\in V^{H}$, we could easily estimate $||Q_{s+1}^{h}Q_{s}^{H}v^{H}||_{b}$ and $||Q_{s+1}^{h}Q_{s}^{H}v^{H}||_{a}$, similarly. For any $v^{h}\in V^{h}$, we have estimate results as follows
\begin{equation}\label{Equation_QsHQHQs1hvh}
||Q_{s}^{H}Q^{H}Q_{s+1}^{h}v^{h}||_{b}\leq CH^{2}||Q_{s+1}^{h}v^{h}||_{b},
\end{equation}
and
\begin{equation}\label{Equation_Qs1HQHQshvh}
||Q_{s+1}^{H}Q^{H}Q_{s}^{h}v^{h}||_{b}\leq CH^{2}||Q_{s}^{h}v^{h}||_{b}.
\end{equation}

\begin{lemma}\label{Lemma_gik}
Let $a(u_{i}^{k},u_{j}^{k})=\lambda_{i}^{k}b(u_{i}^{k},u_{j}^{k})=\lambda_{i}^{k}\delta_{ij}$ and Assumption \ref{Assumption1} hold. It holds that
\begin{equation}\label{gik}
\sum_{i=1}^{s}a(g_{i}^{k},g_{i}^{k}) \leq \sum_{i=1}^{s}(\mu_{i}^{h}-\mu_{i}^{k}).
\end{equation}
In particular,
$\sum_{i=1}^{s}||g_{i}^{k}||_{a}^{2}\leq CH^{2}.$
\end{lemma}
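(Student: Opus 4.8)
The plan is to interpret $g_i^k$ spectrally and reduce the bound to the a priori estimate $\theta_b(U_s,U_s^h)\le Ch^2$ combined with the eigenvalue descent property. Recall $g_i^k=\mu_i^k u_i^k-T^h u_i^k$, and that $T^h$ is self-adjoint on $V^h$ with eigenpairs $(\mu_j^h,u_j^h)$. First I would expand $u_i^k$ in the orthonormal (with respect to $b(\cdot,\cdot)$) eigenbasis $\{u_j^h\}$ of $V^h$, say $u_i^k=\sum_j c_{ij}u_j^h$ with $\sum_j c_{ij}^2=1$. Then $g_i^k=\sum_j (\mu_i^k-\mu_j^h)c_{ij}u_j^h$, so
\begin{equation}\notag
a(g_i^k,g_i^k)=\sum_j \lambda_j^h(\mu_i^k-\mu_j^h)^2 c_{ij}^2 .
\end{equation}
The goal is to show $\sum_{i=1}^s a(g_i^k,g_i^k)\le \sum_{i=1}^s(\mu_i^h-\mu_i^k)$.

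Next I would split each sum over $j$ into the ``near'' part $j\le s$ and the ``far'' part $j\ge s+1$. For the far part, since $U^k\subset W^{k}$ is produced by Rayleigh--Ritz on a space containing the first $s$ discrete eigenvectors' current iterates and $\lambda_i^k\le \lambda_i^0<\lambda_{s+1}^h$ by \eqref{equation_EigenvalueRelation}, the weights $c_{ij}^2$ for $j\ge s+1$ are controlled by $\theta_b(U_s^h,U^k)$-type quantities; more importantly one should exploit that $\{u_i^k\}$ are the Ritz vectors, so $a(u_i^k,u_i^k)=\lambda_i^k$ gives $\sum_j\lambda_j^h c_{ij}^2=\lambda_i^k$, hence $\sum_{j\ge s+1}\lambda_j^h c_{ij}^2\le \lambda_i^k$ and the tail mass $\sum_{j\ge s+1}c_{ij}^2$ is small. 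I would then bound the far contribution using $\lambda_j^h(\mu_i^k-\mu_j^h)^2\le \lambda_j^h(\mu_i^k)^2\le (\mu_i^k)^2\lambda_j^h$, but this is too crude; instead the clean route is the identity $\lambda_j^h(\mu_i^k-\mu_j^h)^2=\mu_i^k-\mu_j^h-(\mu_i^k)^2(\lambda_j^h-\lambda_i^k)\mu_j^h$ (checking algebra with $\mu=\lambda^{-1}$), so that summing against $c_{ij}^2$ and using $\sum_j c_{ij}^2=1$, $\sum_j\lambda_j^h c_{ij}^2=\lambda_i^k$ telescopes the main terms into $\mu_i^k-\sum_j\mu_j^h c_{ij}^2$ minus a manifestly nonnegative remainder.

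Then I would use the Rayleigh--Ritz optimality in the form $\sum_{i=1}^s \sum_j \mu_j^h c_{ij}^2=\mathrm{Tr}(Q_{U^k}T^hQ_{U^k})\le \sum_{i=1}^s\mu_i^h$, which holds because $\mathrm{Tr}(Q_U T^h Q_U)$ is \emph{maximized} over $s$-dimensional subspaces by $U_s^h$ (the top $s$ eigenvalues of $T^h$, equivalently the bottom $s$ of $A^h$); this is exactly the trace-form of the min-max characterization implicit in Remark \ref{Remark_Functional}. Combining, $\sum_{i=1}^s a(g_i^k,g_i^k)\le \sum_{i=1}^s\mu_i^k-\sum_{i=1}^s\sum_j\mu_j^h c_{ij}^2 + (\text{nonpositive remainder})\le \sum_{i=1}^s(\mu_i^k-\mu_i^h)$ — wait, the sign is backwards, so I would instead keep $a(g_i^k,g_i^k)=\sum_j\lambda_j^h(\mu_i^k-\mu_j^h)^2c_{ij}^2$ and bound it above directly by $\sum_j(\mu_j^h-\mu_i^k)c_{ij}^2\cdot\big(\text{something}\le 1\big)$, using that for $j\le s$ we have $\mu_j^h\ge\mu_i^h\ge\mu_i^k$ is false in general, so the correct pairing is term-by-term $\lambda_j^h(\mu_i^k-\mu_j^h)^2\le |\mu_i^k-\mu_j^h|$ precisely when $\lambda_j^h|\mu_i^k-\mu_j^h|\le 1$, i.e. $|\lambda_j^h/\lambda_i^k-1|\le 1$, which holds for $j\le s$ but not for large $j$; the far part must instead be absorbed using its smallness. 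This tension is the crux.

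The main obstacle, therefore, is handling the far tail $j\ge s+1$ cleanly without any gap assumption among $\lambda_1,\dots,\lambda_s$: one needs the tail weights $\sum_{i=1}^s\sum_{j\ge s+1}c_{ij}^2$ to be $O(H^2)$ (or at least small enough that the associated $\lambda_j^h(\mu_i^k-\mu_j^h)^2\le (\mu_i^k)^2\lambda_j^h$ contribution is dominated), which should follow from $\theta_b(U_s,U_s^h)\le Ch^2$, $\lambda_i^k<\lambda_{s+1}^h$, and the fact that $U^k$ arises from Rayleigh--Ritz — but making the bookkeeping uniform in the internal gaps requires working with the subspace $U^k$ and Hausdorff/gap quantities rather than individual vectors, exactly as the introduction warns. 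The secondary claim $\sum_{i=1}^s\|g_i^k\|_a^2\le CH^2$ then follows immediately from \eqref{gik} together with $\mu_i^h-\mu_i^k=\mu_i^h-\mu_i^0+(\mu_i^0-\mu_i^k)\le \mu_i^h-\mu_i^{\widetilde H}\le C\widetilde H^2\le CH^2$, using the a priori estimate \eqref{Equation_eigenvalueApriori} applied on the coarse mesh (so $\mu_i^{\widetilde H}\le\mu_i^h$ by \eqref{equation_EigenvalueRelation}, and $\mu_i^h-\mu_i^{\widetilde H}=\mu_i^h\mu_i^{\widetilde H}(\lambda_i^{\widetilde H}-\lambda_i^h)$, wait $\lambda_i^{\widetilde H}$ could be below $\lambda_i$, so I would bound $|\mu_i^h-\mu_i^{\widetilde H}|$ by $|\mu_i^h-\mu_i|+|\mu_i-\mu_i^{\widetilde H}|\le Ch^2+C\widetilde H^2\le CH^2$).
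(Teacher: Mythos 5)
There is a genuine gap: you never actually establish \eqref{gik}. The route you start (expanding $u_i^k=\sum_j c_{ij}u_j^h$ in the discrete eigenbasis) is fine, but the per-term identity you propose, $\lambda_j^h(\mu_i^k-\mu_j^h)^2=\mu_i^k-\mu_j^h-(\mu_i^k)^2(\lambda_j^h-\lambda_i^k)\mu_j^h$, is false (take $\mu_i^k=1$, $\mu_j^h=\tfrac12$, $\lambda_j^h=2$, $\lambda_i^k=1$: the left side is $\tfrac12$, the right side is $0$), and the resulting wrong telescoping is what produces your ``sign is backwards'' dead end. From there you abandon the clean argument for a near/far splitting over $j\le s$ versus $j\ge s+1$ and declare the control of the far tail to be ``the crux'' that you cannot resolve uniformly in the internal gaps; the proposal therefore ends with the main inequality unproven. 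The obstacle is a phantom created by the algebra error. Using only the two constraints you already wrote down, $\sum_j c_{ij}^2=1$ and $\sum_j\lambda_j^h c_{ij}^2=a(u_i^k,u_i^k)=\lambda_i^k$, one gets the exact identity
\begin{align*}
a(g_i^k,g_i^k)&=\sum_j\lambda_j^h(\mu_i^k-\mu_j^h)^2c_{ij}^2
=(\mu_i^k)^2\sum_j\lambda_j^hc_{ij}^2-2\mu_i^k\sum_jc_{ij}^2+\sum_j\mu_j^hc_{ij}^2\\
&=\sum_j\mu_j^hc_{ij}^2-\mu_i^k=b(T^hu_i^k,u_i^k)-\mu_i^k,
\end{align*}
with no remainder and no splitting. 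Summing over $i$ and invoking precisely the trace bound you quoted, $\sum_{i=1}^s b(T^hu_i^k,u_i^k)={\rm Tr}(Q_{U^k}T^hQ_{U^k})\le\sum_{i=1}^s\mu_i^h$ (the $u_i^k$ are $b$-orthonormal and the compression trace of $T^h$ is maximized by the span of its top $s$ eigenvectors), gives \eqref{gik} directly, uniformly in the internal gaps.

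Once corrected in this way, your argument is essentially the paper's proof in coordinates: the paper writes $a(g_i^k,g_i^k)=a(T^hu_i^k,T^hu_i^k)-\mu_i^k$ (note $a(T^hu_i^k,T^hu_i^k)=b(T^hu_i^k,u_i^k)$ by the definition of $T^h$) and bounds $\sum_i a(T^hu_i^k,T^hu_i^k)$ by $\sum_i\mu_i^h$ through a Ritz problem on the $a$-orthonormal system $\{(T^h)^{1/2}u_i^k\}_{i=1}^s$, i.e.\ the same trace-maximization principle. Your treatment of the secondary claim $\sum_i\|g_i^k\|_a^2\le CH^2$ is acceptable: $\mu_i^{\widetilde H}\le\mu_i^k\le\mu_i^h\le\mu_i$ by \eqref{equation_EigenvalueRelation}, so $\mu_i^h-\mu_i^k\le\mu_i-\mu_i^{\widetilde H}\le C\widetilde H^2\le CH^2$ by the coarse-mesh analogue of \eqref{Equation_eigenvalueApriori}, which matches the paper's short computation. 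But as submitted, the central inequality \eqref{gik} is not proved.
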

\begin{proof}
As $a(u_{i}^{k},u_{i}^{k})=\lambda_{i}^{k}b(u_{i}^{k},u_{i}^{k})=\lambda_{i}^{k}a(T^{h}u_{i}^{k},u_{i}^{k})=\lambda_{i}^{k}$, we have
\begin{align}
a(g_{i}^{k},g_{i}^{k})
=a(T^{h}u_{i}^{k},T^{h}u_{i}^{k})-\mu_{i}^{k}
=a(T^{h}(T^{h})^{\frac{1}{2}}u_{i}^{k},(T^{h})^{\frac{1}{2}}u_{i}^{k})-\mu_{i}^{k}.\label{align_ealpha1}
\end{align}
Since $a((T^{h})^{\frac{1}{2}}u_{i}^{k},(T^{h})^{\frac{1}{2}}u_{j}^{k})=\delta_{ij}$, we may consider the eigenvalue problem $a(T^{h}w_{i},v)=\nu_{i}^{k}a(w_{i},v)$  for all $v\in$ span$\{(T^{h})^{\frac{1}{2}}u_{i}^{k}\}_{i=1}^{s}$.
Moreover,
\begin{equation}\label{equation_traceOk}
\sum_{i=1}^{s}\nu_{i}^{k}
=\sum_{i=1}^{s}\frac{a(T^{h}(T^{h})^{\frac{1}{2}}u_{i}^{k},(T^{h})^{\frac{1}{2}}u_{i}^{k})}
{a((T^{h})^{\frac{1}{2}}u_{i}^{k},(T^{h})^{\frac{1}{2}}u_{i}^{k})}
=\sum_{i=1}^{s}Rt((T^{h})^{\frac{1}{2}}u_{i}^{k}).
\end{equation}
As span$\{(T^{h})^{\frac{1}{2}}u_{i}^{k}\}_{i=1}^{s}\subset V^{h}$, we know $\nu_{i}^{k}\leq \mu_{i}^{h}$, which, together with \eqref{align_ealpha1}, \eqref{equation_traceOk}, yields
\begin{align*}
\sum_{i=1}^{s}a(g_{i}^{k},g_{i}^{k})
&=\sum_{i=1}^{s}\{Rt((T^{h})^{\frac{1}{2}}u_{i}^{k})a((T^{h})^{\frac{1}{2}}u_{i}^{k},
(T^{h})^{\frac{1}{2}}u_{i}^{k})-\mu_{i}^{k}\}\notag\\
&=\sum_{i=1}^{s}\{Rt((T^{h})^{\frac{1}{2}}u_{i}^{k})-\mu_{i}^{k}\}
\leq \sum_{i=1}^{s}(\mu_{i}^{h}-\mu_{i}^{k}).
\end{align*}
By \eqref{equation_EigenvalueRelation} and Theorem \ref{TheoremPriorLaplacian}, we get
\begin{align*}
&\ \ \ \ \sum_{i=1}^{s}a(g_{i}^{k},g_{i}^{k})\leq  \sum_{i=1}^{s}(\mu_{i}^{h}-\mu_{i}^{k})
\leq C\sum_{i=1}^{s}(\lambda_{i}^{k}-\lambda_{i}^{h})
\leq C\sum_{i=1}^{s}(\lambda_{i}^{H}-\lambda_{i})\leq CH^{2},
\end{align*}
which completes the proof of this lemma.  \qed
\end{proof}

\par The following lemma illustrates that the gap between $U_{s}^{h}$ and $U^{k}$ with respect to $||\cdot||_{b}$ is bounded by
the total error of eigenvalues. In particular, it is bounded by $CH^{2}$.
\begin{lemma}\label{Lemma_theta_bk}
Let $a(u_{i}^{k},u_{j}^{k})=\lambda_{i}^{k}b(u_{i}^{k},u_{j}^{k})=\lambda_{i}^{k}\delta_{ij}$ and Assumption \ref{Assumption1} hold, then
\begin{equation}\label{EstimateThetbb}
(\theta_{b}^{k})^{2}\leq \frac{1}{\lambda_{s+1}^{h}-\lambda_{s}^{h}}\sum_{i=1}^{s}(\lambda_{i}^{k}-\lambda_{i}^{h}),
\end{equation}
where $\theta_{b}^{k}$ is the gap between $U_{s}^{h}$ and $U^{k}$ with respect to $||\cdot||_{b}$.
In particular, $(\theta_{b}^{k})^{2}\leq CH^{2}.$

\end{lemma}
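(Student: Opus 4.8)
The plan is to bound $\theta_b^k = \theta_b(U_s^h, U^k)$ by estimating $\sin_b\{U^k; U_s^h\}$ directly, exploiting that $\dim(U^k) = \dim(U_s^h) = s$ so that by Remark \ref{Remark_Hausdorff} the gap equals this one-sided quantity. Given an arbitrary unit vector $v = \sum_{i=1}^s c_i u_i^k \in U^k$ with $\|v\|_b = 1$, I would bound $\inf_{w \in U_s^h}\|v - w\|_b$ by the natural candidate $w = Q_s^h v \in U_s^h$, so that $\sin_b^2\{U^k; U_s^h\} \le \sup_{\|v\|_b=1, v\in U^k}\|Q_{s+1}^h v\|_b^2$. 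The task then reduces to controlling $\|Q_{s+1}^h u_i^k\|_b = \|e_{i,s+1}^k\|_b$ collectively, i.e. showing $\sum_{i=1}^s \|Q_{s+1}^h u_i^k\|_b^2$ is small, and then passing from this sum to the supremum over unit combinations.

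\textbf{Key steps.} First I would relate $e_{i,s+1}^k = -Q_{s+1}^h u_i^k$ to $g_i^k = \mu_i^k u_i^k - T^h u_i^k$ from Lemma \ref{Lemma_gik}. Expanding $u_i^k$ in the eigenbasis $\{u_j^h\}$ and using $T^h u_j^h = \mu_j^h u_j^h$, one has $g_i^k = \sum_j (\mu_i^k - \mu_j^h) b(u_i^k, u_j^h) u_j^h$, hence $a(g_i^k, g_i^k) = \sum_j \lambda_j^h (\mu_i^k - \mu_j^h)^2 b(u_i^k, u_j^h)^2$. Restricting the sum to $j \ge s+1$ and using that, by \eqref{equation_EigenvalueRelation}, $\lambda_j^h \ge \lambda_{s+1}^h$ and $\mu_i^k - \mu_j^h \ge \mu_s^h - \mu_{s+1}^h > 0$ there (or more simply bounding $\lambda_j^h(\mu_i^k - \mu_j^h)^2$ from below on the $U_{s+1}^h$ component), I would extract a lower bound of the form $a(g_i^k, g_i^k) \ge \kappa\, b(Q_{s+1}^h u_i^k, Q_{s+1}^h u_i^k)$ for an explicit constant $\kappa$ involving $\lambda_{s+1}^h$ and $\lambda_s^h$; a clean choice should give $\kappa = \lambda_{s+1}^h - \lambda_s^h$ after optimizing, matching the target denominator. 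Summing over $i$ and invoking \eqref{gik} together with the Courant-Fischer bound $\sum_i(\mu_i^h - \mu_i^k) \le \frac{1}{\lambda_1^h\lambda_s^h}\sum_i(\lambda_i^k - \lambda_i^h)$ — or, to get the stated inequality exactly, by directly estimating $\sum_i \lambda_j^h(\mu_i^k-\mu_j^h)^2 b(u_i^k,u_j^h)^2 \ge (\lambda_{s+1}^h - \lambda_s^h)\sum_i\sum_{j\ge s+1}(\mu_i^k - \mu_j^h)b(u_i^k,u_j^h)^2$ and relating the latter to $\sum_i(\mu_i^h - \mu_i^k)$ — yields $\sum_{i=1}^s \|Q_{s+1}^h u_i^k\|_b^2 \le \frac{1}{\lambda_{s+1}^h - \lambda_s^h}\sum_{i=1}^s(\lambda_i^k - \lambda_i^h)$. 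Finally, for a unit $v = \sum c_i u_i^k$, orthonormality in $b(\cdot,\cdot)$ gives $\sum c_i^2 = 1$, and since $\{Q_{s+1}^h u_i^k\}$ need not be orthonormal I would use $\|Q_{s+1}^h v\|_b^2 = \|\sum c_i Q_{s+1}^h u_i^k\|_b^2 \le (\sum c_i^2)(\sum \|Q_{s+1}^h u_i^k\|_b^2)$ — actually, observing that $Q_{s+1}^h$ is a $b$-orthogonal projection on a space where $\{u_i^k\}$ is $b$-orthonormal, one gets the sharper identity $\|Q_{s+1}^h v\|_b^2 \le \sum_i c_i^2 \|Q_{s+1}^h u_i^k\|_b^2 \cdot$ (something), but the Cauchy–Schwarz bound already suffices. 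This delivers $(\theta_b^k)^2 \le \frac{1}{\lambda_{s+1}^h - \lambda_s^h}\sum_{i=1}^s(\lambda_i^k - \lambda_i^h)$. The ``in particular'' clause then follows from \eqref{equation_EigenvalueRelation}, \eqref{Equation_eigenvalueApriori} and Assumption \ref{Assumption1}: $\sum_i(\lambda_i^k - \lambda_i^h) \le \sum_i(\lambda_i^H - \lambda_i) \le CH^2$, while $\lambda_{s+1}^h - \lambda_s^h$ is bounded below by a positive constant independent of $h$ by the obvious-gap assumption.

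\textbf{Main obstacle.} The delicate point is getting the constant to be exactly $\frac{1}{\lambda_{s+1}^h - \lambda_s^h}$ rather than something crude, and doing so \emph{without} any assumption on the internal gaps $d_i^-$ among $\lambda_2,\dots,\lambda_s$. The subtlety is that $\{u_i^k\}_{i=1}^s$ is only $b$-orthonormal, not aligned with $\{u_i^h\}$, so one cannot diagonalize everything simultaneously; the argument must work with the full cross Gram matrix $b(u_i^k, u_j^h)$ and extract the $U_{s+1}^h$-block cleanly. I expect the cleanest route is to write $\sum_{i=1}^s b(Q_{s+1}^h u_i^k, Q_{s+1}^h u_i^k) = \sum_{i=1}^s \sum_{j\ge s+1} b(u_i^k, u_j^h)^2 = \mathrm{Tr}(Q_{s+1}^h Q_{U^k} Q_{s+1}^h)$, bound each $\mu_i^k - \mu_j^h$ from below by $\mu_s^k - \mu_{s+1}^h$ for $j \ge s+1$ (legitimate by \eqref{equation_EigenvalueRelation}), convert to the $\lambda$ scale via $\mu_s^k - \mu_{s+1}^h = \frac{\lambda_{s+1}^h - \lambda_s^k}{\lambda_s^k \lambda_{s+1}^h} \ge \frac{\lambda_{s+1}^h - \lambda_s^h}{\lambda_{s+1}^h\lambda_s^k}$, and then absorb the remaining scalar factors using $\lambda_i^k \le \lambda_s^k$ and the trivial identity $\sum_i(\mu_i^h - \mu_i^k)\lambda_i^k\lambda_i^h = \sum_i(\lambda_i^k - \lambda_i^h)$ — this last identity, together with $\lambda_i^h \le \lambda_s^k$, is what converts the $\mu$-error bound into the $\lambda$-error bound with the right constant. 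Managing these scalar bookkeeping steps so that nothing depends on the left gaps, and keeping track of which eigenvalue ($\lambda_s^k$ vs $\lambda_s^h$) appears where, is the real work.
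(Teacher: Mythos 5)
Your outer framework is sound: since $\dim(U^k)=\dim(U_s^h)=s$, estimating $\sin_b\{U^k;U_s^h\}$ suffices, the best approximation is $Q_s^h v$, and for a $b$-unit $v=\sum_i c_iu_i^k$ the triangle and Cauchy--Schwarz inequalities give $\|Q_{s+1}^hv\|_b^2\le\sum_{i=1}^s\|Q_{s+1}^hu_i^k\|_b^2$, so everything reduces to bounding $\sum_i\|e_{i,s+1}^k\|_b^2$. The genuine gap is in how you bound that sum. Routing through $g_i^k$ and Lemma \ref{Lemma_gik} works in the $\mu$-scale with \emph{quadratic} gap weights: $a(g_i^k,g_i^k)=\sum_j\lambda_j^h(\mu_i^k-\mu_j^h)^2\,b(u_i^k,u_j^h)^2$, so after dropping the $j\le s$ terms the best per-index factor you can extract is of the form $\lambda_{s+1}^h(\mu_i^k-\mu_{s+1}^h)^2\approx(\lambda_{s+1}^h-\lambda_i^k)^2/((\lambda_i^k)^2\lambda_{s+1}^h)$, and combining with $\sum_i(\mu_i^h-\mu_i^k)=\sum_i(\lambda_i^k-\lambda_i^h)/(\lambda_i^k\lambda_i^h)$ yields a constant like $(\lambda_s^k)^2\lambda_{s+1}^h/\bigl(\lambda_1^2(\lambda_{s+1}^h-\lambda_s^k)^2\bigr)$, not $1/(\lambda_{s+1}^h-\lambda_s^h)$. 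A one-dimensional check ($s=1$, $u_1^k=\cos\phi\,u_1^h+\sin\phi\,u_2^h$, $\lambda_1^h=100$, $\lambda_2^h=110$) shows the $g$-based constant is an order of magnitude larger than the stated one, so no bookkeeping of the scalar factors can recover \eqref{EstimateThetbb} exactly. Moreover, the specific repair you sketch is backwards: since $\lambda_s^k\ge\lambda_s^h$ one has $\lambda_{s+1}^h-\lambda_s^k\le\lambda_{s+1}^h-\lambda_s^h$, so $\mu_s^k-\mu_{s+1}^h=\frac{\lambda_{s+1}^h-\lambda_s^k}{\lambda_s^k\lambda_{s+1}^h}\le\frac{\lambda_{s+1}^h-\lambda_s^h}{\lambda_s^k\lambda_{s+1}^h}$, the opposite of what you assert (similarly $1/(\lambda_i^k\lambda_i^h)\le 1/(\lambda_1^h\lambda_s^h)$ fails for $i<s$). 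What you would actually obtain is $(\theta_b^k)^2\le C'\sum_i(\lambda_i^k-\lambda_i^h)$ with a cruder, gap-independent $C'$ — enough for the ``in particular'' $CH^2$ clause, but not the lemma as stated.

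The paper avoids the $\mu$-scale entirely and gets the sharp constant by a linear trace argument with $A^h$: write $\lambda_{s+1}^h-\lambda_i^k=b((\lambda_{s+1}^h-A^h)u_i^k,u_i^k)$, discard the nonpositive $U_{s+1}^h$-contribution to get $\lambda_{s+1}^h-\lambda_i^k\le b((\lambda_{s+1}^h-A^h)Q_s^hu_i^k,Q_s^hu_i^k)$, sum over $i$ and exchange the sums to obtain $\sum_{j=1}^s(\lambda_{s+1}^h-\lambda_j^h)\bigl(1-\sin_b^2\{u_j^h;U^k\}\bigr)$, and rearrange to $\sum_j(\lambda_{s+1}^h-\lambda_j^h)\sin_b^2\{u_j^h;U^k\}\le\sum_j(\lambda_j^k-\lambda_j^h)$; Assumption \ref{Assumption1} then lets one pull out $\lambda_{s+1}^h-\lambda_s^h$, and the reduction $(\theta_b^k)^2\le\sum_j\sin_b^2\{u_j^h;U^k\}$ (the mirror image of your Cauchy--Schwarz step, quoted from \cite{knyazev2006new}) finishes. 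If you replace your $g_i^k$-based core estimate with this identity-plus-projection-splitting argument, your proof goes through and delivers the stated constant.
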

\begin{proof}
Since $a(u_{i}^{k},u_{j}^{k})=\lambda_{i}^{k}b(u_{i}^{k},u_{j}^{k})=\lambda_{i}^{k}\delta_{ij}$, it is easy to know that $\{u_{i}^{k}\}_{i=1}^{s}$ forms a group of normal and orthogonal basis for $U^{k}$ with respect to $b(\cdot,\cdot)$. By \eqref{VhDecomposition}, we get
\begin{align}
\lambda_{s+1}^{h}-\lambda_{i}^{k}
&=b((\lambda_{s+1}^{h}-A^{h})u_{i}^{k},u_{i}^{k})
=b((\lambda_{s+1}^{h}-A^{h})Q_{s}^{h}u_{i}^{k},Q_{s}^{h}u_{i}^{k})\notag\\
&\ \ \ \ +b((\lambda_{s+1}^{h}-A^{h})Q_{s+1}^{h}u_{i}^{k},Q_{s+1}^{h}u_{i}^{k})
\leq b((\lambda_{s+1}^{h}-A^{h})Q_{s}^{h}u_{i}^{k},Q_{s}^{h}u_{i}^{k}).\label{ThetaEstimate1}
\end{align}
Combining $\eqref{ThetaEstimate1}$ and the fact that $Q_{s}^{h}v=\sum_{j=1}^{s}Q_{j,s}^{h}v$ for all $v\in V^{h}$, we have
\begin{align*}
 \sum_{i=1}^{s}(\lambda_{s+1}^{h}-\lambda_{i}^{k})
&\leq \sum_{i=1}^{s}\sum_{j=1}^{s}(\lambda_{s+1}^{h}-\lambda_{j}^{h})b(Q_{j,s}^{h}u_{i}^{k},Q_{j,s}^{h}u_{i}^{k})
=\sum_{i=1}^{s}\sum_{j=1}^{s}(\lambda_{s+1}^{h}-\lambda_{j}^{h})(b(u_{i}^{k},u_{j}^{h}))^{2}\\
&=\sum_{j=1}^{s}(\lambda_{s+1}^{h}-\lambda_{j}^{h})\sum_{i=1}^{s}(b(u_{i}^{k},u_{j}^{h}))^{2}
=\sum_{j=1}^{s}(\lambda_{s+1}^{h}-\lambda_{j}^{h})||Q_{U^{k}}u_{j}^{h}||^{2}_{b}\\
&=\sum_{j=1}^{s}(\lambda_{s+1}^{h}-\lambda_{j}^{h})(1-||(I-Q_{U^{k}})u_{j}^{h}||^{2}_{b})
=\sum_{j=1}^{s}(\lambda_{s+1}^{h}-\lambda_{j}^{h})(1-\sin^{2}_{b}\{u_{j}^{h};U^{k}\}).
\end{align*}
Hence, $\sum_{i=1}^{s}(\lambda_{s+1}^{h}-\lambda_{i}^{h})\sin_{b}^{2}\{u_{i}^{h};U^{k}\}\leq \sum_{i=1}^{s}(\lambda_{i}^{k}-\lambda_{i}^{h}).$
If  Assumption 1 holds (also $\lambda_{s}^{h}<\lambda_{s+1}^{h}$ as $h\to 0$), we obtain
\begin{equation}\notag
\sum_{i=1}^{s}\sin_{b}^{2}\{u_{i}^{h};U^{k}\}\leq \frac{1}{\lambda_{s+1}^{h}-\lambda_{s}^{h}}\sum_{i=1}^{s}(\lambda_{i}^{k}-\lambda_{i}^{h}),
\end{equation}
 which, together with  the fact that $(\theta^{k}_{b})^{2}=\sin_{b}^{2}\{U_{s}^{h};U^{k}\}\leq\sum_{i=1}^{s}\sin_{b}^{2}\{u_{i}^{h};U^{k}\}$ (see Lemma 3.4 in \cite{Bramble2} or Corollary 2.2 in \cite{knyazev2006new}), yields \eqref{EstimateThetbb}.
By \eqref{equation_EigenvalueRelation} and Theorem \ref{TheoremPriorLaplacian}, we get
\begin{align*}
(\theta_{b}^{k})^{2}
\leq \frac{1}{\lambda_{s+1}^{h}-\lambda_{s}^{h}}\sum_{i=1}^{s}(\lambda_{i}^{k}-\lambda_{i}^{h})
\leq C\sum_{i=1}^{s}(\lambda_{i}^{H}-\lambda_{i})
\leq CH^{2},
\end{align*}
which completes the proof of this lemma.  \qed
\end{proof}
\par We may use a similar argument as in the proof of Lemma \ref{Lemma_theta_bk} to obtain the following result.
\begin{corollary}\label{Lemma_theta_ak}
Let $a(u_{i}^{k},u_{j}^{k})=\lambda_{i}^{k}b(u_{i}^{k},u_{j}^{k})=\lambda_{i}^{k}\delta_{ij}$ and Assumption \ref{Assumption1} hold, then
\begin{equation}\label{EstimateThetaa}
(\theta_{a}^{k})^{2}\leq \frac{1}{\mu_{s}^{h}-\mu_{s+1}^{h}}\sum_{i=1}^{s}(\mu_{i}^{h}-\mu_{i}^{k}),
\end{equation}
where $\theta_{a}^{k}$ is the gap between $U_{s}^{h}$ and $U^{k}$ with respect to $||\cdot||_{a}$.
In particular, $(\theta_{a}^{k})^{2}\leq CH^{2}.$
\end{corollary}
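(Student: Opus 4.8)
The plan is to mirror the proof of Lemma \ref{Lemma_theta_bk}, but work with the operator $T^{h}$ and its eigenvalues $\mu_{i}^{h} = (\lambda_{i}^{h})^{-1}$ in place of $A^{h}$ and $\lambda_{i}^{h}$, using that $\{u_{i}^{k}\}_{i=1}^{s}$ is also an $a(\cdot,\cdot)$-orthogonal basis (up to scaling by $\sqrt{\mu_{i}^{k}}$) of $U^{k}$, and that the decomposition \eqref{VhDecomposition} is $a(\cdot,\cdot)$-orthogonal as well. The quantity controlling $\theta_{a}^{k}$ should be $\sum_{i=1}^{s}\sin_{a}^{2}\{u_{i}^{h};U^{k}\}$, and the natural ``energy'' to expand is $a(T^{h}u_{i}^{k},u_{i}^{k}) - \mu_{s+1}^{h}a(u_{i}^{k},u_{i}^{k})$, which plays the role that $b((\lambda_{s+1}^{h}-A^{h})u_{i}^{k},u_{i}^{k})$ played before.

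First I would record that since $A^{h}u_{i}^{k}$ need not be a good object, one instead uses $T^{h}$, which is $a(\cdot,\cdot)$-symmetric with $T^{h}u_{j}^{h}=\mu_{j}^{h}u_{j}^{h}$ and $\mu_{1}^{h}\geq\mu_{2}^{h}\geq\cdots$. Normalizing so that $a(\tilde u_{i}^{k},\tilde u_{j}^{k})=\delta_{ij}$ with $\tilde u_{i}^{k}=\sqrt{\mu_{i}^{k}}u_{i}^{k}$, I would write, for each $i$,
\begin{equation}\notag
\mu_{i}^{k} - \mu_{s+1}^{h} = a(T^{h}\tilde u_{i}^{k},\tilde u_{i}^{k}) - \mu_{s+1}^{h}a(\tilde u_{i}^{k},\tilde u_{i}^{k}),
\end{equation}
using $a(T^{h}\tilde u_{i}^{k},\tilde u_{i}^{k}) = \mu_{i}^{k}a(\tilde u_{i}^{k},\tilde u_{i}^{k}) = \mu_{i}^{k}$ (which is exactly $Rt(\tilde u_{i}^{k})=\mu_{i}^{k}$ from $a(u_{i}^{k},u_{i}^{k})=\lambda_{i}^{k}b(u_{i}^{k},u_{i}^{k})$). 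Then split $\tilde u_{i}^{k}=P_{s}^{h}\tilde u_{i}^{k}+P_{s+1}^{h}\tilde u_{i}^{k}$ along the $a$-orthogonal decomposition $V^{h}=U_{s}^{h}\oplus U_{s+1}^{h}$; on $U_{s+1}^{h}$ the operator $T^{h}$ has eigenvalues $\leq\mu_{s+1}^{h}$, so the $U_{s+1}^{h}$-part contributes a nonpositive amount to $a(T^{h}\tilde u_{i}^{k},\tilde u_{i}^{k})-\mu_{s+1}^{h}a(\tilde u_{i}^{k},\tilde u_{i}^{k})$, leaving
\begin{equation}\notag
\mu_{i}^{k}-\mu_{s+1}^{h} \leq a\big((T^{h}-\mu_{s+1}^{h})P_{s}^{h}\tilde u_{i}^{k},P_{s}^{h}\tilde u_{i}^{k}\big) = \sum_{j=1}^{s}(\mu_{j}^{h}-\mu_{s+1}^{h})\, a\big(P_{j,s}^{h}\tilde u_{i}^{k},P_{j,s}^{h}\tilde u_{i}^{k}\big),
\end{equation}
where $P_{j,s}^{h}$ is the $a$-orthogonal projector onto $V_{j}^{h}={\rm span}\{u_{j}^{h}\}$.

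Next, summing over $i=1,\dots,s$ and swapping the order of summation, the inner sum $\sum_{i=1}^{s}a(P_{j,s}^{h}\tilde u_{i}^{k},P_{j,s}^{h}\tilde u_{i}^{k})$ equals $\|P_{U^{k}}(\sqrt{\mu_{j}^{h}}\,\tilde u_{j}^{h})\|_{a}^{2}$ after recognizing that, since $\{\tilde u_{i}^{k}\}$ and $\{\sqrt{\mu_{j}^{h}}\tilde u_{j}^{h}\}$ are $a$-orthonormal, $\sum_{i}a(\tilde u_{i}^{k},\sqrt{\mu_{j}^{h}}\tilde u_{j}^{h})^{2}=\|P_{U^{k}}(\sqrt{\mu_{j}^{h}}\tilde u_{j}^{h})\|_{a}^{2}=1-\sin_{a}^{2}\{u_{j}^{h};U^{k}\}$. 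This gives $\sum_{i=1}^{s}(\mu_{s+1}^{h}-\mu_{i}^{h})\sin_{a}^{2}\{u_{i}^{h};U^{k}\}\leq\sum_{i=1}^{s}(\mu_{i}^{h}-\mu_{i}^{k})$ after reorganizing terms; dividing by $\mu_{s}^{h}-\mu_{s+1}^{h}>0$ (valid since $\lambda_{s}^{h}<\lambda_{s+1}^{h}$ by Assumption \ref{Assumption1}, as $h\to0$) and using $(\theta_{a}^{k})^{2}=\sin_{a}^{2}\{U_{s}^{h};U^{k}\}\leq\sum_{i=1}^{s}\sin_{a}^{2}\{u_{i}^{h};U^{k}\}$ (Lemma 3.4 in \cite{Bramble2} / Corollary 2.2 in \cite{knyazev2006new}) yields \eqref{EstimateThetaa}. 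The $CH^{2}$ bound then follows from $\sum_{i=1}^{s}(\mu_{i}^{h}-\mu_{i}^{k})\leq C\sum_{i=1}^{s}(\lambda_{i}^{k}-\lambda_{i}^{h})\leq C\sum_{i=1}^{s}(\lambda_{i}^{H}-\lambda_{i})\leq CH^{2}$ via \eqref{equation_EigenvalueRelation} and Theorem \ref{TheoremPriorLaplacian}, exactly as in Lemma \ref{Lemma_gik}.

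The main obstacle, and the only place requiring genuine care, is bookkeeping the scaling factors: one must consistently track that the $a$-orthonormal frame of $U^{k}$ is $\{\sqrt{\mu_{i}^{k}}u_{i}^{k}\}$ while that of $U_{s}^{h}$ is $\{\sqrt{\mu_{i}^{h}}u_{i}^{h}\}$, and confirm that these rescalings do not spoil the identification of the cross-Gram sums with $1-\sin_{a}^{2}\{u_{i}^{h};U^{k}\}$ (they do not, because each frame vector has unit $a$-norm, which is exactly what $\sin_{a}$ measures). Everything else is a routine transcription of the Lemma \ref{Lemma_theta_bk} argument with $(b,A^{h},\lambda^{h})$ replaced by $(a,T^{h},\mu^{h})$ and the inequality directions flipped because $\mu^{h}$ is decreasing in the index.
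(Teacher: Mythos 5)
Your proposal is correct and follows exactly the route the paper intends: the paper gives no separate proof of this corollary, stating only that it follows by an argument similar to Lemma \ref{Lemma_theta_bk}, and your transcription with $(a(\cdot,\cdot),\,T^{h},\,\mu^{h})$ in place of $(b(\cdot,\cdot),\,A^{h},\,\lambda^{h})$ — using the $a$-orthonormal frame $\{\sqrt{\mu_{i}^{k}}u_{i}^{k}\}$ of $U^{k}$, dropping the nonpositive $U_{s+1}^{h}$-contribution, and invoking the same subadditivity $(\theta_{a}^{k})^{2}\leq\sum_{i}\sin_{a}^{2}\{u_{i}^{h};U^{k}\}$ — is precisely that argument. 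The only blemish is a sign slip in your reorganized inequality, which should read $\sum_{i=1}^{s}(\mu_{i}^{h}-\mu_{s+1}^{h})\sin_{a}^{2}\{u_{i}^{h};U^{k}\}\leq\sum_{i=1}^{s}(\mu_{i}^{h}-\mu_{i}^{k})$ (coefficients $\mu_{i}^{h}-\mu_{s+1}^{h}$, not $\mu_{s+1}^{h}-\mu_{i}^{h}$), exactly as your subsequent division by $\mu_{s}^{h}-\mu_{s+1}^{h}>0$ requires.
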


\par The gap $\theta_{a}^{k}$ and $\theta_{b}^{k}$ can be characterized by the $a$-norm and the $b$-norm of the operators, respectively.
\begin{lemma}\label{Lemma_Theta_EnergyL2}
It holds that
\begin{equation}\notag
\theta^{k}_{a}=\interleave P_{\perp}^{k}Q_{s}^{h}\interleave_{a}
=\interleave Q_{s}^{h}P_{\perp}^{k}\interleave_{a}
=\interleave Q_{s+1}^{h}P_{U^{k}}\interleave_{a}
=\interleave P_{U^{k}}Q_{s+1}^{h}\interleave_{a},
\end{equation}
and
\begin{equation}\notag
\theta^{k}_{b}=\interleave Q_{\perp}^{k}Q_{s}^{h}\interleave_{b}
=\interleave Q_{s}^{h}Q_{\perp}^{k}\interleave_{b}
=\interleave Q_{s+1}^{h}Q_{U^{k}}\interleave_{b}
=\interleave Q_{U^{k}}Q_{s+1}^{h}\interleave_{b}.
\end{equation}
\end{lemma}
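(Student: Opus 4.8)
The plan is to prove all eight identities by reducing them to the single observation that for any pair of closed subspaces $W_{1},W_{2}$ of a Hilbert space, if $P_{1}$ and $P_{2}$ denote the orthogonal projectors onto $W_{1}$ and $W_{2}$, then $\sin\{W_{1};W_{2}\}=\interleave (I-P_{2})P_{1}\interleave$, where the norm is the operator norm induced by the inner product. Indeed, $\sin\{W_{1};W_{2}\}=\sup_{u\in W_{1},\|u\|=1}\inf_{v\in W_{2}}\|u-v\|=\sup_{u\in W_{1},\|u\|=1}\|(I-P_{2})u\|=\interleave (I-P_{2})P_{1}\interleave$, since the best approximation of $u$ from $W_{2}$ is $P_{2}u$, and restricting the supremum to unit vectors of $W_{1}$ is the same as inserting $P_{1}$. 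Applying this with $(\cdot,\cdot)=a(\cdot,\cdot)$, $W_{1}=U_{s}^{h}$ (so $P_{1}=P_{U_{s}^{h}}=Q_{s}^{h}$, since the $a$- and $b$-orthogonal projectors onto $U_{s}^{h}$ coincide), $W_{2}=U^{k}$ (so $I-P_{2}=P_{\perp}^{k}$) gives $\sin_{a}\{U_{s}^{h};U^{k}\}=\interleave P_{\perp}^{k}Q_{s}^{h}\interleave_{a}$; doing it with the roles reversed gives $\sin_{a}\{U^{k};U_{s}^{h}\}=\interleave Q_{s+1}^{h}P_{U^{k}}\interleave_{a}$ (using $I-Q_{s}^{h}=Q_{s+1}^{h}$). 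By Remark \ref{Remark_Hausdorff}, since $\dim(U_{s}^{h})=\dim(U^{k})=s<\infty$, these two sines are equal and both equal $\theta_{a}^{k}$, which already yields the outer two equalities in the first display and simultaneously the ``reversed'' forms $\interleave Q_{s}^{h}P_{\perp}^{k}\interleave_{a}$ and $\interleave P_{U^{k}}Q_{s+1}^{h}\interleave_{a}$ once we note the next step.

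Next I would invoke the elementary fact that the operator norm is invariant under taking adjoints: $\interleave S\interleave_{a}=\interleave S^{*}\interleave_{a}$, where $S^{*}$ is the $a$-adjoint. Since $Q_{s}^{h}$, $Q_{s+1}^{h}$ and $P_{\perp}^{k}$, $P_{U^{k}}$ are all $a$-self-adjoint projectors, $(P_{\perp}^{k}Q_{s}^{h})^{*}=Q_{s}^{h}P_{\perp}^{k}$ and $(Q_{s+1}^{h}P_{U^{k}})^{*}=P_{U^{k}}Q_{s+1}^{h}$, so the first and second entries of the first display are equal, and likewise the third and fourth. Combined with the previous paragraph this closes the chain of four equalities for $\theta_{a}^{k}$. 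The second display is proved identically, working in the Hilbert space $(V^{h},b(\cdot,\cdot))$: here the relevant projectors onto $U_{s}^{h}$ and $U^{k}$ are $Q_{s}^{h}$ and $Q_{U^{k}}$ (with $Q_{\perp}^{k}=I-Q_{U^{k}}$ and $Q_{s+1}^{h}=I-Q_{s}^{h}$), the formula $\sin\{W_{1};W_{2}\}=\interleave(I-P_{2})P_{1}\interleave$ gives the two outer equalities, and $b$-self-adjointness of these projectors gives the two inner ones; Remark \ref{Remark_Hausdorff} again reconciles $\sin_{b}\{U_{s}^{h};U^{k}\}$ with $\sin_{b}\{U^{k};U_{s}^{h}\}$ so that all four expressions equal $\theta_{b}^{k}$.

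I expect the only genuinely substantive point to be the justification that the supremum over $\{u\in W_{1}:\|u\|=1\}$ can be replaced by composing with the projector $P_{1}$ — that is, $\sup_{u\in W_{1},\|u\|=1}\|(I-P_{2})u\|=\interleave(I-P_{2})P_{1}\interleave$. This holds because $(I-P_{2})P_{1}$ annihilates $W_{1}^{\perp}$ and agrees with $I-P_{2}$ on $W_{1}$, so its norm is exactly the supremum of $\|(I-P_{2})u\|$ over unit vectors $u\in W_{1}$; one should also note the boundary conventions in Definition \ref{Definition_gap} ($\sin\{0;W_{2}\}=0$, which matches $\interleave\cdot\interleave=0$ of the zero operator), though these degenerate cases do not arise here since $s\geq1$ and $U_{s}^{h},U^{k}$ are nonzero. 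Everything else is bookkeeping with self-adjoint projectors, the identification of $a$- and $b$-orthogonal projectors onto the eigenspaces $U_{s}^{h}$ and $U^{k}$ (which the paper has already fixed in Section 2.2 and Section 4), and Remark \ref{Remark_Hausdorff}. The hard part, such as it is, is simply to state these projector-norm identities cleanly; no estimate or limiting argument is needed.
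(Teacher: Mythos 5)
Your proof is correct, and it is exactly the argument the paper has in mind: the paper omits the proof of this lemma, saying only that it follows by ``combining Definition \ref{Definition_gap} and Remark \ref{Remark_Hausdorff},'' and your identification $\sin\{W_{1};W_{2}\}=\interleave(I-P_{2})P_{1}\interleave$ together with $a$- (resp.\ $b$-) self-adjointness of the projectors and the equal-dimension symmetry from Remark \ref{Remark_Hausdorff} is precisely that omitted expansion. The one hypothesis you use implicitly, $\dim U^{k}=s$, is guaranteed by the $b$-orthonormality of $\{u_{i}^{k}\}_{i=1}^{s}$ from Step 3 of Algorithm 3.1, so nothing is missing.
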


\begin{proof}
Combining Definition \ref{Definition_gap} and Remark \ref{Remark_Hausdorff}, it is easy to prove this lemma. In order to focus on our main theoretical analysis, we ignore this proof here. \qed
\end{proof}

\begin{lemma}\label{Lemma_Relationu1e2}
If Assumption \ref{Assumption1} holds, then
\begin{align}\label{Relationu1e2_equlity}
||e_{i,s+1}^{k}||_{E_{i}^{k}}^{2}=(\lambda_{i}^{k}-Rq(Q_{s}^{h}u_{i}^{k}))||Q_{s}^{h}u_{i}^{k}||_{b}^{2},
\end{align}
\begin{equation}\label{Relationu1e2_inequlity}
0 \leq \lambda_{i}^{k}-Rq(Q_{s}^{h}u_{i}^{k})\leq CH^{2},
\end{equation}
and
\begin{equation}\label{Relationu1e2_inequlity3}
\lambda_{i}^{k}-Rq(Q_{s}^{h}u_{i}^{k})\leq CH||g_{i}^{k}||_{a}.
\end{equation}
Moreover,
\begin{equation}\label{Relationu1e2_inequlity2}
||(\lambda_{i}^{k}-A^{h})Q_{s}^{h}u_{i}^{k}||_{b}\leq C||g_{i}^{k}||_{a}.
\end{equation}
\end{lemma}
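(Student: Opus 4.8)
The plan is to exploit the decomposition $u_i^k = Q_s^h u_i^k + Q_{s+1}^h u_i^k = Q_s^h u_i^k - e_{i,s+1}^k$ and the fact that $(\cdot,\cdot)_{E_i^k}$ is an inner product on $U_{s+1}^h$. For \eqref{Relationu1e2_equlity}, I would start from the identity $(u_i^k,v)_{E_i^k}=a(u_i^k,v)-\lambda_i^k b(u_i^k,v)=0$ for all $v\in V^h$, which holds because $A^h u_i^k=\lambda_i^k u_i^k$ in the discrete Rayleigh–Ritz sense (more precisely, $\lambda_i^k=Rq(u_i^k)$ and $u_i^k$ is the Ritz vector in $W^k$; but actually the cleanest route is $a(u_i^k,v)=\lambda_i^k b(u_i^k,v)$ for $v\in U^k$, and to note that what we really need is the $b(\cdot,\cdot)$- and $a(\cdot,\cdot)$-orthogonality of $Q_s^h u_i^k$ against $Q_{s+1}^h u_i^k$, since $U_s^h\perp U_{s+1}^h$). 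Expanding $\|e_{i,s+1}^k\|_{E_i^k}^2=(Q_{s+1}^h u_i^k,Q_{s+1}^h u_i^k)_{E_i^k}$ and using $\|u_i^k\|_b^2=1$, $a(u_i^k,u_i^k)=\lambda_i^k$, together with the orthogonal splitting $a(u_i^k,u_i^k)=a(Q_s^h u_i^k,Q_s^h u_i^k)+a(Q_{s+1}^h u_i^k,Q_{s+1}^h u_i^k)$ and the analogous $b$-splitting, I get
\[
\|e_{i,s+1}^k\|_{E_i^k}^2 = a(Q_{s+1}^h u_i^k,Q_{s+1}^h u_i^k)-\lambda_i^k\|Q_{s+1}^h u_i^k\|_b^2 = \big(\lambda_i^k-a(Q_s^h u_i^k,Q_s^h u_i^k)\big) - \lambda_i^k\big(1-\|Q_s^h u_i^k\|_b^2\big),
\]
and then $a(Q_s^h u_i^k,Q_s^h u_i^k)=Rq(Q_s^h u_i^k)\|Q_s^h u_i^k\|_b^2$ collapses this to the stated formula.

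For the two-sided bound \eqref{Relationu1e2_inequlity}: the lower bound $\lambda_i^k-Rq(Q_s^h u_i^k)\ge 0$ follows because $Q_s^h u_i^k\in U_s^h$, so $Rq(Q_s^h u_i^k)\le\lambda_s^h\le\lambda_i^k$ by the Courant–Fischer principle applied within $U_s^h$ — wait, more carefully, $Rq$ of any nonzero element of $U_s^h$ is at most $\lambda_s^h$, and $\lambda_s^h\le\lambda_s^k$... but I need it $\le\lambda_i^k$, which is not automatic for $i<s$. The correct argument: decompose $Q_s^h u_i^k=\sum_{j=1}^s c_j u_j^h$, so $Rq(Q_s^h u_i^k)=\frac{\sum \lambda_j^h c_j^2}{\sum c_j^2}$; this is a convex combination of the $\lambda_j^h$'s and is not in general $\le\lambda_i^k$. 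So instead the lower bound should come from \eqref{Relationu1e2_equlity} itself: the left side is a norm-squared hence $\ge 0$, and $\|Q_s^h u_i^k\|_b^2>0$ (since $\theta_b^k\le CH<1$), forcing $\lambda_i^k-Rq(Q_s^h u_i^k)\ge 0$. For the upper bound $CH^2$, I would write $\lambda_i^k - Rq(Q_s^h u_i^k)$ using \eqref{Relationu1e2_equlity} as $\|e_{i,s+1}^k\|_{E_i^k}^2/\|Q_s^h u_i^k\|_b^2$, bound the numerator by $\|e_{i,s+1}^k\|_{E_i^k}^2\le a(e_{i,s+1}^k,e_{i,s+1}^k)=\|Q_{s+1}^h u_i^k\|_a^2\le (\theta_a^k)^2\cdot(\text{something})$, actually more directly $\|Q_{s+1}^h u_i^k\|_b^2=\sin_b^2\{u_i^k;U_s^h\}\le (\theta_b^k)^2\le CH^2$ and $\|Q_{s+1}^h u_i^k\|_a^2\le CH^2$ by Corollary \ref{Lemma_theta_ak} combined with Lemma \ref{Lemma_NormQs+1h}-type bounds, while $\|Q_s^h u_i^k\|_b^2\ge 1-CH^2\ge 1/2$ for small $H$.

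For \eqref{Relationu1e2_inequlity3}, the sharper estimate with $H\|g_i^k\|_a$ rather than $H^2$: here I would relate $e_{i,s+1}^k=-Q_{s+1}^h u_i^k$ to $g_i^k=\mu_i^k u_i^k - T^h u_i^k$. Since $T^h u_i^k=\mu_i^k u_i^k-g_i^k$ and $T^h$ acts diagonally on the $u_j^h$ basis with eigenvalues $\mu_j^h$, applying $Q_{s+1}^h$ and using that $T^h$ restricted to $U_{s+1}^h$ has norm $\mu_{s+1}^h=1/\lambda_{s+1}^h$, one extracts $\|Q_{s+1}^h u_i^k\|_b\le C\|Q_{s+1}^h g_i^k\|_b\le C\|g_i^k\|_a/\sqrt{\lambda_1}$ — roughly, $(\mu_i^k-\mu_{s+1}^h)$ separates things but the $b$-norm of $Q_{s+1}^h u_i^k$ picks up a factor controlled by $\|g_i^k\|$, and feeding $\|Q_{s+1}^h u_i^k\|_b\le C\|g_i^k\|_a$ together with $\|Q_{s+1}^h u_i^k\|_a\le CH$ (from $\theta_a^k\le CH$) into \eqref{Relationu1e2_equlity} gives $\lambda_i^k-Rq(Q_s^h u_i^k)\le \|Q_{s+1}^h u_i^k\|_a\|Q_{s+1}^h u_i^k\|_b/\|Q_s^h u_i^k\|_b^2\le CH\|g_i^k\|_a$. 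Finally \eqref{Relationu1e2_inequlity2}: since $(\lambda_i^k-A^h)u_i^k=-r_i^k$... no, rather $(\lambda_i^k-A^h)Q_s^h u_i^k=(\lambda_i^k-A^h)u_i^k-(\lambda_i^k-A^h)Q_{s+1}^h u_i^k$; on $U_{s+1}^h$ the operator $\lambda_i^k-A^h$ is bounded above in $b$-norm by $C(\lambda_s^h-\lambda_i^k)$... no, it is bounded but by a large constant $O(h^{-2m})$, so this naive split is useless. Instead, I would use $A^h=(T^h)^{-1}$ on $V^h$ and write $(\lambda_i^k-A^h)Q_s^h u_i^k$ in terms of $g_i^k$ directly: $A^h g_i^k = A^h(\mu_i^k u_i^k-T^h u_i^k)=\mu_i^k A^h u_i^k - u_i^k=\mu_i^k\lambda_i^k u_i^k-u_i^k=0$... that says $g_i^k\in\ker A^h=\{0\}$ which is false, so I'm mis-stating; the resolution is that $\lambda_i^k$ is the Rayleigh quotient approximation, not an exact eigenvalue, so $A^h u_i^k\ne\lambda_i^k u_i^k$ in general. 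The honest computation: $(\lambda_i^k - A^h)Q_s^h u_i^k$, apply $T^h$: $T^h(\lambda_i^k-A^h)Q_s^h u_i^k=\lambda_i^k T^h Q_s^h u_i^k - Q_s^h u_i^k$, and $T^h Q_s^h u_i^k=Q_s^h T^h u_i^k=Q_s^h(\mu_i^k u_i^k-g_i^k)=\mu_i^k Q_s^h u_i^k-Q_s^h g_i^k$, so $T^h(\lambda_i^k-A^h)Q_s^h u_i^k=-\lambda_i^k Q_s^h g_i^k$; since $T^h|_{U_s^h}$ has eigenvalues $\mu_j^h\ge\mu_s^h>0$, we get $\|(\lambda_i^k-A^h)Q_s^h u_i^k\|_b\le\lambda_s^h\lambda_i^k\|Q_s^h g_i^k\|_b\le C\|g_i^k\|_b\le C\|g_i^k\|_a/\sqrt{\lambda_1}$, which is the claim. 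The main obstacle is keeping track of which norms ($a$ versus $b$) and which operator bounds are sharp enough — in particular avoiding the trap of bounding $\lambda_i^k-A^h$ on $U_{s+1}^h$ by its (large) operator norm, and instead always routing through $T^h$ where the spectral gaps act in the favorable direction.
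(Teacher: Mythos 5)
Your derivations of \eqref{Relationu1e2_equlity}, \eqref{Relationu1e2_inequlity} and \eqref{Relationu1e2_inequlity2} are correct and essentially the paper's own arguments: the identity comes from the $a$- and $b$-orthogonal splitting $u_i^k=Q_s^hu_i^k+Q_{s+1}^hu_i^k$ together with $a(u_i^k,u_i^k)=\lambda_i^k$, $\|u_i^k\|_b=1$; the lower bound in \eqref{Relationu1e2_inequlity} is exactly read off from the identity (your instinct that Courant--Fischer alone does not give $Rq(Q_s^hu_i^k)\le\lambda_i^k$ is right, and your fallback is what the paper does); the upper bound uses $\|Q_{s+1}^hu_i^k\|_a=\|Q_{s+1}^hP_{U^k}u_i^k\|_a\le\theta_a^k\sqrt{\lambda_i^k}\le CH$ via Corollary \ref{Lemma_theta_ak} and Lemma \ref{Lemma_Theta_EnergyL2}; and for \eqref{Relationu1e2_inequlity2} your route through $T^h$, giving $T^h\bigl((\lambda_i^k-A^h)Q_s^hu_i^k\bigr)=-\lambda_i^kQ_s^hg_i^k$ and then inverting $T^h$ on $U_s^h$ where its smallest eigenvalue is $\mu_s^h$, is the same mechanism as the paper's (which writes it with $(A^h)^{1/2}$ and ends at $\lambda_s^h(\lambda_i^k)^2\|Q_s^hg_i^k\|_a^2$). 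You also correctly sidestepped the trap of bounding $\lambda_i^k-A^h$ on $U_{s+1}^h$ by its operator norm.

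The one genuine gap is in your proof of \eqref{Relationu1e2_inequlity3}. You feed $\|Q_{s+1}^hu_i^k\|_a\le CH$ and $\|Q_{s+1}^hu_i^k\|_b\le C\|g_i^k\|_a$ into the claim
$\|e_{i,s+1}^k\|_{E_i^k}^2\le\|Q_{s+1}^hu_i^k\|_a\,\|Q_{s+1}^hu_i^k\|_b$, but this mixed-norm inequality is never justified and is false as an inequality on $U_{s+1}^h$: for $e=u_j^h$ with $j>s$ one has $\|e\|_{E_i^k}^2=\lambda_j^h-\lambda_i^k$ while $\|e\|_a\|e\|_b=\sqrt{\lambda_j^h}$, and the former exceeds the latter once $\lambda_j^h$ is large. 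Two repairs are available. The paper's: since $a(g_i^k,u_i^k)=0$ and $P_{U^k}g_i^k=0$, one has $\|e_{i,s+1}^k\|_{E_i^k}^2=-\lambda_i^ka(g_i^k,Q_s^hu_i^k)=-\lambda_i^ka(Q_s^hP_\perp^kg_i^k,Q_s^hu_i^k)\le C\theta_a^k\|g_i^k\|_a\|Q_s^hu_i^k\|_a\le CH\|g_i^k\|_a$, using $\interleave Q_s^hP_\perp^k\interleave_a=\theta_a^k$ from Lemma \ref{Lemma_Theta_EnergyL2}. Alternatively, stay entirely in the $a$-norm: your own spectral-separation argument, run with $a(\cdot,\cdot)$ instead of $b(\cdot,\cdot)$, gives $\|Q_{s+1}^hg_i^k\|_a\ge(\mu_i^k-\mu_{s+1}^h)\|Q_{s+1}^hu_i^k\|_a$, hence $\|Q_{s+1}^hu_i^k\|_a\le C\|g_i^k\|_a$ (the factor $(\mu_i^k-\mu_{s+1}^h)^{-1}$ is bounded by \eqref{equation_EigenvalueRelation} and Assumption \ref{Assumption1}, with a constant depending only on $d_s^+$); then $\|e_{i,s+1}^k\|_{E_i^k}^2\le\|Q_{s+1}^hu_i^k\|_a^2\le CH\cdot C\|g_i^k\|_a$. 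As written, though, the step is not valid.
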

\begin{proof}
We first prove \eqref{Relationu1e2_equlity} and \eqref{Relationu1e2_inequlity}. Using \eqref{UpdateRayleighRitz}, we have $a(u_{i}^{k},u_{i}^{k})=\lambda_{i}^{k}b(u_{i}^{k},u_{i}^{k})$. Therefore,
\begin{align}
&\ \ \ \ ||e_{i,s+1}^{k}||_{E_{i}^{k}}^{2}
=b((\lambda_{i}^{k}-A^{h})Q_{s}^{h}u_{i}^{k},Q_{s}^{h}u_{i}^{k})
=(\lambda_{i}^{k}-Rq(Q_{s}^{h}u_{i}^{k}))||Q_{s}^{h}u_{i}^{k}||_{b}^{2},  \label{RelationErrorEigen}
\end{align}
which means that \eqref{Relationu1e2_equlity} holds and $\lambda_{i}^{k}-Rq(Q_{s}^{h}u_{i}^{k})\geq 0$. By Lemma \ref{Lemma_theta_bk} and Lemma \ref{Lemma_Theta_EnergyL2}, we have
\begin{align}
&\ \ \ \ (\lambda_{i}^{k}-Rq(Q_{s}^{h}u_{i}^{k}))||Q_{s}^{h}u_{i}^{k}||_{b}^{2}=
(\lambda_{i}^{k}-Rq(Q_{s}^{h}u_{i}^{k}))(1-||Q_{s+1}^{h}Q_{U^{k}}u_{i}^{k}||_{b}^{2})\notag\\
&\geq (\lambda_{i}^{k}-Rq(Q_{s}^{h}u_{i}^{k}))(1-(\theta^{k}_{b})^{2}||u_{i}^{k}||_{b}^{2})
\geq (\lambda_{i}^{k}-Rq(Q_{s}^{h}u_{i}^{k}))(1-CH^{2}).\label{equation_lowerestimate}
\end{align}
By \eqref{RelationErrorEigen}, Corollary \ref{Lemma_theta_ak} and Lemma \ref{Lemma_Theta_EnergyL2}, we obtain
\begin{align}
&\ \ \ \ (\lambda_{i}^{k}-Rq(Q_{s}^{h}u_{i}^{k}))||Q_{s}^{h}u_{i}^{k}||_{b}^{2}
=||e_{i,s+1}^{k}||_{E_{i}^{k}}^{2}
\leq ||Q_{s+1}^{h}u_{i}^{k}||_{a}^{2}
= ||Q_{s+1}^{h}P_{U^{k}}u_{i}^{k}||_{a}^{2}\leq CH^{2}.\label{align_Lemma_upperbound}
\end{align}
Using \eqref{equation_lowerestimate} and \eqref{align_Lemma_upperbound}, we get \eqref{Relationu1e2_inequlity}.
\par By Corollary \ref{Lemma_theta_ak}, Lemma \ref{Lemma_Theta_EnergyL2}, and the facts $P_{\perp}^{k}g_{i}^{k}=g_{i}^{k}$ and $r_{i}^{k}=-\lambda_{i}^{k}A^{h}g_{i}^{k}$, we deduce
\begin{align*}
(\lambda_{i}^{k}-Rq(Q_{s}^{h}u_{i}^{k}))||Q_{s}^{h}u_{i}^{k}||_{b}^{2}
&=b((\lambda_{i}^{k}-A^{h})Q_{s}^{h}u_{i}^{k},Q_{s}^{h}u_{i}^{k})
=-\lambda_{i}^{k}a(g_{i}^{k},Q_{s}^{h}u_{i}^{k})\notag\\
&=-\lambda_{i}^{k}a(Q_{s}^{h}P_{\perp}^{k}g_{i}^{k},Q_{s}^{h}u_{i}^{k})
\leq C\theta_{a}^{k}||g_{i}^{k}||_{a}||Q_{s}^{h}u_{i}^{k}||_{a}\leq CH||g_{i}^{k}||_{a},
\end{align*}
which, together with \eqref{equation_lowerestimate}, yields \eqref{Relationu1e2_inequlity3}.
\par Since $A^{h}|_{U_{s}^{h}}:U_{s}^{h}\to U_{s}^{h}$ is a linear isomorphism, we know that
there exists an unique $v_{i,s}^{k}\in U_{s}^{h}$ such that $A^{h}v_{i,s}^{k}=(\lambda_{i}^{k}-A^{h})Q_{s}^{h}u_{i}^{k}.$
Accordingly,
\begin{align*}
&\ \ \ \ ||(\lambda_{i}^{k}-A^{h})Q_{s}^{h}u_{i}^{k}||_{b}^{2}
=b(A^{h}(A^{h})^{\frac{1}{2}}v_{i,s}^{k},(A^{h})^{\frac{1}{2}}v_{i,s}^{k})
\leq \lambda_{s}^{h}a(v_{i,s}^{k},v_{i,s}^{k})\\
&=\lambda_{s}^{h}(\lambda_{i}^{k})^{2}||(\mu_{i}^{k}-T^{h})Q_{s}^{h}u_{i}^{k}||_{a}^{2}
=\lambda_{s}^{h}(\lambda_{i}^{k})^{2}||Q_{s}^{h}g_{i}^{k}||^{2}_{a}
\leq \lambda_{s}^{h}(\lambda_{i}^{k})^{2}||g_{i}^{k}||_{a}^{2}.
\end{align*}
This leads to \eqref{Relationu1e2_inequlity2}. \qed
\end{proof}
\section{Convergence analysis}
\par In this section, we focus on giving a rigorous convergence analysis for the two-level BPJD method. We first present the main theoretical result in this paper. The rest of this section is organized as follows: In subsection 5.1, by choosing a suitable coarse component and using some overlapping DD techniques, we deduce the error reduction from $e_{i,s+1}^{k}$ to $\widetilde{e}_{i,s+1}^{k+1}$ ($\widetilde{e}_{i,s+1}^{k+1}:=-Q_{s+1}^{h}\widetilde{u}_{i}^{k+1}$, where $\widetilde{u}_{i}^{k+1}$ shall be defined in \eqref{uik1}, $i=1,2,...,s$). In subsection 5.2, by constructing an auxiliary eigenvalue problem in span$\{\widetilde{u}_{i}^{k+1}\}_{i=1}^{s}$ which shall be presented in \eqref{AuxiliaryEigenvalueProblem}, we may establish the total error reduction of the first $s$ eigenvalues.

\begin{theorem}\label{MainResult}
Assume that Assumption \ref{Assumption1} and Assumption \ref{Assumption2} hold, then
\begin{equation}\label{Total_Theorem}
\sum_{i=1}^{s}(\lambda_{i}^{k+1}-\lambda_{i}^{h})\leq \gamma \sum_{i=1}^{s}(\lambda_{i}^{k}-\lambda_{i}^{h}),
\end{equation}
and
\begin{equation}\label{Hausdorffa_Theorem}
(\theta^{k}_{a})^{2}\leq C\gamma^{k},
\end{equation}
\begin{equation}\label{Hausdorffb_Theorem}
(\theta^{k}_{b})^{2}\leq C\gamma^{k}.
\end{equation}
Here $\gamma=c(H)(1-C\frac{\delta^{2m-1}}{H^{2m-1}})^{2},\ m=1,2$. The constant $C$ is independent of $h,\ H,\ \delta$ and the left gaps of eigenvalues $\{\lambda_{i}\}_{i=2}^{s}$, and the $H$-dependent constant $c(H)\ (=1+\frac{CH}{(1-C\frac{\delta^{2m-1}}{H^{2m-1}})^{2}})$ decreases monotonically to $1$, as $H\to 0$.
\end{theorem}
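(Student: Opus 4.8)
The plan is to prove the three estimates in Theorem \ref{MainResult} by first establishing a one-step contraction of the ``$E_i^k$-energy'' of the error components $e_{i,s+1}^k = -Q_{s+1}^h u_i^k$, and then transferring this to the eigenvalue sum via an auxiliary eigenvalue problem. The guiding identity is \eqref{Relationu1e2_equlity}, which, combined with \eqref{Relationu1e2_inequlity}, shows that up to the factor $\|Q_s^h u_i^k\|_b^2 = 1+O(H^2)$ the quantity $\|e_{i,s+1}^k\|_{E_i^k}^2$ is comparable to $\lambda_i^k - Rq(Q_s^h u_i^k)$, which in turn (after the $c(H)$-type correction) controls $\lambda_i^k - \lambda_i^h$. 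So the whole theorem reduces to showing $\sum_i \|\widetilde e_{i,s+1}^{k+1}\|_{\star}^2 \le (1 - C\delta^{2m-1}/H^{2m-1})^2 \sum_i \|e_{i,s+1}^k\|_{\star}^2$ for the appropriate energy norm, where $\widetilde u_i^{k+1}$ is the (un-Ritz'd) update built from $t_i^{k+1}$, and then paying an $O(H)$ multiplicative price when passing from $\widetilde u_i^{k+1}$ back to the Rayleigh--Ritz eigenpairs $(\lambda_i^{k+1}, u_i^{k+1})$ in $W^{k+1}$.

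\textbf{Step 1 (single-step error reduction in subsection 5.1).} I would define $\widetilde u_i^{k+1} := u_i^k + t_i^{k+1}$ (this is the $\widetilde u_i^{k+1}$ of \eqref{uik1}) and analyze $\widetilde e_{i,s+1}^{k+1} = -Q_{s+1}^h \widetilde u_i^{k+1}$. The key is that $t_i^{k+1}$ solves the preconditioned correction equation with preconditioner $(B_i^k)^{-1}$ from \eqref{Preconditioner}, which is an additive Schwarz operator (one coarse solve restricted to $U_{s+1}^H$, plus local solves on $V^{(l)}$). The standard additive Schwarz machinery — stable decomposition of any $w \in U_{s+1}^h$ into a coarse part in $U_{s+1}^H$ plus local parts, using the partition of unity $\{\theta_l\}$ of \eqref{unitypartition}, Lemma \ref{Lemma_SmallOvelapping}, Lemma \ref{Lemma_strengthenedCauchySchwarzinequality}, and the spectral bounds \eqref{EigenvalueLocal}, \eqref{EigenvalueCoarse} — gives a condition-number-type bound that produces the factor $1 - C\delta^{2m-1}/H^{2m-1}$ in the contraction. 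Here I must be careful that the coarse correction lives in $U_{s+1}^H$ (not all of $V^H$), which is exactly why the bounds in Lemma \ref{Lemma_QUsQs1} and estimates \eqref{Equation_QsHQHQs1hvh}, \eqref{Equation_Qs1HQHQshvh} are needed: they control the ``leakage'' $Q_s^h Q_{s+1}^H$ and $Q_s^H Q^H Q_{s+1}^h$ that contaminates the decomposition, and each such leakage only costs $O(H)$ or $O(H^2)$, which gets absorbed into $c(H)$. Because everything is done componentwise in $i$ and then summed, and because the gap between $\lambda_s^h$ and $\lambda_{s+1}^h$ (Assumption \ref{Assumption1}) is the only spectral gap used — never the internal gaps $d_i^-$ — the constant $C$ comes out independent of those internal gaps; this is the cluster-robustness.

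\textbf{Step 2 (from $\widetilde u_i^{k+1}$ to the Ritz values, subsection 5.2).} The vectors $\widetilde u_i^{k+1}$ need not be $b$-orthonormal or even linearly independent in a controlled way, so I would introduce the auxiliary eigenvalue problem \eqref{AuxiliaryEigenvalueProblem} posed on $\mathrm{span}\{\widetilde u_i^{k+1}\}_{i=1}^s$: solve $a(\cdot,\cdot) = \widetilde\lambda_i b(\cdot,\cdot)$ there, obtaining eigenpairs whose eigenvalue sum is, by Courant--Fischer applied inside $W^{k+1} \supset \mathrm{span}\{\widetilde u_i^{k+1}\}$, an upper bound for $\sum_i \lambda_i^{k+1}$. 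Then one bounds $\sum_i(\widetilde\lambda_i - \lambda_i^h)$ in terms of $\sum_i \|\widetilde e_{i,s+1}^{k+1}\|_{E_i^?}^2$ using the same Rayleigh-quotient identity as in Lemma \ref{Lemma_Relationu1e2} but now for the tilde objects; combining with Step 1 and Lemma \ref{Lemma_theta_bk}/Corollary \ref{Lemma_theta_ak} (which convert between the eigenvalue sum and $(\theta_b^k)^2$, $(\theta_a^k)^2$) closes the loop and yields \eqref{Total_Theorem} with $\gamma = c(H)(1 - C\delta^{2m-1}/H^{2m-1})^2$. The estimates \eqref{Hausdorffa_Theorem}, \eqref{Hausdorffb_Theorem} then follow by iterating \eqref{Total_Theorem} $k$ times and invoking \eqref{EstimateThetbb}, \eqref{EstimateThetaa} together with the $k=0$ bound $(\theta^0_{a/b})^2 \le CH^2 \le C$ from Step 1 in Algorithm 3.1.

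\textbf{Main obstacle.} The hard part is Step 1 done uniformly in the cluster: the usual additive Schwarz analysis for a \emph{single} eigenvalue (as in \cite{WX1,WX2,LQGXXJ}) uses scalar error vectors and the simple eigenvalue's spectral gap, whereas here the ``error space'' $\mathrm{span}\{e_{i,s+1}^k\}_{i=1}^s$ is multi-dimensional, the natural distance is the gap $\theta$ of Definition \ref{Definition_gap} rather than a vector norm, and one must build a \emph{stable decomposition of this whole error subspace} into coarse ($U_{s+1}^H$) plus local pieces while keeping all constants free of $d_i^-$. Controlling the interaction between the block structure (the projector $I - Q_{U^k}$ in \eqref{Preconditioneri}) and the additive Schwarz splitting — i.e., showing that projecting out $U^k$ does not spoil the stable decomposition — is where the technical estimates \eqref{Equation_QushQs1H}, \eqref{Relationu1e2_inequlity2} and the norm bounds of Lemma \ref{Lemma_NormQs+1h} all have to be orchestrated together. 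I expect this to be the bulk of the work, with Step 2 being comparatively routine once the auxiliary problem is set up correctly.
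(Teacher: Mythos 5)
Your proposal follows essentially the same route as the paper: auxiliary vectors $\widetilde u_i^{k+1}$ built from the preconditioned corrections, a one-step contraction of $||e_{i,s+1}^k||_{E_i^k}$ via a stable decomposition of $U_{s+1}^h$ into a coarse part in $U_{s+1}^H$ plus local parts (with the projection ``leakage'' controlled by Lemma \ref{Lemma_QUsQs1} and absorbed into $c(H)$ and an additional error term), followed by the auxiliary eigenvalue problem \eqref{AuxiliaryEigenvalueProblem}, Courant--Fischer, and the Rayleigh-quotient identity of Lemma \ref{Lemma_Relationu1e2} to convert back to the eigenvalue sum; the gap estimates \eqref{Hausdorffa_Theorem}--\eqref{Hausdorffb_Theorem} then follow by iterating \eqref{Total_Theorem} through Lemma \ref{Lemma_theta_bk} and Corollary \ref{Lemma_theta_ak}, exactly as in the paper.

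Two points need repair. First, you set $\widetilde u_i^{k+1}=u_i^k+t_i^{k+1}$ and call it ``the $\widetilde u_i^{k+1}$ of \eqref{uik1}'', but \eqref{uik1} reads $u_i^k+\alpha_i^k t_i^{k+1}$ with a damping parameter $\alpha_i^k$ depending on $N_0$. This is not cosmetic: with $\alpha_i^k=1$ the principal error operator becomes $I-\sum_{l=0}^{N}T_{l,i}^k$, and since the additive Schwarz operator has largest eigenvalue of order $N_0$, this operator is not a contraction in $||\cdot||_{E_i^k}$ and the key bound of Theorem \ref{Gkv2theorem} fails. The paper's Lemma \ref{Lemma_SymmetricPositive} takes $\alpha_i^k$ below $1/(\beta(\lambda_{s+1}^H)+N_0\max_l\beta(\lambda_{1,l}^h))$ precisely so that $G_i^k$ is positive definite, after which the stable decomposition supplies the lower spectral bound yielding $1-C\delta/H$; because $\widetilde u_i^{k+1}$ is only an analysis vector inside $W^{k+1}$, this damping costs nothing algorithmically, but it must appear in the proof. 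Second, your Step 2 is not ``comparatively routine'': the sum of the Ritz values of \eqref{AuxiliaryEigenvalueProblem} is not the sum of the Rayleigh quotients $Rq(\widetilde u_i^{k+1})$ of the (non-orthogonal) basis vectors, and bounding $\sum_i(\hat\lambda_i^{k+1}-Rq(\widetilde u_i^{k+1}))$ requires the matrix trace identity of Lemma \ref{Lemma_TraceEquality} together with Frobenius-norm estimates of the perturbations of the Gram matrices; this is the content of Lemma \ref{Lemma_AuxiliaryProblem} and is a genuine piece of the argument your outline leaves unaccounted for.
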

\begin{remark}
In order to make the main idea of the proof of Theorem \ref{MainResult} clear, we only consider the model problem \eqref{Laplacian}. The proof for problem \eqref{Biharmonic} is similar.
\end{remark}
\par  For the convenience of the following convergence analysis, we first choose some special functions defined as
\begin{equation}\label{uik1}
\widetilde{u}_{i}^{k+1}:=u_{i}^{k}+\alpha_{i}^{k}t_{i}^{k+1}\in U^{k}+\text{span}\{t_{i}^{k+1}\}_{i=1}^{s}\subset W^{k+1},\ \ i=1,2,...,s,
\end{equation}
to analyze the error reduction, where $\alpha_{i}^{k}\ (i=1,2,...,s)$ are some undetermined parameters dependent on $N_{0}$. From \eqref{Preconditioneri} and \eqref{uik1}, we know
\begin{equation}\label{Equation_construction_utilde}
\widetilde{u}_{i}^{k+1}=u_{i}^{k}+\alpha_{i}^{k}Q_{\perp}^{k}(B_{i}^{k})^{-1}r_{i}^{k},\ \ i=1,2,...,s,
\end{equation}
which are linearly independent. So we may construct an auxiliary eigenvalue problem in span$\{\widetilde{u}_{i}^{k+1}\}_{i=1}^{s}$:
\begin{equation}\label{AuxiliaryEigenvalueProblem}
a(\hat{u}_{i}^{k+1},v)=\hat{\lambda}_{i}^{k+1}b(\hat{u}_{i}^{k+1},v)\ \ \ \ \forall\ v \in \widetilde{U}^{k+1}:=\text{span}\{\widetilde{u}_{i}^{k+1}\}_{i=1}^{s}.
\end{equation}
Since $\widetilde{U}^{k+1}\subset W^{k+1}$, it is easy to see that $\lambda_{i}^{k+1}\leq \hat{\lambda}_{i}^{k+1}$.
\par The idea of the proof of Theorem $\ref{MainResult}$ is to design an auxiliary eigenvalue problem \eqref{AuxiliaryEigenvalueProblem}. By the `bridge' term $\mathcal{G}(\widetilde{U}^{k+1})-\mathcal{G}(U_{s}^{h})$, we may obtain the error reduction from  $\mathcal{G}(U^{k})-\mathcal{G}(U_{s}^{h})$ to $\mathcal{G}(U^{k+1})-\mathcal{G}(U_{s}^{h})$, i.e., by the `bridge' term $\sum_{i=1}^{s}(\hat{\lambda}_{i}^{k+1}-\lambda_{i}^{h})$, we may obtain the total error reduction from  $\sum_{i=1}^{s}(\lambda_{i}^{k}-\lambda_{i}^{h})$ to $\sum_{i=1}^{s}(\lambda_{i}^{k+1}-\lambda_{i}^{h})$.

\subsection{The error from the new DD preconditioner}
\par The block-version Jacobi-Davidson correction equations \eqref{Equation_JacobiDavidsonCorrtion} are solved inexactly, i.e., $$t_{i}^{k+1}=Q_{\perp}^{k}(B_{i}^{k})^{-1}r_{i}^{k}.$$
From \eqref{Equation_construction_utilde}, we first analyze the error reduction from $e_{i,s+1}^{k}(:=-Q_{s+1}^{h}u_{i}^{k})$ to $\widetilde{e}_{i,s+1}^{k+1}(:=-Q_{s+1}^{h}\widetilde{u}_{i}^{k+1})$. The orthogonal projection $-Q_{s+1}^{h}$ is applied to both sides of \eqref{Equation_construction_utilde}, we obtain
\begin{equation}\label{DefineEis1k}
\widetilde{e}_{i,s+1}^{k+1}=e_{i,s+1}^{k}-\alpha_{i}^{k}Q_{s+1}^{h}Q_{\perp}^{k}(B_{i}^{k})^{-1}r_{i}^{k}.
\end{equation}
Moreover, by the splitting of the identity operator on $V^{h}$ corresponding to \eqref{VhDecomposition}, we deduce
\begin{align}
\widetilde{e}_{i,s+1}^{k+1}
&=e_{i,s+1}^{k}-\alpha_{i}^{k}Q_{s+1}^{h}(B_{i}^{k})^{-1}r_{i}^{k}+\alpha_{i}^{k}Q_{s+1}^{h}Q_{U^{k}}(B_{i}^{k})^{-1}r_{i}^{k}\notag\\
&=e_{i,s+1}^{k}-\alpha_{i}^{k}Q_{s+1}^{h}(B_{i}^{k})^{-1}(\lambda_{i}^{k}-A^{h})(Q_{s}^{h}u_{i}^{k}-
e_{i,s+1}^{k})+\alpha_{i}^{k}Q_{s+1}^{h}Q_{U^{k}}(B_{i}^{k})^{-1}r_{i}^{k}\notag\\
&=\{e_{i,s+1}^{k}+\alpha_{i}^{k}Q_{s+1}^{h}(B_{i}^{k})^{-1}(\lambda_{i}^{k}-A^{h})e_{i,s+1}^{k}\}
+\alpha_{i}^{k}\{Q_{s+1}^{h}(B_{i}^{k})^{-1}(A^{h}-\lambda_{i}^{k})Q_{s}^{h}u_{i}^{k}\notag\\
&\ \ \ \ +Q_{s+1}^{h}Q_{U^{k}}(B_{i}^{k})^{-1}r_{i}^{k}\}=:I^{k}_{1,i}+I^{k}_{2,i}.\label{SumI1ikI2ik}
\end{align}
\par For simplicity, we define $G_{i}^{k}:=I+\alpha_{i}^{k}Q_{s+1}^{h}(B_{i}^{k})^{-1}(\lambda_{i}^{k}-A^{h})$. It is easy to see that $I^{k}_{1,i}=G_{i}^{k}e_{i,s+1}^{k}$. In this paper, we call $I^{k}_{1,i}$ the principal error term and $G_{i}^{k}:U_{s+1}^{h}\to U_{s+1}^{h}$ the principal error operator. Meanwhile, we call $I^{k}_{2,i}$ the additional error term.

\subsubsection{Estimate of the principal error term $I^{k}_{1,i}$}
\par In this subsection, we shall use the theory of the two-level domain decomposition method to analyze the principal error term $I^{k}_{1,i}$. Actually, we only need to estimate the spectral radius of the principal error operator $G_{i}^{k}$.
\begin{theorem}\label{Gkv2theorem}
Let Assumption \ref{Assumption1} and Assumption \ref{Assumption2} hold. Then for sufficiently small $\alpha_{i}^{k}$,
\begin{equation}\label{Gkv2hestimate}
||G_{i}^{k}v||_{E_{i}^{k}}\leq (1-C\frac{\delta}{H})||v||_{E_{i}^{k}}\ \ \ \ \ \forall \ v\in U_{s+1}^{h},\ \ i=1,2,...,s.
\end{equation}
\end{theorem}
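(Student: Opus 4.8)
The plan is to recognize $G_i^k$ as the error propagation operator of a two-level additive Schwarz preconditioner applied to the operator $B_i^k = A^h - \lambda_i^k$ restricted to $U_{s+1}^h$, and then to invoke the classical Schwarz theory. First I would observe that on $U_{s+1}^h$ the operator $B_i^k$ is symmetric positive definite with respect to $b(\cdot,\cdot)$ (this is exactly the statement that $(\cdot,\cdot)_{E_i^k}$ is an inner product on $U_{s+1}^h$, recorded just before Lemma~\ref{Lemma_NormQs+1h}), so $\|G_i^k v\|_{E_i^k}$ is the natural energy norm in which to measure the iteration. Writing $(B_i^k)^{-1} = (B_{0,i}^k)^{-1} Q_{s+1}^H Q^H + \sum_{l=1}^N (B_{l,i}^k)^{-1} Q^{(l)}$ from \eqref{Preconditioner}, I would identify the coarse space $V_0 := Q_{s+1}^H V^H = U_{s+1}^H$ together with the local spaces $V^{(l)}$ as a stable decomposition of $U_{s+1}^h$; the factor $Q_{s+1}^h$ in front projects everything back into $U_{s+1}^h$, so $G_i^k$ really is $I - \alpha_i^k Q_{s+1}^h (B_i^k)^{-1} B_i^k$ acting on $U_{s+1}^h$, i.e. a damped additive Schwarz iteration.

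The key steps, in order, would be: (1) verify the \emph{stable decomposition} property — every $v \in U_{s+1}^h$ can be written as $v = v_0 + \sum_{l=1}^N v_l$ with $v_0 \in U_{s+1}^H$, $v_l \in V^{(l)}$ and $(v_0,v_0)_{E_i^k} + \sum_l (v_l,v_l)_{E_i^k} \le C_0^2 (v,v)_{E_i^k}$, where $C_0^2 = C(1 + H/\delta)$ comes from the partition-of-unity functions $\{\theta_l\}$ in \eqref{unitypartition}, Lemma~\ref{Lemma_SmallOvelapping} on the small-overlap strips, and the spectral equivalence $\beta(\lambda_{s+1}^h)$ between $\|\cdot\|_{E_i^k}$ and $\|\cdot\|_a$ on $U_{s+1}^h$; the coarse correction $v_0 = Q_{s+1}^H Q^H v$ (or an interpolant thereof) is what makes $C_0$ independent of $h$; (2) verify \emph{local stability} — $(B_{l,i}^k)^{-1}$ restricted to $V^{(l)}$ is a genuine inverse of $B_{l,i}^k$ there, which follows once we check $A^{(l)} - \lambda_i^k > 0$ on $V^{(l)}$ (true since $\lambda_{\min}(B_{l,i}^k) = O(H^{-2m})$ dominates $\lambda_i^k$ by \eqref{EigenvalueLocal}) and similarly $B_{0,i}^k|_{U_{s+1}^H} > 0$ with $\lambda_{\min} = \lambda_{s+1}^H - \lambda_i^k > 0$ by \eqref{equation_EigenvalueRelation} and \eqref{EigenvalueCoarse}; (3) verify the \emph{strengthened Cauchy–Schwarz / finite coloring} bound, which is exactly Lemma~\ref{Lemma_strengthenedCauchySchwarzinequality} with spectral radius $\rho(\Lambda) \le N_0$; (4) assemble these into the standard additive Schwarz estimate: the preconditioned operator $P := Q_{s+1}^h (B_i^k)^{-1} B_i^k$ satisfies $C_0^{-2} (v,v)_{E_i^k} \le (Pv,v)_{E_i^k} \le (N_0+1)(v,v)_{E_i^k}$, hence for $\alpha_i^k$ chosen (depending only on $N_0$) so that $0 < \alpha_i^k < 2/(N_0+1)$, the damped iteration $G_i^k = I - \alpha_i^k P$ satisfies $\|G_i^k\|_{E_i^k} \le 1 - \alpha_i^k C_0^{-2} = 1 - C\alpha_i^k/(1 + H/\delta) \le 1 - C\delta/H$, using $\delta \le H$ to absorb constants.

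The main obstacle I expect is step (1), the stable decomposition, and in particular keeping the coarse space honest. The subtlety is that the natural coarse correction lives in $V^H$, but here the relevant quadratic form is $(\cdot,\cdot)_{E_i^k}$ on $U_{s+1}^h$, so one must compose the usual $H^1$-stable interpolation/quasi-interpolation onto $V^H$ with the projector $Q_{s+1}^H$, and then control the ``pollution'' caused by the mismatch between $U_{s+1}^h$ and $U_{s+1}^H$ — precisely the quantities estimated in Lemma~\ref{Lemma_QUsQs1}, equations \eqref{Equation_QsHQHQs1hvh}–\eqref{Equation_Qs1HQHQshvh}, which bound things like $\|Q_s^H Q^H Q_{s+1}^h v\|_b \le CH^2\|v\|_b$. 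One has to show these $O(H^2)$ (or $O(H)$ in the $a$-norm) cross terms are lower-order and do not spoil the $C_0^2 = C(1+H/\delta)$ bound; this is where the cluster-robustness matters, since all constants must be shown independent of the internal gaps among $\lambda_1,\dots,\lambda_s$ and of $\lambda_{s+1}^h - \lambda_i^k$ being possibly small is \emph{not} an issue here only because Assumption~\ref{Assumption1} guarantees the gap $\lambda_{s+1} - \lambda_s$ is bounded below. A secondary technical point is making the dependence of $\alpha_i^k$ on $N_0$ (and nothing else) explicit, so that the same damping works uniformly for all $i = 1,\dots,s$ and all $k$.
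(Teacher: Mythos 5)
Your proposal is correct and follows essentially the same route as the paper: the paper proves exactly the stable decomposition you describe (Lemma \ref{Lemma_StableDecomposition}, with coarse component $w_{0}=Q_{s+1}^{H}Q^{H}v$, local components $I^{h}(\theta_{l}(v-w_{0}))$, and the $O(H^{2})$ cross-term estimate \eqref{Equation_QsHQHQs1hvh} controlling $\|v-w_{0}\|_{b}$), establishes symmetry and the upper spectral bound of the projected Schwarz operator via Lemma \ref{Lemma_SymmetricPositive} and Lemma \ref{Lemma_strengthenedCauchySchwarzinequality}, and then converts the quadratic-form bound $(G_{i}^{k}v,v)_{E_{i}^{k}}\leq(1-C\delta/H)(v,v)_{E_{i}^{k}}$ into the norm bound using positive definiteness of $G_{i}^{k}$. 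The only minor difference is quantitative: the paper's upper bound (and hence the admissible range of $\alpha_{i}^{k}$) involves $\beta(\lambda_{s+1}^{H})+N_{0}\max_{l}\beta(\lambda_{1,l}^{h})$ rather than $N_{0}+1$ alone, i.e.\ the damping threshold is not purely $N_{0}$-dependent, though these $\beta$-factors are bounded constants so the conclusion is unchanged.
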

\par First of all, we give two useful lemmas. The first lemma (Lemma \ref{Lemma_SymmetricPositive}) illustrates that the principal error operator $G_{i}^{k}:U_{s+1}^{h}\to U_{s+1}^{h}$ is symmetric and positive definite with respect to $(\cdot,\cdot)_{E_{i}^{k}}$. The second lemma (Lemma \ref{Lemma_StableDecomposition}) gives a stable spacial decomposition for the error subspace $U_{s+1}^{h}$ instead of the whole space $V^{h}$. Hence, the constructions of both coarse component and local fine components in this paper are different from those in \cite{ToselliM}.
\begin{lemma}\label{Lemma_SymmetricPositive}
Under the same assumptions as in Theorem \ref{Gkv2theorem}, for any $i\ (i=1,2,...,s)$, the operator $G_{i}^{k}:U_{s+1}^{h}\to U_{s+1}^{h}$ is symmetric with respect to $(\cdot,\cdot)_{E_{i}^{k}}$. Furthermore, if $\alpha_{i}^{k}$ is sufficiently small, the operator $G_{i}^{k}:U_{s+1}^{h}\to U_{s+1}^{h}$ is positive definite.
\end{lemma}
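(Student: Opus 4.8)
The plan is to deduce everything from one structural fact: the preconditioner $(B_i^k)^{-1}$ of \eqref{Preconditioner} is self-adjoint and positive semidefinite with respect to $b(\cdot,\cdot)$ on $V^h$. Granting this, the $(\cdot,\cdot)_{E_i^k}$-symmetry of $G_i^k=I+\alpha_i^kQ_{s+1}^h(B_i^k)^{-1}(\lambda_i^k-A^h)$ is bookkeeping with projectors. The two ingredients are that $Q_{s+1}^h$ is self-adjoint with respect to both $a(\cdot,\cdot)$ and $b(\cdot,\cdot)$, hence with respect to the bilinear form $(\cdot,\cdot)_{E_i^k}=a(\cdot,\cdot)-\lambda_i^kb(\cdot,\cdot)$ on $V^h$, and fixes $U_{s+1}^h$; and that $a(x,y)=b(A^hx,y)$. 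Using these, for $v,w\in U_{s+1}^h$ one rewrites
\begin{equation*}
(G_i^kv,w)_{E_i^k}=(v,w)_{E_i^k}-\alpha_i^k\,b\big((A^h-\lambda_i^k)(B_i^k)^{-1}(A^h-\lambda_i^k)v,\,w\big).
\end{equation*}
The first term is symmetric in $v,w$; for the second, writing $L:=A^h-\lambda_i^k$ (which is $b$-self-adjoint) and sliding $L$ and $(B_i^k)^{-1}$ across the $b$-pairing, $b(L(B_i^k)^{-1}Lv,w)=b(Lv,(B_i^k)^{-1}Lw)=b(v,L(B_i^k)^{-1}Lw)$, which is symmetric as soon as $(B_i^k)^{-1}$ is known to be $b$-self-adjoint.

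To establish the structural fact I would argue summand by summand in \eqref{Preconditioner}. For a local term $(B_{l,i}^k)^{-1}Q^{(l)}$: the operator $A^{(l)}$ is $b$-self-adjoint on $V^{(l)}$ by its defining relation $b(A^{(l)}v,w)=a(v,w)$ with $a$ symmetric, and by the scaling estimate \eqref{EigenvalueLocal} — together with the boundedness of $\lambda_i^k$ coming from \eqref{equation_EigenvalueRelation} — the shifted block $B_{l,i}^k=A^{(l)}-\lambda_i^k$ is positive definite on $V^{(l)}$ once $H$ is small enough, so $(B_{l,i}^k)^{-1}$ is well-defined and is $b$-self-adjoint and $b$-positive on $V^{(l)}$. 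Since $Q^{(l)}$ is the $b$-orthogonal projector onto $V^{(l)}$ and the range of $(B_{l,i}^k)^{-1}Q^{(l)}$ lies in $V^{(l)}$, inserting $Q^{(l)}$ on the second argument yields, for all $u,w\in V^h$, both $b((B_{l,i}^k)^{-1}Q^{(l)}u,w)=b(u,(B_{l,i}^k)^{-1}Q^{(l)}w)$ and $b((B_{l,i}^k)^{-1}Q^{(l)}u,u)\ge 0$. The coarse term $(B_{0,i}^k)^{-1}Q_{s+1}^HQ^H$ is handled identically, the only subtlety being that $(B_{0,i}^k)^{-1}$ is ever applied only to elements of $U_{s+1}^H$, on which $B_{0,i}^k$ has least eigenvalue $\lambda_{s+1}^H-\lambda_i^k>0$ by \eqref{EigenvalueCoarse} and \eqref{equation_EigenvalueRelation}, so no positivity of $B_{0,i}^k$ on all of $V^H$ is ever used; sandwiching between the $b$-orthogonal projectors $Q^H$ and $Q_{s+1}^H$ then gives $b$-self-adjointness and $b$-positivity on $V^h$. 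Summing over $l=0,1,\dots,N$ completes the structural fact, hence the self-adjointness of $G_i^k$.

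For positive definiteness I would write $G_i^k=I+\alpha_i^kT_i^k$, $T_i^k:=Q_{s+1}^h(B_i^k)^{-1}(\lambda_i^k-A^h)$, which by the above is self-adjoint on the finite-dimensional inner-product space $(U_{s+1}^h,(\cdot,\cdot)_{E_i^k})$; moreover, taking $w=v$ in the identity above gives $(T_i^kv,v)_{E_i^k}=-b\big((B_i^k)^{-1}Lv,Lv\big)\le 0$, i.e. $T_i^k$ is negative semidefinite. Hence for all $v\in U_{s+1}^h$,
\begin{equation*}
(G_i^kv,v)_{E_i^k}=||v||_{E_i^k}^2+\alpha_i^k(T_i^kv,v)_{E_i^k}\ge\big(1-|\alpha_i^k|\,\interleave T_i^k\interleave_{E_i^k}\big)||v||_{E_i^k}^2,
\end{equation*}
and since $\interleave T_i^k\interleave_{E_i^k}$ is finite, the right side is strictly positive whenever $|\alpha_i^k|<\interleave T_i^k\interleave_{E_i^k}^{-1}$.

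I do not expect a genuine obstacle: the proof is bookkeeping with orthogonal projectors and adjoints. The points that need care are (i) the invertibility and $b$-positivity of the local blocks $B_{l,i}^k$ on $V^{(l)}$ and of the coarse block $B_{0,i}^k$ on $U_{s+1}^H$, for which the spectral estimates \eqref{EigenvalueLocal}, \eqref{EigenvalueCoarse} and the eigenvalue ordering \eqref{equation_EigenvalueRelation} are exactly what is needed, and (ii) the observation that $(B_{0,i}^k)^{-1}$ is composed only with $Q_{s+1}^H$. Making the threshold for $\alpha_i^k$ uniform — depending only on $N_0$ rather than on $h$, $H$, $\delta$ and the current iterate — is a separate and more delicate matter that is not needed here; it is taken care of by the stable-decomposition lemma (Lemma \ref{Lemma_StableDecomposition}) and Theorem \ref{Gkv2theorem}.
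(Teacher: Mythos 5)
Your proof is correct for the lemma as stated, and on the symmetry half it is essentially the paper's argument: the paper likewise drops $Q_{s+1}^{h}$ against $w\in U_{s+1}^{h}$, rewrites $(\cdot,w)_{E_{i}^{k}}$ through $b((A^{h}-\lambda_{i}^{k})\cdot,w)$, and slides $A^{h}-\lambda_{i}^{k}$ and $(B_{i}^{k})^{-1}$ across the $b$-pairing; you merely spell out (correctly, and more carefully than the paper) why the composite preconditioner in \eqref{Preconditioner} is $b$-self-adjoint and positive semidefinite, including the observation that $(B_{0,i}^{k})^{-1}$ only ever acts on $U_{s+1}^{H}$, where it is invertible and leaves the subspace invariant. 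Where you diverge is the positive-definiteness half. The paper introduces the variationally defined operators $T_{0,i}^{k}$ and $T_{l,i}^{k}$ of \eqref{Definition_T0ik}--\eqref{Definition_Tlik}, obtains the identity $(G_{i}^{k}v,v)_{E_{i}^{k}}=\|v\|_{E_{i}^{k}}^{2}-\alpha_{i}^{k}\|T_{0,i}^{k}v\|_{E_{i}^{k}}^{2}-\alpha_{i}^{k}\sum_{l}\|T_{l,i}^{k}v\|_{E_{i}^{k}}^{2}$, and then proves the uniform bounds \eqref{PositiveEstimate4} and \eqref{PositiveEstimate7}, which give the explicit admissible interval $0<\alpha_{i}^{k}<\bigl(\beta(\lambda_{s+1}^{H})+N_{0}\max_{1\leq l\leq N}\beta(\lambda_{1,l}^{h})\bigr)^{-1}$, independent of $h$, $H$, $\delta$ and the iterate. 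Your route — $T_{i}^{k}$ is $E_{i}^{k}$-self-adjoint and negative semidefinite, so $G_{i}^{k}>0$ once $\alpha_{i}^{k}<\interleave T_{i}^{k}\interleave_{E_{i}^{k}}^{-1}$ — is softer and shorter, but it buys less: it gives no control on the threshold, whereas in the paper's architecture $\alpha_{i}^{k}$ must be chosen bounded below by a constant depending only on $N_{0}$ (so that $1-\frac{\alpha_{i}^{k}}{C(1+H/\delta)}\leq 1-C\frac{\delta}{H}$ in Theorem \ref{Gkv2theorem} with mesh-independent constants), and the estimates \eqref{PositiveEstimate4}, \eqref{PositiveEstimate7} are reused later (Remark \ref{remark_sum}, Theorem \ref{Theorem_DDMassociate}). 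Your closing remark slightly misplaces where that uniformity comes from: Lemma \ref{Lemma_StableDecomposition} and the proof of Theorem \ref{Gkv2theorem} supply the coercivity lower bound $(v,v)_{E_{i}^{k}}\leq C(1+\frac{H}{\delta})(Q_{s+1}^{h}\sum_{l\geq0}T_{l,i}^{k}v,v)_{E_{i}^{k}}$, not the uniform upper bound on the $T$'s; the latter is exactly what the paper establishes inside this lemma's proof, and it is the quantitative content your version omits.
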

\begin{proof}
We first prove that the operator $G_{i}^{k}:U_{s+1}^{h}\to U_{s+1}^{h}$ is symmetric with respect to $(\cdot,\cdot)_{E_{i}^{k}}$. Since the operators $(B_{0,i}^{k})^{-1}$ and $(B_{l,i}^{k})^{-1} (l=1,2,...,N)$ are symmetric with respect to $b(\cdot,\cdot)$, we have
\begin{align}
&\ \ \ \ (Q_{s+1}^{h}(B_{i}^{k})^{-1}(\lambda_{i}^{k}-A^{h})v,w)_{E_{i}^{k}}
=((B_{i}^{k})^{-1}(\lambda_{i}^{k}-A^{h})v,w)_{E_{i}^{k}}\notag\\
&=b((A^{h}-\lambda_{i}^{k})v,(B_{i}^{k})^{-1}(\lambda_{i}^{k}-A^{h})w)
=(v,Q_{s+1}^{h}(B_{i}^{k})^{-1}(\lambda_{i}^{k}-A^{h})w)_{E_{i}^{k}},\ \ \ \ \forall\ v,w\in U_{s+1}^{h},
\end{align}
which means that the operator $G_{i}^{k}:U_{s+1}^{h}\to U_{s+1}^{h}$ is symmetric with respect to $(\cdot,\cdot)_{E_{i}^{k}}$.
\par  Next, for any $i\ (i=1,2,...,s)$, define an operator $T_{0,i}^{k}:U_{s+1}^{h}\to U_{s+1}^{H}$ such that for any $v\in U_{s+1}^{h}$,
\begin{equation}\label{Definition_T0ik}
(T_{0,i}^{k}v,w)_{E_{i}^{k}}=(v,w)_{E_{i}^{k}} \ \ \ \ \ \ \forall\ w\in U_{s+1}^{H}.
\end{equation}
By \eqref{equation_EigenvalueRelation} and the Lax-Milgram Theorem, we know that the operator $T_{0,i}^{k}$ is well-defined. Similarly, we may define some operators $T_{l,i}^{k}:U_{s+1}^{h}\to V^{(l)}\ (l=1,2,...,N)$ such that for any $v\in U_{s+1}^{h}$,
\begin{equation}\label{Definition_Tlik}
(T_{l,i}^{k}v,w)_{E_{i}^{k}}=(v,w)_{E_{i}^{k}} \ \ \ \ \ \  \forall\ w\in V^{(l)}.
\end{equation}
By \eqref{EigenvalueLocal} and the Lax-Milgram Theorem, the operators $T_{l,i}^{k}\ (l=1,2,...,N)$ are also well-defined.
It is easy to check that $T_{0,i}^{k}=(B_{0,i}^{k})^{-1}Q_{s+1}^{H}Q^{H}(A^{h}-\lambda_{i}^{k})$ and $T_{l,i}^{k}=(B_{l,i}^{k})^{-1}Q^{(l)}(A^{h}-\lambda_{i}^{k})$. Moreover,
$$G_{i}^{k}=I+\alpha_{i}^{k}Q_{s+1}^{h}(B_{i}^{k})^{-1}(\lambda_{i}^{k}-A^{h})=I-\alpha_{i}^{k} Q_{s+1}^{h}T_{0,i}^{k}-\alpha_{i}^{k} \sum_{l=1}^{N}Q_{s+1}^{h}T_{l,i}^{k}.$$
For any $v\in U_{s+1}^{h},$ by \eqref{Definition_T0ik} and \eqref{Definition_Tlik}, we have
\begin{align}
(G_{i}^{k}v,v)_{E_{i}^{k}}
&=||v||_{E_{i}^{k}}^{2}-\alpha_{i}^{k}||T_{0,i}^{k}v||_{E_{i}^{k}}^{2}
-\alpha_{i}^{k}\sum_{l=1}^{N}||T_{l,i}^{k}v||_{E_{i}^{k}}^{2}\label{PositiveEstimate1}.
\end{align}
For the second term of \eqref{PositiveEstimate1}, by the Cauchy-Schwarz inequality, we get
\begin{align}
&\ \ \ \ ||T_{0,i}^{k}v||_{E_{i}^{k}}^{2}=(T_{0,i}^{k}v,v)_{E_{i}^{k}}
=(Q_{s+1}^{h}T_{0,i}^{k}v,v)_{E_{i}^{k}}
\leq ||Q_{s+1}^{h}T_{0,i}^{k}v||_{E_{i}^{k}}||v||_{E_{i}^{k}}\notag\\
&\leq ||Q_{s+1}^{h}T_{0,i}^{k}v||_{a}||v||_{E_{i}^{k}}
\leq ||T_{0,i}^{k}v||_{a}||v||_{E_{i}^{k}}
\leq \sqrt{\beta(\lambda_{s+1}^{H})}\ ||T_{0,i}^{k}v||_{E_{i}^{k}}||v||_{E_{i}^{k}}, \label{PositiveEstimate2}
\end{align}
which yields
\begin{equation}\label{PositiveEstimate4}
||T_{0,i}^{k}v||_{E_{i}^{k}}^{2}\leq \beta(\lambda_{s+1}^{H})||v||_{E_{i}^{k}}^{2}.
\end{equation}
For the third term of \eqref{PositiveEstimate1}, by the Cauchy-Schwarz inequality, we dedcue
\begin{align}
\sum_{l=1}^{N}||T_{l,i}^{k}v||_{E_{i}^{k}}^{2}=\sum_{l=1}^{N}(T_{l,i}^{k}v,v)_{E_{i}^{k}}=(Q_{s+1}^{h}\sum_{l=1}^{N}T_{l,i}^{k}v,v)_{E_{i}^{k}}
\leq ||Q_{s+1}^{h}\sum_{l=1}^{N}T_{l,i}^{k}v||_{E_{i}^{k}}||v||_{E_{i}^{k}}.\label{PositiveEstimate5}
\end{align}
By Lemma \ref{Lemma_strengthenedCauchySchwarzinequality}, we get
\begin{align}
||Q_{s+1}^{h}\sum_{l=1}^{N}T_{l,i}^{k}v||_{E_{i}^{k}}^{2}
\leq ||\sum_{l=1}^{N}T_{l,i}^{k}v||_{a}^{2}\leq N_{0}\sum_{l=1}^{N}||T_{l,i}^{k}v||_{a}^{2}\leq N_{0}\max_{1\leq l\leq N}\beta(\lambda_{1,l}^{h})\sum_{l=1}^{N}||T_{l,i}^{k}v||_{E_{i}^{k}}^{2},\label{PositiveEstimate6}
\end{align}
where $\lambda_{1,l}^{h}=\lambda_{min}(A^{(l)})=O(H_{l}^{-2})$.
Using \eqref{PositiveEstimate5} and \eqref{PositiveEstimate6}, we obtain
\begin{equation}\label{PositiveEstimate7}
\sum_{l=1}^{N}||T_{l,i}^{k}v||_{E_{i}^{k}}^{2}\leq N_{0}\max_{1\leq l\leq N}\beta(\lambda_{1,l}^{h})||v||_{E_{i}^{k}}^{2}.
\end{equation}
Combining \eqref{PositiveEstimate1}, \eqref{PositiveEstimate4} and \eqref{PositiveEstimate7}, we know that for any $v\in U_{s+1}^{h}$,
\begin{align*}
(G_{i}^{k}v,v)_{E_{i}^{k}}
&=||v||_{E_{i}^{k}}^{2}-\alpha_{i}^{k}||T_{0,i}^{k}v||_{E_{i}^{k}}^{2}
-\alpha_{i}^{k}\sum_{l=1}^{N}||T_{l,i}^{k}v||_{E_{i}^{k}}^{2}\\
&\geq \{1-\alpha_{i}^{k}(\beta(\lambda_{s+1}^{H})+N_{0}\max_{1\leq l\leq N}\beta(\lambda_{1,l}^{h}))\}||v||_{E_{i}^{k}}^{2}.
\end{align*}
Taking $0<\alpha_{i}^{k}<\alpha_{i,0}^{k}=\frac{1}{\beta(\lambda_{s+1}^{H})+N_{0}\max_{1\leq l\leq N}\beta(\lambda_{1,l}^{h})},$ we complete the proof of this lemma. \qed
\end{proof}
\begin{remark}\label{remark_sum}
By Lemma \ref{Lemma_strengthenedCauchySchwarzinequality}, \eqref{PositiveEstimate6} and \eqref{PositiveEstimate7}, we have
\begin{align*}
||\sum_{l=1}^{N}T_{l,i}^{k}v||_{a}^{2}\leq N_{0}\sum_{l=1}^{N}||T_{l,i}^{k}v||_{a}^{2}
\leq C\sum_{l=1}^{N}||T_{l,i}^{k}v||_{E_{i}^{k}}^{2}\leq C||v||_{E_{i}^{k}}^{2}\ \ \ \ \forall\ v\in U_{s+1}^{h}.
\end{align*}
Moreover,
\begin{align*}
||\sum_{l=1}^{N}T_{l,i}^{k}v||_{b}^{2}\leq N_{0}\sum_{l=1}^{N}||T_{l,i}^{k}v||_{b}^{2}
\leq CH^{2}\sum_{l=1}^{N}||T_{l,i}^{k}v||_{a}^{2}\leq CH^{2}||v||_{E_{i}^{k}}^{2}\ \ \ \ \forall\ v\in U_{s+1}^{h}.
\end{align*}
\end{remark}
\begin{lemma}\label{Lemma_StableDecomposition}
Under the same assumptions as in Theorem \ref{Gkv2theorem}, for any $v\in U_{s+1}^{h}$, there exist $w_{0}\in U_{s+1}^{H}$ and $w^{(l)}\in V^{(l)}\ (l=1,2,...,N),$ such that
\begin{equation}\notag
v=Q_{s+1}^{h}w_{0}+\sum_{l=1}^{N}Q_{s+1}^{h}w^{(l)},
\end{equation}
and
\begin{equation}\label{ErrorStableDecomposition}
(w_{0},w_{0})_{E_{i}^{k}}+\sum_{l=1}^{N}(w^{(l)},w^{(l)})_{E_{i}^{k}}\leq C(1+\frac{H}{\delta})(v,v)_{E_{i}^{k}},\ \ i=1,2,...,s.
\end{equation}
\end{lemma}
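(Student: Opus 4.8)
The plan is to adapt the classical two-level overlapping Schwarz stable-decomposition argument (as in \cite{ToselliM}), but carried out \emph{inside the error space} $U_{s+1}^{h}$ rather than on all of $V^{h}$, and with the energy inner product $(\cdot,\cdot)_{E_{i}^{k}}$ in place of $a(\cdot,\cdot)$. Given $v\in U_{s+1}^{h}$, I would first produce a coarse correction. The natural candidate is $w_{0}:=T_{0,i}^{k}v\in U_{s+1}^{H}$ (equivalently the $(\cdot,\cdot)_{E_{i}^{k}}$-projection of $v$ onto $U_{s+1}^{H}$), so that $(w_{0},w_{0})_{E_{i}^{k}}\le(v,v)_{E_{i}^{k}}$ by \eqref{PositiveEstimate4} together with the observation that $\beta(\lambda_{s+1}^{H})$ is bounded; more importantly, the residual $v-Q_{s+1}^{h}w_{0}$ must be small in $L^{2}$ so that the subsequent partition-of-unity splitting picks up the good factor. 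Here is where Lemma \ref{Lemma_QUsQs1} and \eqref{Equation_Qs1HQHQshvh} enter: the difference between $v$ and a genuine coarse function in $V^{H}$ is controlled by $CH$ in the $a$-norm and $CH^{2}$ in the $b$-norm, which is exactly what is needed to absorb the $\frac{H_{l}}{\delta_{l}}\|u\|_{L^{2}(\Omega_{l}')}^{2}$ term appearing in Lemma \ref{Lemma_SmallOvelapping}.

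Next I would decompose the fine remainder $r:=v-Q_{s+1}^{h}w_{0}$ using the partition of unity $\{\theta_{l}\}_{l=1}^{N}$ from \eqref{unitypartition}. Setting (essentially) $\widetilde{w}^{(l)}:=I^{h}(\theta_{l}\, r_{\mathrm{fine}})$, where $r_{\mathrm{fine}}$ is the fine-grid function whose $b$-orthogonal projection onto $U_{s+1}^h$ is $r$ and $I^{h}$ is the nodal interpolant, one gets $\sum_{l}\widetilde{w}^{(l)}=r_{\mathrm{fine}}$ up to the usual interpolation bookkeeping, and then $w^{(l)}:=\widetilde{w}^{(l)}$ (viewed in $V^{(l)}$ by zero extension) satisfies $v=Q_{s+1}^{h}w_{0}+\sum_{l}Q_{s+1}^{h}w^{(l)}$ after applying $Q_{s+1}^{h}$. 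The standard estimates give
\[
\sum_{l=1}^{N}|w^{(l)}|_{H^{1}(\Omega)}^{2}\le C\Big(\|r\|_{a}^{2}+\sum_{l=1}^{N}\frac{1}{\delta_{l}^{2}}\|r\|_{L^{2}(\Omega_{l,\delta_{l}})}^{2}\Big),
\]
and Lemma \ref{Lemma_SmallOvelapping} converts the strip $L^{2}$-norms into $(1+\frac{H}{\delta})$ times a combination of $\|r\|_{a}^{2}$ and $\frac{1}{H\delta}\|r\|_{b}^{2}$. Because $r=v-Q_{s+1}^{h}w_{0}$ and $w_0$ is the coarse $E_i^k$-projection, $\|r\|_{b}^{2}\le CH^{2}\|v\|_{E_i^k}^2$ (using \eqref{Equation_Qs1HQHQshvh}-type bounds and the equivalence of $\|\cdot\|_{E_i^k}$ with $\|\cdot\|_a$ on $U_{s+1}^h$), so the $\frac{1}{H\delta}\|r\|_{b}^{2}$ contribution is $O(\frac{H}{\delta}\|v\|_{E_i^k}^2)$, matching the claimed constant. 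Finally I would pass from the $a$-seminorm/$b$-norm bounds back to $\|\cdot\|_{E_i^k}$: on $U_{s+1}^h$ and $V^{(l)}$ the norm $\|\cdot\|_{E_i^k}$ is dominated by $\|\cdot\|_a$ (since $(w,w)_{E_i^k}\le a(w,w)$), and $a(w^{(l)},w^{(l)})=|w^{(l)}|_{H^1(\Omega)}^2$, so assembling the coarse and local contributions yields \eqref{ErrorStableDecomposition}.

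The main obstacle I anticipate is the bookkeeping at the interface between ``functions in $U_{s+1}^{h}$'' and ``functions in $U_{s+1}^{H}$ or $V^{(l)}$'': the decomposition is required to hold only \emph{after} applying $Q_{s+1}^{h}$, so one must carefully track how $Q_{s+1}^{h}$ interacts with the partition-of-unity pieces, and in particular show that replacing the genuine fine-grid function by its $b$-projection onto $U_{s+1}^h$ does not inflate the energy — this is precisely the place where \eqref{Equation_QsHQHQs1hvh} and \eqref{Equation_Qs1HQHQshvh} (the ``cross'' projection estimates) and the smallness of $\theta_{b}^k,\theta_{a}^k$ from Lemma \ref{Lemma_theta_bk} and Corollary \ref{Lemma_theta_ak} are indispensable, and where the construction genuinely departs from the classical one in \cite{ToselliM}. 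A secondary technical point is ensuring the coarse component $w_0$ lies in $U_{s+1}^{H}$ (not merely in $V^H$): this is guaranteed by defining $w_0$ via the $E_i^k$-projection $T_{0,i}^k$, whose well-posedness on $U_{s+1}^{H}$ follows from \eqref{equation_EigenvalueRelation} and the Lax--Milgram theorem, exactly as in Lemma \ref{Lemma_SymmetricPositive}.
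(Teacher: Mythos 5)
Your overall architecture matches the paper's: pick a coarse component $w_{0}\in U_{s+1}^{H}$, split the remainder $v-w_{0}$ with the partition of unity via $w^{(l)}=I^{h}(\theta_{l}(v-w_{0}))$, apply $Q_{s+1}^{h}$ to recover $v$ (this identity holds for any $w_{0}\in V^{h}$, since $\sum_{l}I^{h}(\theta_{l}(v-w_{0}))=v-w_{0}$ and $Q_{s+1}^{h}v=v$), and use Lemma \ref{Lemma_SmallOvelapping} so that the strip terms cost only a factor $1+\frac{H}{\delta}$ provided $\|v-w_{0}\|_{b}\leq CH\|v\|_{E_{i}^{k}}$. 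The genuine gap is in how you propose to get that last, decisive $L^{2}$ bound. You choose $w_{0}=T_{0,i}^{k}v$, the $(\cdot,\cdot)_{E_{i}^{k}}$-projection onto $U_{s+1}^{H}$, and then assert $\|v-Q_{s+1}^{h}w_{0}\|_{b}^{2}\leq CH^{2}\|v\|_{E_{i}^{k}}^{2}$ ``using \eqref{Equation_Qs1HQHQshvh}-type bounds and norm equivalence.'' But \eqref{Equation_QsHQHQs1hvh} and \eqref{Equation_Qs1HQHQshvh} are statements about the $b$-orthogonal projections $Q^{H}$, $Q_{s}^{H}$, $Q_{s+1}^{H}$; they say nothing about the residual of the $E_{i}^{k}$-projection $T_{0,i}^{k}$. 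An $L^{2}$ approximation estimate for $T_{0,i}^{k}$ would require a separate argument (a duality argument is delicate here because $(\cdot,\cdot)_{E_{i}^{k}}$ is indefinite on $V^{h}$ and $v-T_{0,i}^{k}v$ leaves the spaces on which it is positive, or else a comparison of $T_{0,i}^{k}v$ with a $b$-projection, which is extra work you have not supplied). Energy-norm stability of $T_{0,i}^{k}$, which you do get from \eqref{PositiveEstimate4}, is not enough: without the $O(H)$ $b$-norm bound the term $\frac{1}{H_{l}\delta_{l}}\|v-w_{0}\|_{b,\Omega_{l}'}^{2}$ from Lemma \ref{Lemma_SmallOvelapping} is only controlled by $\frac{1}{H\delta}\|v\|_{b}^{2}$, which destroys the claimed constant $C(1+\frac{H}{\delta})$.

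The paper sidesteps exactly this difficulty by taking $w_{0}=Q_{s+1}^{H}Q^{H}v$ instead. Then $v-w_{0}=(v-Q^{H}v)+Q_{s}^{H}Q^{H}v$, the first term obeys the standard estimate $\|v-Q^{H}v\|_{b}\leq CH\|v\|_{a}$, and the second is precisely where the cross-projection estimate \eqref{Equation_QsHQHQs1hvh} applies (since $v=Q_{s+1}^{h}v$), giving $\|Q_{s}^{H}Q^{H}v\|_{b}\leq CH^{2}\|v\|_{b}$; the coarse energy bound follows from the $H^{1}$-stability of $Q^{H}$. If you replace your coarse component by this one, the rest of your outline goes through essentially as you wrote it; also note that the gap estimates $\theta_{a}^{k},\theta_{b}^{k}$ from Lemma \ref{Lemma_theta_bk} and Corollary \ref{Lemma_theta_ak}, which you anticipated needing, play no role in this lemma.
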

\begin{proof}
For any $v\in U_{s+1}^{h}$, set $w_{0}=Q_{s+1}^{H}Q^{H}v$ and $w^{(l)}=I^{h}(\theta_{l}(v-w_{0}))$, where $I^{h}:C^{0}(\bar{\Omega})\to V^{h}$ is the usual nodal interpolation operator. It is easy to check that
$$Q_{s+1}^{h}w_{0}+\sum_{l=1}^{N}Q_{s+1}^{h}w^{(l)}=Q_{s+1}^{h}w_{0}+Q_{s+1}^{h}I^{h}\{(\sum_{l=1}^{N}\theta_{l})(v-w_{0})\}=v.$$
Next, we prove \eqref{ErrorStableDecomposition}. For the coarse component, we deduce
\begin{align}
||w_{0}||_{E_{i}^{k}}^{2}\leq ||Q_{s+1}^{H}Q^{H}v||_{a}^{2}
\leq ||Q^{H}v||_{a}^{2}\leq C||v||^{2}_{a}
\leq C||v||^{2}_{E_{i}^{k}}.\label{CoarseComponent}
\end{align}
For the local fine components, by the property of the operator $I^{h}$ (see Lemma 3.9 in \cite{ToselliM}), we have
\begin{align}
&\ \ \ \ \sum_{l=1}^{N}(w^{(l)},w^{(l)})_{E_{i}^{k}}\leq \sum_{l=1}^{N}a(w^{(l)},w^{(l)})
=\sum_{l=1}^{N}|I^{h}(\theta_{l}(v-w_{0}))|_{1,\Omega_{l}^{'}}^{2}\notag\\
&\leq C\sum_{l=1}^{N}|\theta_{l}(v-w_{0})|_{1,\Omega_{l}^{'}}^{2}
\leq C\sum_{l=1}^{N}(||\nabla{(v-w_{0})}||_{b,\Omega_{l}^{'}}^{2}
+\frac{1}{\delta_{l}^{2}}||v-w_{0}||_{b,\Omega_{l,\delta_{l}}}^{2}),\label{LocalFineComponent1}
\end{align}
where $||v||^{2}_{b,\widetilde{\Omega}}=\int_{\widetilde{\Omega}}v^{2}dx,\ |v|^{2}_{1,\widetilde{\Omega}}=\int_{\widetilde{\Omega}}\nabla{v}\cdot \nabla{v}dx$ for all $\widetilde{\Omega}\subset \Omega$.
On one hand, by \eqref{CoarseComponent}, we get
\begin{align}
\sum_{l=1}^{N}||\nabla{(v-w_{0})}||_{b,\Omega_{l}^{'}}^{2}\leq C||\nabla{(v-w_{0})}||_{b}^{2}
\leq C\{||v||_{a}^{2}+||w_{0}||_{a}^{2}\}\leq C||v||_{E_{i}^{k}}^{2}.\label{LocalFineComponent2}
\end{align}
On the other hand, by Lemma \ref{Lemma_SmallOvelapping} and \eqref{LocalFineComponent2}, we obtain
\begin{align}
\sum_{l=1}^{N}\frac{1}{\delta_{l}^{2}}||v-w_{0}||_{b,\Omega_{l,\delta_{l}}}^{2}
&\leq C\sum_{l=1}^{N}\{(1+\frac{H_{l}}{\delta_{l}})|v-w_{0}|^{2}_{1,\Omega_{l}^{'}}
+\frac{1}{H_{l}\delta_{l}}||v-w_{0}||^{2}_{b,\Omega_{l}^{'}}\}\notag\\
&\leq C(1+\frac{H}{\delta})||v||_{E_{i}^{k}}^{2}+ C\sum_{l=1}^{N}\frac{1}{H_{l}\delta_{l}}||v-w_{0}||^{2}_{b,\Omega_{l}^{'}}.\label{LocalFineComponent3}
\end{align}
Furthermore,
\begin{align}
\sum_{l=1}^{N}\frac{1}{H_{l}\delta_{l}}||v-w_{0}||^{2}_{b,\Omega_{l}^{'}}
\leq \frac{1}{\min_{1\leq l\leq N}{(H_{l}\delta_{l}})}\sum_{l=1}^{N}||v-w_{0}||^{2}_{b,\Omega_{l}^{'}}
\leq \frac{C}{\min_{1\leq l\leq N}{(H_{l}\delta_{l}})}||v-w_{0}||^{2}_{b}.\label{J2l_TheSecond}
\end{align}
By \eqref{Equation_QsHQHQs1hvh} and the Poincar\'e inequality, we get
\begin{align*}
||v-w_{0}||_{b}
&\leq ||v-Q^{H}v||_{b}+||Q_{s}^{H}Q^{H}v||_{b}
\leq CH||v||_{a}+CH^{2}||v||_{b}\leq CH||v||_{E_{i}^{k}},
\end{align*}
which, together with \eqref{LocalFineComponent3}, \eqref{J2l_TheSecond} and the fact that $\mathcal{J}_{H}$ is quasi-uniform, yields
\begin{align}
\sum_{l=1}^{N}\frac{1}{\delta_{l}^{2}}||v-w_{0}||_{b,\Omega_{l,\delta_{l}}}^{2}
\leq C(1+\frac{H}{\delta})||v||_{E_{i}^{k}}^{2}.\label{LocalFineComponent4}
\end{align}
Combining \eqref{CoarseComponent},\eqref{LocalFineComponent1},\eqref{LocalFineComponent2} and \eqref{LocalFineComponent4} together, we complete the proof of \eqref{ErrorStableDecomposition}.  \qed
\end{proof}

\begin{remark}
After carefully checking our proof of Lemma \ref{Lemma_StableDecomposition}, we know that the argument of the proof may be extended to the case of the fourth order symmetric elliptic operator. For the case of small overlap, we have the similar result as Lemma \ref{Lemma_StableDecomposition}, and we only need to modify \eqref{ErrorStableDecomposition}
to
\begin{equation}\notag
(w_{0},w_{0})_{E_{i}^{k}}+\sum_{l=1}^{N}(w^{(l)},w^{(l)})_{E_{i}^{k}}\leq C(1+\frac{H^{3}}{\delta^{3}})(v,v)_{E_{i}^{k}},\ \ i=1,2,...,s.
\end{equation}
\end{remark}
\par\noindent{\bf Proof of Theorem \ref{Gkv2theorem}}:\
For any $v\in U_{s+1}^{h}$, by Lemma \ref{Lemma_StableDecomposition}, there exist $w_{0}\in U_{s+1}^{H}$ and $w_{l}\in V^{(l)}$ such that
\begin{equation}\label{stabledecomposition}
v=Q_{s+1}^{h}w_{0}+\sum_{l=1}^{N}Q_{s+1}^{h}w_{l}\ \ \ \text{and}\ \ \sum_{l=0}^{N}||w_{l}||^{2}_{E_{i}^{k}}\leq C(1+\frac{H}{\delta})||v||_{E_{i}^{k}}^{2}.
\end{equation}
By \eqref{Definition_T0ik}, \eqref{Definition_Tlik}, \eqref{PositiveEstimate5}, \eqref{stabledecomposition} and the Cauchy-Schwarz inequality, we may obtain
\begin{equation}
(v,v)_{E_{i}^{k}}\leq C(1+\frac{H}{\delta})(Q_{s+1}^{h}\sum_{l=0}^{N}T_{l,i}^{k}v,v)_{E_{i}^{k}}.
\end{equation}
Moreover,
\begin{align*}
(G_{i}^{k}v,v)_{E_{i}^{k}}&=(v,v)_{E_{i}^{k}}-
\alpha_{i}^{k}(Q_{s+1}^{h}\sum_{l=0}^{N}T_{l,i}^{k}v,v)_{E_{i}^{k}}\\
&\leq \{1-\frac{\alpha_{i}^{k}}{C(1+\frac{H}{\delta})}\}(v,v)_{E_{i}^{k}}
\leq (1-C\frac{\delta}{H})(v,v)_{E_{i}^{k}}.
\end{align*}
By Lemma \ref{Lemma_SymmetricPositive}, we obtain
\begin{align}\notag
||G_{i}^{k}v||_{E_{i}^{k}}\leq ||G_{i}^{k}||_{E_{i}^{k}}||v||_{E_{i}^{k}}=\sup_{ v\ne0,v\in U_{s+1}^{h}}\frac{(G_{i}^{k}v,v)_{E_{i}^{k}}}{(v,v)_{E_{i}^{k}}}||v||_{E_{i}^{k}}\leq (1-C\frac{\delta}{H})||v||_{E_{i}^{k}},
\end{align}
which completes the proof of this theorem. \qed

\subsubsection{Estimate of the additional error term $I_{2,i}^{k}$}

\par In this subsection, we give an estimate for the additional error term $I_{2,i}^{k}$.

\par For convenience, denote by $R_{i,s+1}^{k}:=Q_{s+1}^{h}(B_{0,i}^{k})^{-1}Q_{s+1}^{H}Q^{H}$ and $\widetilde{R}_{i,s+1}^{k}:=(B_{0,i}^{k})^{-1}Q_{s+1}^{H}Q^{H}$. Similarly, denote by
\[S_{i,s+1}^{k}:=Q_{s+1}^{h}\sum_{l=1}^{N}(B_{l,i}^{k})^{-1}Q^{(l)}
\ \text{and}\ \widetilde{S}_{i,s+1}^{k}:=\sum_{l=1}^{N}(B_{l,i}^{k})^{-1}Q^{(l)}.\]
Hence, the additional error term $I_{2,i}^{k}$ defined in \eqref{SumI1ikI2ik} may be written as
\begin{align}
I_{2,i}^{k}=-\alpha_{i}^{k}(R_{i,s+1}^{k}+S_{i,s+1}^{k})(\lambda_{i}^{k}-A^{h})Q_{s}^{h}u_{i}^{k}
+\alpha_{i}^{k}Q_{s+1}^{h}Q_{U^{k}}(B_{i}^{k})^{-1}r_{i}^{k}. \label{Equation_SecondIi2k}
\end{align}

\begin{theorem}\label{Theorem_DDMassociate}
If Assumption \ref{Assumption1} and Assumption \ref{Assumption2} hold, then
\begin{equation}\notag
||I_{2,i}^{k}||_{E_{i}^{k}}\leq CH||e_{i,s+1}^{k}||_{E_{i}^{k}}+CH^{2}||g_{i}^{k}||_{a}.
\end{equation}
\end{theorem}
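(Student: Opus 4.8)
The plan is to bound each of the three pieces appearing in the expression \eqref{Equation_SecondIi2k} for $I_{2,i}^{k}$ separately, using the $(\cdot,\cdot)_{E_{i}^{k}}$-norm and the equivalence of $\|\cdot\|_{E_{i}^{k}}$ with $\|\cdot\|_{a}$ on $U_{s+1}^{h}$ (via the function $\beta(\lambda_{s+1}^{h})$). First I would rewrite the ``source'' vector $(\lambda_{i}^{k}-A^{h})Q_{s}^{h}u_{i}^{k}$ using \eqref{Relationu1e2_inequlity2} from Lemma \ref{Lemma_Relationu1e2}, which already tells us $\|(\lambda_{i}^{k}-A^{h})Q_{s}^{h}u_{i}^{k}\|_{b}\leq C\|g_{i}^{k}\|_{a}$. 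The key point is that this vector lies (essentially) in $U_{s}^{h}$, so when we hit it with the coarse operator $\widetilde R_{i,s+1}^{k}=(B_{0,i}^{k})^{-1}Q_{s+1}^{H}Q^{H}$ the projector $Q_{s+1}^{H}Q^{H}$ sees only the part of $Q^{H}(U_{s}^{h})$ lying in $U_{s+1}^{H}$, which by \eqref{Equation_Qs1HQHQshvh} is of size $O(H^{2})$; combined with $\|(B_{0,i}^{k})^{-1}\|_b=O(H^{2m})$ from \eqref{EigenvalueCoarse} we get a comfortable bound $\|R_{i,s+1}^{k}(\lambda_{i}^{k}-A^{h})Q_{s}^{h}u_{i}^{k}\|_{E_{i}^{k}}\leq CH^{2}\|g_{i}^{k}\|_{a}$ after absorbing constants. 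For the local piece $S_{i,s+1}^{k}(\lambda_{i}^{k}-A^{h})Q_{s}^{h}u_{i}^{k}$ I would use Remark \ref{remark_sum}: $\|\sum_l T_{l,i}^{k}w\|_{b}\leq CH\|w\|_{E_{i}^{k}}$, with $w$ chosen so that $(A^{h}-\lambda_{i}^{k})w$ reproduces $(\lambda_{i}^{k}-A^{h})Q_{s}^{h}u_{i}^{k}$ — i.e. $w=-v_{i,s}^{k}/\lambda_i^k\cdot\lambda_i^k$ type manipulation, using the isomorphism $A^{h}|_{U_{s}^{h}}$ exactly as in the last part of the proof of Lemma \ref{Lemma_Relationu1e2} — so that this term is $\leq CH\|g_{i}^{k}\|_{a}$ or better.

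Next I would treat the projection term $Q_{s+1}^{h}Q_{U^{k}}(B_{i}^{k})^{-1}r_{i}^{k}$. The factor $Q_{s+1}^{h}Q_{U^{k}}$ is the obstruction: by Lemma \ref{Lemma_Theta_EnergyL2} we have $\interleave Q_{s+1}^{h}Q_{U^{k}}\interleave_{b}=\theta_{b}^{k}$, and by Lemma \ref{Lemma_theta_bk} $\theta_{b}^{k}\leq CH$. Meanwhile $(B_{i}^{k})^{-1}r_{i}^{k}$ can be split along $V^{h}=U_s^h\oplus U_{s+1}^h$ and its pieces bounded using \eqref{PositiveEstimate4}, \eqref{PositiveEstimate7} and Remark \ref{remark_sum}, together with $r_{i}^{k}=-\lambda_i^k A^h g_i^k=(A^h-\lambda_i^k)(-e_{i,s+1}^k+\cdots)$; schematically $\|(B_i^k)^{-1}r_i^k\|_{a}\leq C(\|e_{i,s+1}^k\|_{E_i^k}+\|g_i^k\|_a)$ or $C\|e_{i,s+1}^k\|_{E_i^k}+CH\|g_i^k\|_a$. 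Multiplying by the $\theta_b^k\leq CH$ factor from the projector and converting $\|\cdot\|_b$ to $\|\cdot\|_{E_i^k}$ gives a contribution $\leq CH\|e_{i,s+1}^k\|_{E_i^k}+CH^2\|g_i^k\|_a$, which is exactly the claimed bound's shape.

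The main obstacle, I expect, is the bookkeeping around the term $(\lambda_{i}^{k}-A^{h})Q_{s}^{h}u_{i}^{k}$: it is \emph{not} literally in $U_{s}^{h}$ after one applies $(\lambda_i^k-A^h)$ — rather $A^h Q_s^h u_i^k\in U_s^h$ and $\lambda_i^k Q_s^h u_i^k\in U_s^h$, so it \emph{is} in $U_s^h$; the subtlety is instead that to exploit this I must commute $(\lambda_i^k-A^h)$ past $Q^H$ and $(B_{0,i}^k)^{-1}$, which do \emph{not} commute since $A^H\ne Q^H A^h$. The clean way around this is to not expand operators but to use the variational characterizations \eqref{Definition_T0ik}, \eqref{Definition_Tlik} of $T_{0,i}^k, T_{l,i}^k$ directly: write $R_{i,s+1}^k(\lambda_i^k-A^h)Q_s^hu_i^k$ as $-Q_{s+1}^h T_{0,i}^k z$ for an appropriate $z\in U_{s+1}^h$ solving $(z,w)_{E_i^k}=b((\lambda_i^k-A^h)Q_s^hu_i^k,w)$ for all $w\in U_{s+1}^h$, estimate $\|z\|_{E_i^k}$ using \eqref{Relationu1e2_inequlity2}, and then use $\|T_{0,i}^k z\|_{E_i^k}\leq\sqrt{\beta(\lambda_{s+1}^H)}\|z\|_{E_i^k}$ from \eqref{PositiveEstimate4}. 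Once the operator-commutation trap is sidestepped this way, the rest is routine: collect the three bounds, use $\theta_b^k,\theta_a^k\leq CH$ and $\|g_i^k\|_a\leq CH$ where needed, and the triangle inequality yields $\|I_{2,i}^k\|_{E_i^k}\leq CH\|e_{i,s+1}^k\|_{E_i^k}+CH^2\|g_i^k\|_a$.
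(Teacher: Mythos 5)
Your overall plan coincides with the paper's: split $I_{2,i}^{k}$ as in \eqref{Equation_SecondIi2k}, bound the coarse and local preconditioner pieces acting on $(\lambda_i^k-A^h)Q_s^hu_i^k$ by $CH^{2}\|g_i^k\|_a$ using \eqref{Relationu1e2_inequlity2} and \eqref{Equation_Qs1HQHQshvh}, and bound $Q_{s+1}^hQ_{U^k}(B_i^k)^{-1}r_i^k$ by $CH\|e_{i,s+1}^k\|_{E_i^k}+CH^{2}\|g_i^k\|_a$ using the $O(H)$ gap together with \eqref{PositiveEstimate4} and \eqref{PositiveEstimate7}. However, several steps as you state them would not go through. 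The main one is the norm bookkeeping: on $U_{s+1}^h$ (and on $U_{s+1}^H$) the norm $\|\cdot\|_{E_i^k}$ is equivalent to $\|\cdot\|_a$, which dominates $\|\cdot\|_b$, so a $b$-norm bound cannot be ``converted'' into an $E_i^k$-bound without an inverse inequality costing negative powers of $h$. This affects both your coarse estimate via $\|(B_{0,i}^k)^{-1}\|_b$ (which, incidentally, is $O(1)$ on $U_{s+1}^H$ because $\lambda_{\min}(B_{0,i}^k|_{U_{s+1}^H})=\lambda_{s+1}^H-\lambda_i^k$ is bounded below by Assumption \ref{Assumption1}, not $O(H^{2m})$) and your treatment of the projection term via $\interleave Q_{s+1}^hQ_{U^k}\interleave_b=\theta_b^k$ followed by ``converting $\|\cdot\|_b$ to $\|\cdot\|_{E_i^k}$''. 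The paper stays in the $E_i^k$/$a$-norm throughout: for the coarse piece it uses the duality identity $\|R_{i,s+1}^kw\|_{E_i^k}^2=b(Q_{s+1}^HQ^Hw,\,T_{0,i}^kR_{i,s+1}^kw)$ together with \eqref{Equation_Qs1HQHQshvh} and \eqref{PositiveEstimate4}, and for the projection term it writes $Q_{s+1}^hQ_{U^k}=Q_{s+1}^hP_{U^k}Q_{U^k}$ and invokes $\interleave Q_{s+1}^hP_{U^k}\interleave_a=\theta_a^k\leq CH$ from Lemma \ref{Lemma_Theta_EnergyL2} and Corollary \ref{Lemma_theta_ak}.

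Two further points. Your ``clean way'' around the commutation issue is itself incorrect: you define $z\in U_{s+1}^h$ by testing against $U_{s+1}^h$, but $T_{0,i}^kz$ is characterized by testing against $U_{s+1}^H$, and $U_{s+1}^H\not\subset U_{s+1}^h$, so the identity $R_{i,s+1}^k(\lambda_i^k-A^h)Q_s^hu_i^k=-Q_{s+1}^hT_{0,i}^kz$ does not follow. The object you want is $y:=(B_{0,i}^k)^{-1}Q_{s+1}^HQ^Hw\in U_{s+1}^H$ (with $w=(\lambda_i^k-A^h)Q_s^hu_i^k$), characterized by $(y,v)_{E_i^k}=b(w,v)$ for all $v\in U_{s+1}^H$; then $\|y\|_{E_i^k}^2=b(Q_{s+1}^HQ^Hw,y)\leq CH^2\|w\|_b\|y\|_b$ yields the $CH^2\|g_i^k\|_a$ bound, which is exactly the paper's computation in disguise. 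Finally, your handling of the local piece leans on Remark \ref{remark_sum}, which is stated only for $v\in U_{s+1}^h$; here the datum lies in $U_s^h$, where $(\cdot,\cdot)_{E_i^k}$ is not even positive, so that remark cannot be invoked, and the ``$w=-v_{i,s}^k$''-type manipulation is left too vague to check. The paper instead proves $\|S_{i,s+1}^kw\|_{E_i^k}\leq CH^2\|w\|_a$ for $w\in U_s^h$ directly, by a subdomain-wise duality pairing combined with the Poincar\'e inequality in $V^{(l)}$ and \eqref{PositiveEstimate7}. With these repairs your outline reproduces the paper's argument; without them the key $CH^2\|g_i^k\|_a$ and $CH\|e_{i,s+1}^k\|_{E_i^k}$ bounds are not justified.
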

\begin{proof}
Firstly, we estimate the first term of $I_{2,i}^{k}$ in \eqref{Equation_SecondIi2k}. For any $w\in U_{s}^{h}$, by \eqref{Equation_Qs1HQHQshvh},  \eqref{PositiveEstimate4} and the Cauchy-Schwarz inequality , we get
\begin{align*}
&\ \ \ \ ||R_{i,s+1}^{k}w||_{E_{i}^{k}}^{2}
=b(Q_{s+1}^{h}(B_{0,i}^{k})^{-1}Q_{s+1}^{H}Q^{H}w,(A^{h}-\lambda_{i}^{k})R_{i,s+1}^{k}w)\\
&=b((B_{0,i}^{k})^{-1}Q_{s+1}^{H}Q^{H}w,(A^{h}-\lambda_{i}^{k})R_{i,s+1}^{k}w)
=b(Q_{s+1}^{H}Q^{H}w,T_{0,i}^{k}R_{i,s+1}^{k}w)\\
&\leq ||Q_{s+1}^{H}Q^{H}w||_{b}||T_{0,i}^{k}R_{i,s+1}^{k}w||_{b}
\leq CH^{2}||w||_{b}||T_{0,i}^{k}R_{i,s+1}^{k}w||_{E_{i}^{k}}
\leq CH^{2}||w||_{b}||R_{i,s+1}^{k}w||_{E_{i}^{k}},
\end{align*}
which means that $||R_{i,s+1}^{k}w||_{E_{i}^{k}}\leq CH^{2}||w||_{b}.$
In particular, we take $w=(\lambda_{i}^{k}-A^{h})Q_{s}^{h}u_{i}^{k}$. By Lemma \ref{Lemma_Relationu1e2}, we know
\begin{align}
||R_{i,s+1}^{k}(\lambda_{i}^{k}-A^{h})Q_{s}^{h}u_{i}^{k}||_{E_{i}^{k}}
\leq CH^{2}||(\lambda_{i}^{k}-A^{h})Q_{s}^{h}u_{i}^{k}||_{b}
\leq CH^{2}||g_{i}^{k}||_{a}. \label{EstimateforI11}
\end{align}
For any $w\in U_{s}^{h}$, by the Poincar\'e inequality in $V^{(l)}$ and \eqref{PositiveEstimate7}, we obtain
\begin{align*}
||S_{i,s+1}^{k}w||_{E_{i}^{k}}^{2}
&=\sum_{l=1}^{N}b((B_{l,i}^{k})^{-1}Q^{(l)}w,(A^{h}-\lambda^{k})S_{i,s+1}^{k}w)
=\sum_{l=1}^{N}b(Q^{(l)}w,T_{l,i}^{k}S_{i,s+1}^{k}w)\\
&\leq  \{\sum_{l=1}^{N}||Q^{(l)}w||_{b,\Omega_{l}^{'}}^{2}\}^{\frac{1}{2}} \{\sum_{l=1}^{N} ||T_{l,i}^{k}S_{i,s+1}^{k}w||_{b,\Omega_{l}^{'}}^{2}\}^{\frac{1}{2}}
\leq CH^{2}||w||_{a}||S_{i,s+1}^{k}w||_{E_{i}^{k}},
\end{align*}
which means that $||S_{i,s+1}^{k}w||_{E_{i}^{k}}\leq  CH^{2}||w||_{a}\leq  C\sqrt{\lambda_{s}^{h}}H^{2}||w||_{b}.$ Specially, we take $w=(\lambda_{i}^{k}-A^{h})Q_{s}^{h}u_{i}^{k}$. By Lemma \ref{Lemma_Relationu1e2}, we know
\begin{equation}\label{EstimateforI12}
||S_{i,s+1}^{k}(\lambda_{i}^{k}-A^{h})Q_{s}^{h}u_{i}^{k}||_{E_{i}^{k}}\leq CH^{2}||(\lambda_{i}^{k}-A^{h})Q_{s}^{h}u_{i}^{k}||_{b}
\leq CH^{2}||g_{i}^{k}||_{a},
\end{equation}
which, together with \eqref{EstimateforI11}, yields
\begin{equation}\label{I2ik_FirstTerm}
||-\alpha_{i}^{k}(R_{i,s+1}^{k}+S_{i,s+1}^{k})(\lambda_{i}^{k}-A^{h})Q_{s}^{h}u_{i}^{k}||_{E_{i}^{k}}\leq CH^{2}||g_{i}^{k}||_{a}.
\end{equation}
\par Secondly, we estimate the second term of $I_{2,i}^{k}$ in \eqref{Equation_SecondIi2k}. We divide it into three terms:
\begin{align}
Q_{s+1}^{h}Q_{U^{k}}(B_{i}^{k})^{-1}r_{i}^{k}
&=Q_{s+1}^{h}Q_{U^{k}}\widetilde{R}_{i,s+1}^{k}(\lambda_{i}^{k}-A^{h})Q_{s}^{h}u_{i}^{k} +Q_{s+1}^{h}Q_{U^{k}}\widetilde{S}_{i,s+1}^{k}(\lambda_{i}^{k}-A^{h})Q_{s}^{h}u_{i}^{k}\notag\\
&\ \ \ \ +Q_{s+1}^{h}Q_{U^{k}}(B_{i}^{k})^{-1}(A^{h}-\lambda_{i}^{k})e_{i,s+1}^{k}=: L_{1}+L_{2}+L_{3}.\label{Estimate_L1L2L3}
\end{align}
We estimate \eqref{Estimate_L1L2L3} one by one. Denote by $r_{i,s}^{k}:=(\lambda_{i}^{k}-A^{h})Q_{s}^{h}u_{i}^{k}$ and $ r_{i,s+1}^{k}:=(A^{h}-\lambda_{i}^{k})e_{i,s+1}^{k}$, and we know $r_{i}^{k}=r_{i,s}^{k}+r_{i,s+1}^{k}$.
 For $L_{1}$ in \eqref{Estimate_L1L2L3}, by Lemma \ref{Lemma_NormQs+1h}, Corollary \ref{Lemma_theta_ak} and Lemma \ref{Lemma_Theta_EnergyL2}, we have
\begin{align}
||L_{1}||_{E_{i}^{k}}
&\leq ||Q_{s+1}^{h}Q_{U^{k}}Q_{s+1}^{h}\widetilde{R}_{i,s+1}^{k}r_{i,s}^{k}||_{a}
+||Q_{s+1}^{h}Q_{U^{k}}Q_{s}^{h}\widetilde{R}_{i,s+1}^{k}r_{i,s}^{k}||_{a}\notag\\
&= ||Q_{s+1}^{h}P_{U^{k}}Q_{U^{k}}Q_{s+1}^{h}\widetilde{R}_{i,s+1}^{k}r_{i,s}^{k}||_{a}
+||Q_{s+1}^{h}P_{U^{k}}Q_{U^{k}}Q_{s}^{h}\widetilde{R}_{i,s+1}^{k}r_{i,s}^{k}||_{a}\notag\\
&\leq CH||R_{i,s+1}^{k}r_{i,s}^{k}||_{a}+
CH||Q_{s}^{h}\widetilde{R}_{i,s+1}^{k}r_{i,s}^{k}||_{a}. \label{Estimate_L11}
\end{align}
Note that
\begin{align}
||Q_{s}^{h}\widetilde{R}_{i,s+1}^{k}r_{i,s}^{k}||_{a}
&\leq \sqrt{\lambda_{s}^{h}}||Q_{s}^{h}(B_{0,i}^{k})^{-1}Q_{s+1}^{H}Q^{H}r_{i,s}^{k}||_{b}
\leq CH^{2}||(B_{0,i}^{k})^{-1}Q_{s+1}^{H}Q^{H}r_{i,s}^{k}||_{b}\notag\\
&\leq \frac{CH^{2}}{\lambda_{s+1}^{H}-\lambda_{i}^{k}}||Q_{s+1}^{H}Q^{H}r_{i,s}^{k}||_{b}
\leq CH^{4}||(\lambda_{i}^{k}-A^{h})Q_{s}^{h}u_{i}^{k}||_{b}
\leq CH^{4}||g_{i}^{k}||_{a}. \label{Estimate_L12}
\end{align}
By \eqref{EstimateforI11},\ \eqref{Estimate_L11} and \eqref{Estimate_L12}, we obtain
\begin{equation}\label{Estimate_L1}
||L_{1}||_{E_{i}^{k}}\leq CH^{3}||g_{i}^{k}||_{a}+CH^{5}||g_{i}^{k}||_{a}\leq CH^{3}||g_{i}^{k}||_{a} .
\end{equation}
For $L_{2}$ in \eqref{Estimate_L1L2L3}, we deduce
\begin{align}
||L_{2}||_{E_{i}^{k}}
&\leq ||Q_{s+1}^{h}Q_{U^{k}}Q_{s+1}^{h}\widetilde{S}_{i,s+1}^{k}r_{i,s}^{k}||_{a}
+||Q_{s+1}^{h}Q_{U^{k}}Q_{s}^{h}\widetilde{S}_{i,s+1}^{k}r_{i,s}^{k}||_{a}\notag\\
&=||Q_{s+1}^{h}P_{U^{k}}Q_{U^{k}}Q_{s+1}^{h}\widetilde{S}_{i,s+1}^{k}r_{i,s}^{k}||_{a}
+||Q_{s+1}^{h}P_{U^{k}}Q_{U^{k}}Q_{s}^{h}\widetilde{S}_{i,s+1}^{k}r_{i,s}^{k}||_{a}\notag\\
&\leq CH||S_{i,s+1}^{k}r_{i,s}^{k}||_{a}
+CH||Q_{s}^{h}\widetilde{S}_{i,s+1}^{k}r_{i,s}^{k}||_{a}.\label{Estimate_L21}
\end{align}
In addition, by Lemma \ref{Lemma_strengthenedCauchySchwarzinequality}, Lemma \ref{Lemma_Relationu1e2} and the Poincar\'e inequality in $V^{(l)}$, we get
\begin{align}
||Q_{s}^{h}\widetilde{S}_{i,s+1}^{k}r_{i,s}^{k}||_{a}^{2}
&\leq \lambda_{s}^{h}||\widetilde{S}_{i,s+1}^{k}r_{i,s}^{k}||_{b}^{2}
\leq CN_{0}\sum_{l=1}^{N}||(B_{l,i}^{k})^{-1}Q^{(l)}r_{i,s}^{k}||^{2}_{b}
\leq  CH^{4}\sum_{l=1}^{N}||Q^{(l)}r_{i,s}^{k}||^{2}_{b}\notag\\
&\leq  CH^{6}\sum_{l=1}^{N}||Q^{(l)}r_{i,s}^{k}||^{2}_{a}
\leq CH^{6}||r_{i,s}^{k}||_{a}^{2}
\leq C\lambda_{s}^{h}H^{6}||r_{i,s}^{k}||_{b}^{2}
\leq CH^{6}||g_{i}^{k}||_{a}^{2}.\label{Estimate_L22}
\end{align}
Combining \eqref{EstimateforI12}, \eqref{Estimate_L21} and \eqref{Estimate_L22}, we obtain
\begin{equation}\label{Estimate_L2}
||L_{2}||_{E_{i}^{k}}\leq CH^{3}||g_{i}^{k}||_{a}+CH^{4}||g_{i}^{k}||_{a}\leq CH^{3}||g_{i}^{k}||_{a}.
\end{equation}
For $L_{3}$ in \eqref{Estimate_L1L2L3}, by Lemma \ref{Lemma_QUsQs1}, Remark \ref{remark_sum} and \eqref{PositiveEstimate4}, we have
\begin{align}
||L_{3}||_{E_{i}^{k}}
&\leq ||Q_{s+1}^{h}Q_{U^{k}}Q_{s+1}^{h}(B_{i}^{k})^{-1}r_{i,s+1}^{k}||_{a}+
||Q_{s+1}^{h}Q_{U^{k}}Q_{s}^{h}(B_{i}^{k})^{-1}r_{i,s+1}^{k}||_{a}\notag\\
&=||Q_{s+1}^{h}P_{U^{k}}Q_{U^{k}}Q_{s+1}^{h}(B_{i}^{k})^{-1}r_{i,s+1}^{k}||_{a}+
||Q_{s+1}^{h}P_{U^{k}}Q_{U^{k}}Q_{s}^{h}(B_{i}^{k})^{-1}r_{i,s+1}^{k}||_{a}\notag\\
&\leq CH||(B_{i}^{k})^{-1}r_{i,s+1}^{k}||_{a}+CH||Q_{s}^{h}(B_{i}^{k})^{-1}r_{i,s+1}^{k}||_{a}\notag\\
&\leq CH||T_{0,i}^{k}e_{i,s+1}^{k}||_{a}+CH||\sum_{l=1}^{N}T_{l,i}^{k}e_{i,s+1}^{k}||_{a}+ CH||Q_{s}^{h}T_{0,i}^{k}e_{i,s+1}^{k}||_{b}\notag\\
&\ \ \ \ +CH||Q_{s}^{h}\sum_{l=1}^{N}T_{l,i}^{k}e_{i,s+1}^{k}||_{b}
\leq  CH||e_{i,s+1}^{k}||_{E_{i}^{k}},\label{align_L3}
\end{align}
which, together with \eqref{Estimate_L1L2L3}, \eqref{Estimate_L1}, \eqref{Estimate_L2}, yields
\begin{equation}\label{I2ik_SecondTerm}
||\alpha_{i}^{k}Q_{s+1}^{h}Q_{U^{k}}(B_{i}^{k})^{-1}r_{i}^{k}||_{E_{i}^{k}}\leq CH||e_{i,s+1}^{k}||_{E_{i}^{k}}+CH^{3}||g_{i}^{k}||_{a}.
\end{equation}
\par Finally, combining \eqref{Equation_SecondIi2k}, \eqref{I2ik_FirstTerm} and \eqref{I2ik_SecondTerm}, we may complete the proof of this theorem.\qed
\end{proof}
\begin{theorem}\label{Theorem_DDMEstimate}
If Assumption \ref{Assumption1} and Assumption \ref{Assumption2} hold, we have
\begin{equation}\notag
||\widetilde{e}_{i,s+1}^{k+1}||_{E_{i}^{k}}\leq c_{0}(H)(1-C\frac{\delta}{H})||e_{i,s+1}^{k}||_{E_{i}^{k}}+CH^{2}||g_{i}^{k}||_{a},\ \ \ i=1,2,...,s,
\end{equation}
where $H$-dependent constant $c_{0}(H)\ (=1+\frac{CH}{1-C\frac{\delta}{H}})$ decreases monotonically to $1$, as $H \to 0$.
\end{theorem}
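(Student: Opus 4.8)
The plan is to obtain the bound by simply combining the two estimates already proved in this subsection. Starting from the identity \eqref{SumI1ikI2ik}, namely $\widetilde{e}_{i,s+1}^{k+1}=I_{1,i}^{k}+I_{2,i}^{k}$ with $I_{1,i}^{k}=G_{i}^{k}e_{i,s+1}^{k}$, I first note that all three vectors $\widetilde{e}_{i,s+1}^{k+1}$, $I_{1,i}^{k}$ and $I_{2,i}^{k}$ lie in $U_{s+1}^{h}$ (the first because it equals $-Q_{s+1}^{h}\widetilde{u}_{i}^{k+1}$, the second because $G_{i}^{k}$ maps $U_{s+1}^{h}$ into itself, and the third by subtraction), so that $\|\cdot\|_{E_{i}^{k}}$ is a genuine norm on the space where everything lives, and the triangle inequality gives
\[
\|\widetilde{e}_{i,s+1}^{k+1}\|_{E_{i}^{k}}\le\|G_{i}^{k}e_{i,s+1}^{k}\|_{E_{i}^{k}}+\|I_{2,i}^{k}\|_{E_{i}^{k}}.
\]

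Next I would invoke Theorem \ref{Gkv2theorem} with $v=e_{i,s+1}^{k}\in U_{s+1}^{h}$ (with $\alpha_{i}^{k}$ chosen small enough, exactly as in Lemma \ref{Lemma_SymmetricPositive} and Theorem \ref{Gkv2theorem}), which yields $\|G_{i}^{k}e_{i,s+1}^{k}\|_{E_{i}^{k}}\le(1-C\frac{\delta}{H})\|e_{i,s+1}^{k}\|_{E_{i}^{k}}$, and Theorem \ref{Theorem_DDMassociate}, which gives $\|I_{2,i}^{k}\|_{E_{i}^{k}}\le CH\|e_{i,s+1}^{k}\|_{E_{i}^{k}}+CH^{2}\|g_{i}^{k}\|_{a}$. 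Adding the two bounds produces
\[
\|\widetilde{e}_{i,s+1}^{k+1}\|_{E_{i}^{k}}\le\Bigl[(1-C\tfrac{\delta}{H})+CH\Bigr]\|e_{i,s+1}^{k}\|_{E_{i}^{k}}+CH^{2}\|g_{i}^{k}\|_{a}.
\]

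Finally I would factor the bracket as $(1-C\frac{\delta}{H})\bigl(1+\frac{CH}{1-C\delta/H}\bigr)$, which is legitimate since Theorem \ref{Gkv2theorem} (for $\alpha_{i}^{k}$ small) produces a contraction factor $1-C\frac{\delta}{H}\in(0,1)$ bounded away from $0$; this bracket is exactly $c_{0}(H)(1-C\frac{\delta}{H})$ with $c_{0}(H)=1+\frac{CH}{1-C\delta/H}$, giving the claimed inequality. For the monotonicity and limit, I would observe that along the natural refinement $\delta/H$ stays bounded and $1-C\delta/H$ stays bounded away from $0$, so the numerator $CH\to0$ forces $c_{0}(H)$ to decrease monotonically to $1$ as $H\to0$. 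There is no serious obstacle left in this statement: the real content sits in Theorems \ref{Gkv2theorem} and \ref{Theorem_DDMassociate}, and the only point needing a moment's care is the bookkeeping of the generic constants, so that the $C$ appearing in $1-C\frac{\delta}{H}$ after the factorization is the one delivered by Theorem \ref{Gkv2theorem}, with the $CH$ contribution coming from $I_{2,i}^{k}$ absorbed into $c_{0}(H)$.
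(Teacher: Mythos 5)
Your proposal is correct and matches the paper's own proof essentially verbatim: the paper also applies the triangle inequality to the splitting $\widetilde{e}_{i,s+1}^{k+1}=I_{1,i}^{k}+I_{2,i}^{k}$, bounds $\|G_{i}^{k}e_{i,s+1}^{k}\|_{E_{i}^{k}}$ by Theorem \ref{Gkv2theorem} and $\|I_{2,i}^{k}\|_{E_{i}^{k}}$ by Theorem \ref{Theorem_DDMassociate}, and then factors $(1-C\frac{\delta}{H})+CH$ as $c_{0}(H)(1-C\frac{\delta}{H})$. Your extra remark that all terms lie in $U_{s+1}^{h}$ (so $\|\cdot\|_{E_{i}^{k}}$ is a norm there) is a harmless bit of added care.
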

\begin{proof}
By \eqref{SumI1ikI2ik}, Theorem \ref{Gkv2theorem} and Theorem \ref{Theorem_DDMassociate}, we have
\begin{align*}
&\ \ \ \ ||\widetilde{e}_{i,s+1}^{k+1}||_{E_{i}^{k}}
\leq ||I_{1,i}^{k}||_{E_{i}^{k}}+||I_{2,i}^{k}||_{E_{i}^{k}}
=||G_{i}^{k}e_{i,s+1}^{k}||_{E_{i}^{k}}+||I_{2,i}^{k}||_{E_{i}^{k}}\\
&\leq (1-C\frac{\delta}{H})||e_{i,s+1}^{k}||_{E_{i}^{k}}+CH||e_{i,s+1}^{k}||_{E_{i}^{k}}+CH^{2}||g_{i}^{k}||_{a}\\
&\leq \{(1-C\frac{\delta}{H})+CH\}||e_{i,s+1}^{k}||_{E_{i}^{k}}+CH^{2}||g_{i}^{k}||_{a}\\
&= c_{0}(H)(1-C\frac{\delta}{H})||e_{i,s+1}^{k}||_{E_{i}^{k}}+CH^{2}||g_{i}^{k}||_{a},
\end{align*}
where $c_{0}(H)\ (=1+\frac{CH}{1-C\frac{\delta}{H}})$ decreases monotonically to $1$, as $H \to 0$. \qed
\end{proof}
\begin{remark}\label{remark_QusrikEstimate}
By Remark \ref{remark_sum}, \eqref{Estimate_L12}, \eqref{Estimate_L22}, \eqref{align_L3} and the facts that
$(B_{i}^{k})^{-1}=\widetilde{R}_{i,s+1}^{k}+\widetilde{S}_{i,s+1}^{k}$ and $r_{i}^{k}=r_{i,s}^{k}+r_{i,s+1}^{k}$, we have
\begin{equation}\notag
||Q_{s}^{h}(B_{i}^{k})^{-1}r_{i}^{k}||_{b}\leq C||Q_{s}^{h}(B_{i}^{k})^{-1}r_{i}^{k}||_{a}\leq CH||e_{i,s+1}^{k}||_{E_{i}^{k}}+CH^{3}||g_{i}^{k}||_{a}.
\end{equation}
Similarly, by Remark \ref{remark_sum}, \eqref{PositiveEstimate4} and \eqref{I2ik_FirstTerm}, we get
\begin{equation}\notag
||Q_{s+1}^{h}(B_{i}^{k})^{-1}r_{i}^{k}||_{b}\leq C||Q_{s+1}^{h}(B_{i}^{k})^{-1}r_{i}^{k}||_{a}\leq C||e_{i,s+1}^{k}||_{E_{i}^{k}}+CH^{2}||g_{i}^{k}||_{a}.
\end{equation}
\end{remark}
\subsection{The proof of the main result}
\par In this subsection, based on Theorem \ref{Theorem_DDMEstimate} in previous subsection, we first give an estimate for
\begin{equation}\notag
\sum_{i=1}^{s}(Rq(\widetilde{u}_{i}^{k+1})-Rq(Q_{s}^{h}u_{i}^{k})),
\end{equation}
and then present a rigorous proof of Theorem \ref{MainResult}.
\begin{lemma}\label{Lemma_ConvergentRate}
Let Assumption \ref{Assumption1} and Assumption \ref{Assumption2} hold. It holds that
\begin{equation}\label{Equation_mainresult1}
\sum_{i=1}^{s}(Rq(\widetilde{u}_{i}^{k+1})-Rq(Q_{s}^{h}u_{i}^{k}))
\leq  \gamma_{0}\sum_{i=1}^{s}(\lambda_{i}^{k}-Rq(Q_{s}^{h}u_{i}^{k}))+CH\sum_{i=1}^{s}(\lambda_{i}^{k}-\lambda_{i}^{h}),
\end{equation}
where $\gamma_{0}=:(1-C\frac{\delta}{H})^{2}+CH.$
\end{lemma}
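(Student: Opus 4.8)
The plan is to convert the difference of Rayleigh quotients into a squared seminorm. For each $i$ introduce the shifted inner product $(v,w)_{F_i^k}:=a(v,w)-Rq(Q_s^h u_i^k)\,b(v,w)$ on $V^h$, so that $\bigl(Rq(v)-Rq(Q_s^h u_i^k)\bigr)\,b(v,v)=\|v\|_{F_i^k}^2$ for every $v\neq 0$. Applying this with $v=\widetilde u_i^{k+1}$ and using the decomposition $\widetilde u_i^{k+1}=Q_s^h\widetilde u_i^{k+1}-\widetilde e_{i,s+1}^{k+1}$, which is orthogonal for both $a(\cdot,\cdot)$ and $b(\cdot,\cdot)$ and hence for $(\cdot,\cdot)_{F_i^k}$, gives
\[
\bigl(Rq(\widetilde u_i^{k+1})-Rq(Q_s^h u_i^k)\bigr)\,\|\widetilde u_i^{k+1}\|_b^2=\|\widetilde e_{i,s+1}^{k+1}\|_{F_i^k}^2+\|Q_s^h\widetilde u_i^{k+1}\|_{F_i^k}^2.
\]
I will estimate the first term (the principal one) and the second (the loss-of-orthonormality term), sum over $i$, and finally replace $\|\widetilde u_i^{k+1}\|_b^2$ by $1$.

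For the principal term, since $\widetilde e_{i,s+1}^{k+1}\in U_{s+1}^h$ one has $\|\widetilde e_{i,s+1}^{k+1}\|_{F_i^k}^2=\|\widetilde e_{i,s+1}^{k+1}\|_{E_i^k}^2+\bigl(\lambda_i^k-Rq(Q_s^h u_i^k)\bigr)\,b(\widetilde e_{i,s+1}^{k+1},\widetilde e_{i,s+1}^{k+1})\leq(1+CH^2)\|\widetilde e_{i,s+1}^{k+1}\|_{E_i^k}^2$ by \eqref{Relationu1e2_inequlity} and the coercivity $b(v,v)\leq C\|v\|_{E_i^k}^2$ on $U_{s+1}^h$. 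Theorem \ref{Theorem_DDMEstimate} bounds $\|\widetilde e_{i,s+1}^{k+1}\|_{E_i^k}$ by $c_0(H)(1-C\frac{\delta}{H})\|e_{i,s+1}^k\|_{E_i^k}+CH^2\|g_i^k\|_a$; squaring with the refined inequality $(x+y)^2\leq(1+CH)x^2+(1+\frac{C}{H})y^2$ and using $c_0(H)^2(1-C\frac{\delta}{H})^2\leq(1-C\frac{\delta}{H})^2+CH$ yields $\|\widetilde e_{i,s+1}^{k+1}\|_{E_i^k}^2\leq\bigl((1-C\frac{\delta}{H})^2+CH\bigr)\|e_{i,s+1}^k\|_{E_i^k}^2+CH^3\|g_i^k\|_a^2$. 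Then \eqref{Relationu1e2_equlity} identifies $\|e_{i,s+1}^k\|_{E_i^k}^2=\bigl(\lambda_i^k-Rq(Q_s^h u_i^k)\bigr)\|Q_s^h u_i^k\|_b^2\leq\lambda_i^k-Rq(Q_s^h u_i^k)$ and Lemma \ref{Lemma_gik} gives $\sum_i\|g_i^k\|_a^2\leq C\sum_i(\lambda_i^k-\lambda_i^h)$, so after summation this term already delivers the stated right-hand side with $\gamma_0=(1-C\frac{\delta}{H})^2+CH$.

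The delicate point is $\|Q_s^h\widetilde u_i^{k+1}\|_{F_i^k}^2$. Writing $Q_s^h\widetilde u_i^{k+1}=Q_s^h u_i^k+\alpha_i^k Q_s^h t_i^{k+1}$ and observing $\|Q_s^h u_i^k\|_{F_i^k}^2=0$ (since $Q_s^h u_i^k$ has Rayleigh quotient exactly $Rq(Q_s^h u_i^k)$), bilinearity gives $\|Q_s^h\widetilde u_i^{k+1}\|_{F_i^k}^2=2\alpha_i^k(Q_s^h u_i^k,Q_s^h t_i^{k+1})_{F_i^k}+(\alpha_i^k)^2\|Q_s^h t_i^{k+1}\|_{F_i^k}^2$. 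A crude estimate of the cross term is \emph{not} enough: it would be linear in $\|e_{i,s+1}^k\|_{E_i^k}$, which, since $\|e_{i,s+1}^k\|_{E_i^k}\leq CH$, is of the same size as the principal term $\|e_{i,s+1}^k\|_{E_i^k}^2$ and would wreck the contraction. The way around this is to use that the residual $w:=(A^h-Rq(Q_s^h u_i^k))Q_s^h u_i^k$ lies in $U_s^h$ and that $Q_\perp^k$ is $b$-self-adjoint, so that
\[
(Q_s^h u_i^k,Q_s^h t_i^{k+1})_{F_i^k}=b\bigl(w,\,t_i^{k+1}\bigr)=b\bigl(Q_\perp^k w,\,(B_i^k)^{-1}r_i^k\bigr),
\]
where $\|Q_\perp^k w\|_b\leq\theta_b^k\|w\|_b\leq CH\|g_i^k\|_a$ by \eqref{Relationu1e2_inequlity2}, \eqref{Relationu1e2_inequlity3} and Lemma \ref{Lemma_theta_bk}, while $\|(B_i^k)^{-1}r_i^k\|_b\leq C\|e_{i,s+1}^k\|_{E_i^k}+CH^2\|g_i^k\|_a$ by Remark \ref{remark_QusrikEstimate}. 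The cross term is thus genuinely quadratic; combined with $(\alpha_i^k)^2\|Q_s^h t_i^{k+1}\|_{F_i^k}^2\leq C\|Q_s^h t_i^{k+1}\|_a^2\leq CH^2\bigl(\|e_{i,s+1}^k\|_{E_i^k}^2+\|g_i^k\|_a^2\bigr)$ (again from Remark \ref{remark_QusrikEstimate} and the smallness of $\theta_a^k,\theta_b^k$) and Young's inequality, it follows that $\|Q_s^h\widetilde u_i^{k+1}\|_{F_i^k}^2\leq CH\|e_{i,s+1}^k\|_{E_i^k}^2+CH\|g_i^k\|_a^2$, which after summation contributes only $CH\sum_i(\lambda_i^k-Rq(Q_s^h u_i^k))+CH\sum_i(\lambda_i^k-\lambda_i^h)$ and is absorbed into $\gamma_0\sum_i(\lambda_i^k-Rq(Q_s^h u_i^k))$.

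Finally, since $\widetilde u_i^{k+1}=u_i^k+\alpha_i^k t_i^{k+1}$ with $t_i^{k+1}$ $b$-orthogonal to $u_i^k$, we have $\|\widetilde u_i^{k+1}\|_b^2=1+(\alpha_i^k)^2\|t_i^{k+1}\|_b^2$; using $a(u_i^k,t_i^{k+1})=-b\bigl(r_i^k,(B_i^k)^{-1}r_i^k\bigr)\leq 0$ and $\|t_i^{k+1}\|_a,\|t_i^{k+1}\|_b\leq C\bigl(\|e_{i,s+1}^k\|_{E_i^k}+\|g_i^k\|_a\bigr)$ (Remark \ref{remark_QusrikEstimate}, Lemma \ref{Lemma_NormQs+1h}) one checks $(\alpha_i^k)^2\|t_i^{k+1}\|_b^2\leq C\bigl(\|e_{i,s+1}^k\|_{E_i^k}^2+\|g_i^k\|_a^2\bigr)$ and $|Rq(\widetilde u_i^{k+1})-Rq(Q_s^h u_i^k)|\leq CH^2$, so the defect $\sum_i\bigl(Rq(\widetilde u_i^{k+1})-Rq(Q_s^h u_i^k)\bigr)\bigl(\|\widetilde u_i^{k+1}\|_b^2-1\bigr)$ is again bounded by $CH\sum_i(\lambda_i^k-Rq(Q_s^h u_i^k))+CH\sum_i(\lambda_i^k-\lambda_i^h)$. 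Collecting all contributions and absorbing every $O(H)$ factor into $\gamma_0=(1-C\frac{\delta}{H})^2+CH$ gives \eqref{Equation_mainresult1}. The main obstacle is the cross term above, where one must exploit the residual structure rather than bound term by term; a secondary, purely technical burden is tracking exactly which error contributions carry the extra factor $H$ that makes the $\gamma_0$-contraction close.
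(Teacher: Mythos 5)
Your proposal is correct and follows essentially the same route as the paper: your shifted inner product $(\cdot,\cdot)_{F_i^k}$ and the $a$- and $b$-orthogonal splitting $\widetilde u_i^{k+1}=Q_s^h\widetilde u_i^{k+1}-\widetilde e_{i,s+1}^{k+1}$ reproduce exactly the paper's identity splitting the left-hand side into the principal term $J_1$ (handled via Theorem \ref{Theorem_DDMEstimate} and \eqref{Relationu1e2_equlity}) and the term $J_2$ on $U_s^h$. Your key observation about the cross term — rewriting it through the residual $(A^h-Rq(Q_s^h u_i^k))Q_s^h u_i^k$ and extracting the factor $\theta_b^k\leq CH$ via the self-adjointness of the projectors together with Remark \ref{remark_QusrikEstimate} — is precisely the paper's estimate of $J_{2,1}$ and $J_{2,2}$, so the two arguments coincide in substance.
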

\begin{proof}
Firstly, by the fact that $||\widetilde{u}_{i}^{k+1}||_{b}^{2}=||\widetilde{e}_{i,s+1}^{k+1}||_{b}^{2}+||Q_{s}^{h}\widetilde{u}_{i}^{k+1}||_{b}^{2}$, we deduce
\begin{align}
||\widetilde{e}_{i,s+1}^{k+1}||_{E_{i}^{k}}^{2}
&=b((A^{h}-\lambda_{i}^{k})\widetilde{u}_{i}^{k+1},\widetilde{u}_{i}^{k+1})-
b((A^{h}-\lambda_{i}^{k})Q_{s}^{h}\widetilde{u}_{i}^{k+1},Q_{s}^{h}\widetilde{u}_{i}^{k+1})\notag\\
&=(Rq(\widetilde{u}_{i}^{k+1})-Rq(Q_{s}^{h}u_{i}^{k}))||\widetilde{u}_{i}^{k+1}||_{b}^{2}
+(Rq(Q_{s}^{h}u_{i}^{k})-\lambda_{i}^{k})||\widetilde{u}_{i}^{k+1}||_{b}^{2}\notag\\
&\ \ \ \ +(\lambda_{i}^{k}-Rq(Q_{s}^{h}\widetilde{u}_{i}^{k+1}))||Q_{s}^{h}\widetilde{u}_{i}^{k+1}||_{b}^{2}\notag\\
&=(Rq(\widetilde{u}_{i}^{k+1})-Rq(Q_{s}^{h}u_{i}^{k}))||\widetilde{u}_{i}^{k+1}||_{b}^{2}
+(Rq(Q_{s}^{h}u_{i}^{k})-\lambda_{i}^{k})||\widetilde{e}_{i,s+1}^{k+1}||_{b}^{2}\notag\\
&\ \ \ \ +(Rq(Q_{s}^{h}u_{i}^{k})-Rq(Q_{s}^{h}\widetilde{u}_{i}^{k+1}))||Q_{s}^{h}\widetilde{u}_{i}^{k+1}||_{b}^{2},\label{TotalSumEstimate1}
\end{align}
which yields
\begin{align}
(Rq(\widetilde{u}_{i}^{k+1})-Rq(Q_{s}^{h}u_{i}^{k}))||\widetilde{u}_{i}^{k+1}||_{b}^{2}
&=\{||\widetilde{e}_{i,s+1}^{k+1}||_{E_{i}^{k}}^{2}
+(\lambda_{i}^{k}-Rq(Q_{s}^{h}u_{i}^{k}))||\widetilde{e}_{i,s+1}^{k+1}||_{b}^{2}\}\notag\\
&\ \ \ +(Rq(Q_{s}^{h}\widetilde{u}_{i}^{k+1})-Rq(Q_{s}^{h}u_{i}^{k}))||Q_{s}^{h}\widetilde{u}_{i}^{k+1}||_{b}^{2}
=:J_{1}+J_{2}.\label{TotalSumEstimate2}
\end{align}
\par Secondly, we estimate $J_{1}$ and $J_{2}$ in \eqref{TotalSumEstimate2} one by one. For $J_{1}$, by Lemma \ref{Lemma_Relationu1e2} and Theorem \ref{Theorem_DDMEstimate}, we get
\begin{align}
J_{1}
&\leq ||\widetilde{e}_{i,s+1}^{k+1}||_{E_{i}^{k}}^{2}+(\lambda_{i}^{k}-Rq(Q_{s}^{h}u_{i}^{k}))
||\widetilde{e}_{i,s+1}^{k+1}||_{E_{i}^{k}}^{2}\notag\\
&\leq \{ ((1-C\frac{\delta}{H})+CH)||e_{i,s+1}^{k}||_{E_{i}^{k}}+CH^{2}||g_{i}^{k}||_{a}\}^{2}\notag\\
&\ \ \ \ +CH^{2}\{C||e_{i,s+1}^{k}||^{2}_{E_{i}^{k}}+CH^{4}||g_{i}^{k}||^{2}_{a}\}\notag\\
&\leq \gamma_{0}||e_{i,s+1}^{k}||^{2}_{E_{i}^{k}}+CH^{2}||g_{i}^{k}||^{2}_{a}
\leq \gamma_{0}(\lambda_{i}^{k}-Rq(Q_{U_{s}^{h}}u_{i}^{k}))+CH^{2}||g_{i}^{k}||^{2}_{a}.\label{TotalSumEstimate3}
\end{align}
For convenience, denote by $w_{i}^{k}:=Q_{s}^{h}Q_{\perp}^{k}(B_{i}^{k})^{-1}r_{i}^{k}$. For $J_{2}$ in \eqref{TotalSumEstimate2},
by \eqref{uik1}, we deduce
$$b(Q_{s}^{h}\widetilde{u}_{i}^{k+1},Q_{s}^{h}\widetilde{u}_{i}^{k+1})
=b(Q_{s}^{h}u_{i}^{k},Q_{s}^{h}u_{i}^{k})+2\alpha_{i}^{k}b(Q_{s}^{h}u_{i}^{k},w_{i}^{k})+(\alpha_{i}^{k})^{2}b(w_{i}^{k},w_{i}^{k}),$$
and
$$a(Q_{s}^{h}\widetilde{u}_{i}^{k+1},Q_{s}^{h}\widetilde{u}_{i}^{k+1})
=a(Q_{s}^{h}u_{i}^{k},Q_{s}^{h}u_{i}^{k})+2\alpha_{i}^{k}a(Q_{s}^{h}u_{i}^{k},w_{i}^{k})+(\alpha_{i}^{k})^{2}a(w_{i}^{k},w_{i}^{k}),$$
which yields
\begin{align}
J_{2}&=a(Q_{s}^{h}\widetilde{u}_{i}^{k+1},Q_{s}^{h}\widetilde{u}_{i}^{k+1})
-Rq(Q_{s}^{h}u_{i}^{k})||Q_{s}^{h}\widetilde{u}_{i}^{k+1}||_{b}^{2}\notag\\
&=2\alpha_{i}^{k}b((A^{h}-\lambda_{i}^{k})Q_{s}^{h}u_{i}^{k},w_{i}^{k}) +2\alpha_{i}^{k}b((\lambda_{i}^{k}-Rq(Q_{s}^{h}u_{i}^{k}))Q_{s}^{h}u_{i}^{k},
w_{i}^{k})\notag\\
&\ \ \ \ +(\alpha_{i}^{k})^{2}b((A^{h}-Rq(Q_{s}^{h}u_{i}^{k}))w_{i}^{k},
w_{i}^{k})
=:J_{2,1}+J_{2,2}+J_{2,3}.\label{OuterEstimate1}
\end{align}
\par For $J_{2,1}$ in \eqref{OuterEstimate1}, by the Cauchy-Schwarz inequality, Lemma \ref{Lemma_theta_bk}, Lemma \ref{Lemma_Theta_EnergyL2}, Lemma \ref{Lemma_Relationu1e2} and Remark \ref{remark_QusrikEstimate}, we get
\begin{align}
J_{2,1}
&\leq C||(A^{h}-\lambda_{i}^{k})Q_{s}^{h}u_{i}^{k}||_{b}||w_{i}^{k}||_{b}
\leq C\theta_{b}^{k}||g_{i}^{k}||_{a}||(B_{i}^{k})^{-1}r_{i}^{k}||_{b}\notag\\
&\leq CH||g_{i}^{k}||_{a}\{C||e_{i,s+1}^{k}||_{E_{i}^{k}}+CH^{2}||g_{i}^{k}||_{a}\}
\leq CH(\lambda_{i}^{k}-Rq(Q_{s}^{h}u_{i}^{k}))+CH||g_{i}^{k}||_{a}^{2}. \label{OuterEstimate2}
\end{align}
Similarly, for $J_{2,2}$ in \eqref{OuterEstimate1}, we have
\begin{align}
J_{2,2}&\leq C(\lambda_{i}^{k}-Rq(Q_{s}^{h}u_{i}^{k}))||w_{i}^{k}||_{b}
\leq C\theta_{b}^{k}(\lambda_{i}^{k}-Rq(Q_{s}^{h}u_{i}^{k}))||(B_{i}^{k})^{-1}r_{i}^{k}||_{b}\notag\\
&\leq CH^{2}(\lambda_{i}^{k}-Rq(Q_{s}^{h}u_{i}^{k}))+CH^{4}||g_{i}^{k}||_{a}^{2}.\label{OuterEstimate3}
\end{align}
For $J_{2,3}$ in \eqref{OuterEstimate1},  we deduce
\begin{align}
J_{2,3}&=(\alpha_{i}^{k})^{2}\{||w_{i}^{k}||_{a}^{2}+
(\lambda_{i}^{k}-Rq(Q_{s}^{h}u_{i}^{k}))||w_{i}^{k}||_{b}^{2}-\lambda_{i}^{k}||w_{i}^{k}||_{b}^{2}\}\notag\\
&\leq C\{\lambda_{s}^{h}+\lambda_{i}^{k}-Rq(Q_{s}^{h}u_{i}^{k})\}||w_{i}^{k}||_{b}^{2}
\leq C(\theta_{b}^{k})^{2}||(B_{i}^{k})^{-1}r_{i}^{k}||_{b}^{2}\notag\\
&\leq CH^{2}(\lambda_{i}^{k}-Rq(Q_{s}^{h}u_{i}^{k}))+CH^{6}||g_{i}^{k}||_{a}^{2},\notag
\end{align}
which, together with \eqref{OuterEstimate1} \eqref{OuterEstimate2} \eqref{OuterEstimate3}, yields
\begin{align}\label{OuterEstimate4}
J_{2}&=(Rq(Q_{s}^{h}\widetilde{u}_{i}^{k+1})-Rq(Q_{s}^{h}u_{i}^{k}))||Q_{s}^{h}\widetilde{u}_{i}^{k+1}||_{b}^{2}
\leq CH(\lambda_{i}^{k}-Rq(Q_{s}^{h}u_{i}^{k}))+CH||g_{i}^{k}||_{a}^{2}.
\end{align}
\par Finally, combining \eqref{TotalSumEstimate2}, \eqref{TotalSumEstimate3} and \eqref{OuterEstimate4},  we have
\begin{align}
(Rq(\widetilde{u}_{i}^{k+1})-Rq(Q_{s}^{h}u_{i}^{k}))||\widetilde{u}_{i}^{k+1}||_{b}^{2}
&\leq \{\gamma_{0}(\lambda_{i}^{k}-Rq(Q_{s}^{h}u_{i}^{k}))+CH^{2}||g_{i}^{k}||^{2}_{a}\}\notag\\
&\ \ \ \ +\{CH(\lambda_{i}^{k}-Rq(Q_{s}^{h}u_{i}^{k}))+CH||g_{i}^{k}||_{a}^{2}\}\notag\\
&\leq \gamma_{0}(\lambda_{i}^{k}-Rq(Q_{s}^{h}u_{i}^{k}))+CH||g_{i}^{k}||^{2}_{a}.\notag
\end{align}
Since $\frac{1}{||\widetilde{u}_{i}^{k+1}||_{b}}\leq \frac{1}{||u_{i}^{k}||_{b}}=1$, we obtain
\begin{equation}\label{Equation_Sum1}
Rq(\widetilde{u}_{i}^{k+1})-Rq(Q_{s}^{h}u_{i}^{k})\leq \gamma_{0}(\lambda_{i}^{k}-Rq(Q_{s}^{h}u_{i}^{k}))+CH||g_{i}^{k}||^{2}_{a}.
\end{equation}
Taking summation over $i$ in \eqref{Equation_Sum1} and using Lemma \ref{Lemma_gik}, we complete the proof of this lemma.     \qed
\end{proof}
\par We also establish an estimate for $\sum_{i=1}^{s}(\hat{\lambda}_{i}^{k+1}-Rq(\widetilde{u}_{i}^{k+1}))$, where $\hat{\lambda}_{i}^{k+1}$ is defined in \eqref{AuxiliaryEigenvalueProblem}. In order to make the proof of our main result neat, we put the proof of following lemma (Lemma \ref{Lemma_AuxiliaryProblem}) in Appendix.
\begin{lemma}\label{Lemma_AuxiliaryProblem}
Under the same assumptions as in Lemma \ref{Lemma_ConvergentRate}, it holds that
\begin{equation}\notag
\sum_{i=1}^{s}(\hat{\lambda}_{i}^{k+1}-Rq(\widetilde{u}_{i}^{k+1}))\leq CH\sum_{i=1}^{s}(\lambda_{i}^{k}-Rq(Q_{s}^{h}u_{i}^{k}))
+CH^{5}\sum_{i=1}^{s}(\lambda_{i}^{k}-\lambda_{i}^{h}).
\end{equation}
\end{lemma}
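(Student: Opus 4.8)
The plan is to realize $\sum_{i=1}^s\hat\lambda_i^{k+1}$ as the trace of the matrix pencil associated to \eqref{AuxiliaryEigenvalueProblem} in the (non-$b$-orthonormal) basis $\{\widetilde u_i^{k+1}\}_{i=1}^s$ of $\widetilde U^{k+1}$, and to compare it with $\sum_i Rq(\widetilde u_i^{k+1})$, which is the sum of the diagonal Rayleigh quotients in that same basis. Setting $\mathbf A_{ij}=a(\widetilde u_i^{k+1},\widetilde u_j^{k+1})$ and $\mathbf B_{ij}=b(\widetilde u_i^{k+1},\widetilde u_j^{k+1})$, one has $\sum_i\hat\lambda_i^{k+1}={\rm Tr}(\mathbf B^{-1}\mathbf A)$ while $\sum_i Rq(\widetilde u_i^{k+1})=\sum_i\mathbf A_{ii}/\mathbf B_{ii}$, so the lemma reduces to an upper bound for ${\rm Tr}(\mathbf B^{-1}\mathbf A)-\sum_i\mathbf A_{ii}/\mathbf B_{ii}$.

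The first key point is that the corrections are $b$-orthogonal to $U^k$: since $t_i^{k+1}\in(U^k)^\perp$ and $b(u_i^k,u_j^k)=\delta_{ij}$, the Gram matrix splits as $\mathbf B=I+E$ with $E_{ij}=\alpha_i^k\alpha_j^k\,b(t_i^{k+1},t_j^{k+1})$, so $|E_{ij}|\le C\|t_i^{k+1}\|_b\|t_j^{k+1}\|_b$. I would then control $t_i^{k+1}=Q_\perp^k(B_i^k)^{-1}r_i^k$ using Lemma \ref{Lemma_NormQs+1h} and Remark \ref{remark_QusrikEstimate} (applied to $Q_s^h(B_i^k)^{-1}r_i^k$ and $Q_{s+1}^h(B_i^k)^{-1}r_i^k$ separately), obtaining $\|t_i^{k+1}\|_b,\ \|t_i^{k+1}\|_a\le C a_i$ with $a_i:=\|e_{i,s+1}^k\|_{E_i^k}+H^2\|g_i^k\|_a$; Lemma \ref{Lemma_Relationu1e2} and Lemma \ref{Lemma_gik} then give $a_i\le CH$ and $\sum_i a_i^2\le CH^2$. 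In particular $\|E\|\le C\sum_i a_i^2$ is small, so the Neumann expansions $\mathbf B^{-1}=I-E+R$ with $\|R\|\le C\|E\|^2$ and $1/\mathbf B_{ii}=1-E_{ii}+r_i$ with $|r_i|\le CE_{ii}^2$ are legitimate.

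Substituting these expansions, the first-order diagonal contributions cancel, $-{\rm Tr}(E\mathbf A)+\sum_i\mathbf A_{ii}E_{ii}=-\sum_{i\ne j}E_{ij}\mathbf A_{ji}$, and the remainders only leave a term of size $O(\|E\|^2\|\mathbf A\|)$. Since $\|\widetilde u_i^{k+1}\|_a$ is bounded, $\|\mathbf A\|\le C$; for the leading term I would bound the off-diagonal entries $\mathbf A_{ji}$ ($i\ne j$) by expanding $\widetilde u_i^{k+1}=u_i^k+\alpha_i^k t_i^{k+1}$ and using $a(u_i^k,u_j^k)=\lambda_i^k\delta_{ij}=0$ together with Cauchy-Schwarz and the boundedness of $\|u_i^k\|_a=\sqrt{\lambda_i^k}$, which yields $|\mathbf A_{ji}|\le C(a_i+a_j)$. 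Hence $|E_{ij}\mathbf A_{ji}|\le C a_i a_j(a_i+a_j)\le CH(a_i^2+a_j^2)$ after using $a_i\le CH$, so summing over $i\ne j$ and inserting $\sum_i\|e_{i,s+1}^k\|_{E_i^k}^2\le\sum_i(\lambda_i^k-Rq(Q_s^h u_i^k))$ (from Lemma \ref{Lemma_Relationu1e2}) and $\sum_i\|g_i^k\|_a^2\le C\sum_i(\lambda_i^k-\lambda_i^h)$ (from Lemma \ref{Lemma_gik}) produces exactly $CH\sum_i(\lambda_i^k-Rq(Q_s^h u_i^k))+CH^5\sum_i(\lambda_i^k-\lambda_i^h)$; the leftover $O(\|E\|^2\|\mathbf A\|)\le C(\sum_i a_i^2)^2\le CH^2\sum_i a_i^2$ is of the same form and smaller, hence absorbed.

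I expect the main obstacle to be the bookkeeping in this perturbation step: one must see that both the non-orthogonality matrix $E$ and the off-diagonal block of $\mathbf A$ consist of products of two small factors $a_i,a_j$, so that the surviving term $\sum_{i\ne j}E_{ij}\mathbf A_{ji}$ is genuinely cubic in the $a_i$ and, after using $a_i\le CH$, carries precisely the extra power of $H$ demanded by the statement. It is also worth stressing that the argument uses only the Rayleigh-Ritz relation $a(u_i^k,u_j^k)=\lambda_i^k\delta_{ij}$, the $b$-orthogonality of the computed corrections $t_i^{k+1}$, and the gap-free estimates of Section 4 and Remark \ref{remark_QusrikEstimate}; it never isolates an individual Ritz vector and never invokes any gap among $\lambda_1,\dots,\lambda_s$, which is what keeps it consistent with the cluster-robustness claim.
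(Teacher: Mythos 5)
Your proposal is correct, and its skeleton coincides with the paper's: you work with the same pencil matrices $\mathbf B=I+E$ (the paper's $I+\hat B$ with $E_{ij}=\alpha_i^k\alpha_j^k b(z_i^k,z_j^k)$, $z_i^k=t_i^{k+1}$) and $\mathbf A=\Lambda+\hat A$, reduce the left-hand side to ${\rm Tr}(\mathbf B^{-1}\mathbf A)-{\rm Tr}(D_{\mathbf B}^{-1}D_{\mathbf A})$, and feed in exactly the same smallness inputs (Remark \ref{remark_QusrikEstimate} together with Lemma \ref{Lemma_NormQs+1h} for $\|t_i^{k+1}\|_a,\|t_i^{k+1}\|_b\le C(\|e_{i,s+1}^k\|_{E_i^k}+H^2\|g_i^k\|_a)$, then Lemma \ref{Lemma_Relationu1e2} and Lemma \ref{Lemma_gik} to convert $\sum_i a_i^2$ into the two error sums). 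Where you genuinely diverge is the middle algebraic step: the paper invokes the exact trace identity of Lemma \ref{Lemma_TraceEquality} (quoted from Ovtchinnikov) and then bounds the three resulting traces by Frobenius norms, extracting the factor $H$ from $\|\hat A\|_F\le CH$ and the error-sum structure from $\|\hat B\|_F$; you instead expand $\mathbf B^{-1}=I-E+R$ and $1/\mathbf B_{ii}=1-E_{ii}+r_i$ to second order, observe the exact cancellation of the first-order diagonal contributions so that only $-\sum_{i\ne j}E_{ij}\mathbf A_{ji}$ survives, and get the cubic smallness of that term from $|E_{ij}|\le Ca_ia_j$ together with the entrywise bound $|\mathbf A_{ji}|\le C(a_i+a_j)$, which itself rests on the $a$-orthogonality $a(u_i^k,u_j^k)=0$ for $i\ne j$. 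The two routes land on the same estimate; yours is more elementary and self-contained (no external trace lemma, and since $E$ is a positive semidefinite Gram matrix the remainders $R=E^2(I+E)^{-1}$ and $r_i$ are controlled by $\|E\|^2$ without any Neumann smallness caveat), while the paper's is shorter on the page because the cited identity absorbs both the cancellation and the diagonal corrections (its $A_2$ term) in one stroke. Your closing remark is also accurate: like the paper's argument, your proof never uses any gap among $\lambda_1,\dots,\lambda_s$, so cluster robustness is preserved.
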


\par Now we are in a position to prove the main result of this paper.
\par\noindent{\bf Proof of Theorem \ref{MainResult}}:\ \
By Lemma \ref{Lemma_ConvergentRate} and Lemma \ref{Lemma_AuxiliaryProblem}, we get
\begin{align}
\sum_{i=1}^{s}(\hat{\lambda}_{i}^{k+1}-\lambda_{i}^{h})
&=\sum_{i=1}^{s}(\hat{\lambda}_{i}^{k+1}-Rq(\widetilde{u}_{i}^{k+1}))
+\sum_{i=1}^{s}(Rq(\widetilde{u}_{i}^{k+1})-Rq(Q_{s}^{h}u_{i}^{k}))
+\sum_{i=1}^{s}(Rq(Q_{s}^{h}u_{i}^{k})-\lambda_{i}^{h})\notag\\
&\leq \{CH\sum_{i=1}^{s}(\lambda_{i}^{k}-Rq(Q_{s}^{h}u_{i}^{k}))+CH^{5}\sum_{i=1}^{s}(\lambda_{i}^{k}-\lambda_{i}^{h})\}
+\{\gamma_{0}\sum_{i=1}^{s}(\lambda_{i}^{k}-Rq(Q_{s}^{h}u_{i}^{k}))+\notag\\
&\ \ \ \ +CH\sum_{i=1}^{s}(\lambda_{i}^{k}-\lambda_{i}^{h})\}
+\sum_{i=1}^{s}(Rq(Q_{s}^{h}u_{i}^{k})-\lambda_{i}^{h})\notag\\
&\leq \gamma_{0}\sum_{i=1}^{s}(\lambda_{i}^{k}-Rq(Q_{s}^{h}u_{i}^{k}))+
\sum_{i=1}^{s}(Rq(Q_{s}^{h}u_{i}^{k})-\lambda_{i}^{h})+CH\sum_{i=1}^{s}(\lambda_{i}^{k}-\lambda_{i}^{h}).\notag
\end{align}
Considering $Rq(Q_{s}^{h}u_{i}^{k}) \leq \lambda_{i}^{k}$ and $\lambda_{i}^{k+1}\leq \hat{\lambda}_{i}^{k+1}$, we deduce
\begin{align*}
\sum_{i=1}^{s}(\lambda_{i}^{k+1}-\lambda_{i}^{h})
&\leq \gamma_{0}\sum_{i=1}^{s}(\lambda_{i}^{k}-\lambda_{i}^{h})
+(1-\gamma_{0})\sum_{i=1}^{s}(Rq(Q_{s}^{h}u_{i}^{k})-\lambda_{i}^{h})+\\
&\ \ \ \ +CH\sum_{i=1}^{s}(\lambda_{i}^{k}-\lambda_{i}^{h})
\leq \gamma \sum_{i=1}^{s}(\lambda_{i}^{k}-\lambda_{i}^{h}),
\end{align*}
where
$\gamma=\max\{\gamma_{0},1-\gamma_{0}\}=\gamma_{0}=(1-C\frac{\delta}{H})^{2}+CH=c(H)(1-C\frac{\delta}{H})^{2}.$
Here, without loss of generality, let $\gamma_0\geq \frac{1}{2}$. The $H$-dependent constant $c(H)\ (:=1+\frac{CH}{(1-C\frac{\delta}{H})^{2}})$ decreases monotonically to 1, as $H\to$ 0. Combining Lemma \ref{Lemma_theta_bk}, Corollary \ref{Lemma_theta_ak} and \eqref{Total_Theorem}, we may prove \eqref{Hausdorffa_Theorem} and \eqref{Hausdorffb_Theorem}, which completes the proof of this theorem.   \qed
\section{Numerical experiments}
\par In this section, we present several numerical experiments in two and three dimensional eigenvalue problems to support our theoretical findings. For the stopping criterion of the proposed method, we choose the accuracy of  $\sum_{i=1}^{s}|\lambda_{i}^{k+1}-\lambda_{i}^{k}|<tol=1e^{-10}$.

\subsection{2D Laplacian eigenvalue problem}

\par In this subsection, we shall present some numerical results of 2D Laplacian eigenvalue problems in convex and L-shaped domains.
\begin{example}
We consider the Laplacian eigenvalue problem \eqref{Laplacian} in $(0,\pi)^{2}$ and use the triangle $P_{1}$-conforming finite element to compute the first $s$ eigenpairs. First, we choose an initial uniform partition $\mathcal{J}_{H}$ in $\Omega$ with the number of subdomains $N=512$, and coarse grid size $H=\frac{\sqrt{2}\pi}{2^{4}}$. We refine uniformly the grid layer by layer and fix the ratio $\frac{\delta}{H}=\frac{1}{4}$. Next, we test the optimality and scalability of our algorithm.
\end{example}

\begin{table}[H]
 \centering
 \caption{ $N=512$, $\frac{\delta}{H}=\frac{1}{4}$, $s=19$}\label{Table1}
\newcolumntype{d}{D{.}{.}{2}}
\begin{tabular}{|c|c|c|c|c|c|c|}
\hline
\multicolumn{1}{|c|}{$d.o.f.$}&\multicolumn{1}{c|}{$16129 $}&\multicolumn{1}{c|}{$65025$}
&\multicolumn{1}{c|}{$261121$}&\multicolumn{1}{c|}{$1046529$}&\multicolumn{1}{c|}{$4190209$}&\multicolumn{1}{c|}{$16769025$}\\
\hline
\multicolumn{1}{|c|}{$\lambda_{i}$}
&\multicolumn{1}{c|}{$21(it.)$}&\multicolumn{1}{c|}{$22(it.)$}&\multicolumn{1}{c|}{$22(it.)$}
&\multicolumn{1}{c|}{$22(it.)$}&\multicolumn{1}{c|}{$22(it.)$}&\multicolumn{1}{c|}{$22(it.)$}\\
\hline
$\lambda_{1}=2$    &2.00030120  &2.00007530  &2.00001882  &2.00000471 &2.00000118 &2.00000029\\  \hline
$\lambda_{2}=5$    &5.00129490  &5.00032372  &5.00008093  &5.00002023 &5.00000506 &5.00000126\\  \hline
$\lambda_{3}=5$    &5.00201852  &5.00050458  &5.00012614  &5.00003154 &5.00000788 &5.00000197\\  \hline
$\lambda_{4}=8$    &8.00481845  &8.00120474  &8.00030119  &8.00007530 &8.00001882 &8.00000471\\  \hline
$\lambda_{5}=10$   &10.00592410 &10.00148092 &10.00037022 &10.00009256&10.00002314&10.00000578\\ \hline
$\lambda_{6}=10$   &10.00592615 &10.00148105 &10.00037023 &10.00009256&10.00002314&10.00000578\\ \hline
$\lambda_{7}=13$   &13.00904908 &13.00226266 &13.00056569 &13.00014142&13.00003536&13.00000884\\ \hline
$\lambda_{8}=13$   &13.01514849 &13.00378646 &13.00094657 &13.00023664&13.00005916&13.00001479\\ \hline
$\lambda_{9}=17$   &17.01592318 &17.00397968 &17.00099485 &17.00024871&17.00006218&17.00001554\\ \hline
$\lambda_{10}=17$  &17.01631708 &17.00407809 &17.00101945 &17.00025486&17.00006371&17.00001593\\ \hline
$\lambda_{11}=18$  &18.02436417 &18.00609718 &18.00152468 &18.00038119&18.00009530&18.00002383\\ \hline
$\lambda_{12}=20$  &20.02650464 &20.00662628 &20.00165658 &20.00041414&20.00010354&20.00002588\\ \hline
$\lambda_{13}=20$  &20.02655291 &20.00662929 &20.00165677 &20.00041416&20.00010354&20.00002588\\ \hline
$\lambda_{14}=25$  &25.03383780 &25.00846626 &25.00211699 &25.00052927&25.00013232&25.00003308\\ \hline
$\lambda_{15}=25$  &25.05779711 &25.01444795 &25.00361190 &25.00090297&25.00022574&25.00005644\\ \hline
$\lambda_{16}=26$  &26.03646327 &26.00911235 &26.00227787 &26.00056945&26.00014236&26.00003559\\ \hline
$\lambda_{17}=26$  &26.03646513 &26.00911246 &26.00227788 &26.00056945&26.00014236&26.00003559\\ \hline
$\lambda_{18}=29$  &29.05122987 &29.01279949 &29.00319937 &29.00079981&29.00019995&29.00004999\\ \hline
$\lambda_{19}=29$  &29.05337468 &29.01333488 &29.00333317 &29.00083326&29.00020831&29.00005208\\ \hline
$stop.$            &8.7853e-11  &3.1262e-11  &3.5083e-11  & 3.5370e-11&3.8950e-11 &4.2572e-11 \\ \hline
\end{tabular}
\end{table}

\par Before analyzing numerical results, we first introduce some notations used in Table \ref{Table1}, which have the same meanings as following tables. We denote by $d.o.f.$ degrees of freedom, by $it.$ the number of iterations and by $stop.$ the total error between two adjacent iterative eigenvalues $\sum_{i=1}^{s}|\lambda_{i}^{k+1}-\lambda_{i}^{k}|$ when exiting the outer loop in our two-level BPJD method. Here we present the total error between two adjacent iterative eigenvalues $\sum_{i=1}^{s}|\lambda_{i}^{k+1}-\lambda_{i}^{k}|$ in order to verify our main convergence result. It is shown in Table 1 that the number of iterations of the proposed method keeps stable when $d.o.f.\to +\infty$, which illustrates that our method is optimal. It is seen from Figure \ref{optimality} that all of the curves of the total error $\sum_{i=1}^{s}|\lambda_{i}^{k+1}-\lambda_{i}^{k}|$ with different degrees of freedom coincide, which verifies that the convergence rate of the proposed method is independent of $h$. In order to test the scalability of our algorithm, we set $d.o.f.=16769025$ and the ratio $\frac{\delta}{H}=\frac{1}{4}$ to observe the relationship between the number of iterations and the number of subdomains.

\par  It is obvious to see in Table \ref{Table2} that the number of iterations decreases, as the number of subdomains increases, which shows that our algorithm is scalable. More intuitively, it is observed in Figure \ref{scalability} that curves of $\sum_{i=1}^{s}|\lambda_{i}^{k+1}-\lambda_{i}^{k}|$ with different subdomains are almost parallel, which illustrates that our algorithm has a good scalability. Although our theoretical analysis only holds for convex cases, our algorithm still works very well for nonconvex cases. We present some numerical results for the 2D Laplacian eigenvalue problem in L-shape domain.
\begin{table}[H]
 \centering
 \caption{$N=512,2048,8192$, $\frac{\delta}{H}=\frac{1}{4}$, $s=19$}\label{Table2}
\newcolumntype{d}{D{.}{.}{2}}
\begin{tabular}{|c|c|c|}
\hline
$N$&\multicolumn{1}{c|}{$d.o.f.$}&\multicolumn{1}{c|}{$it.$}\\
\hline
512&16769025&22\\
\hline
2048&16769025&18\\
\hline
8192&16769025&16\\
\hline
\end{tabular}
\end{table}

\begin{figure}[H]
  \centering
  \includegraphics[width=9cm,height=6cm]{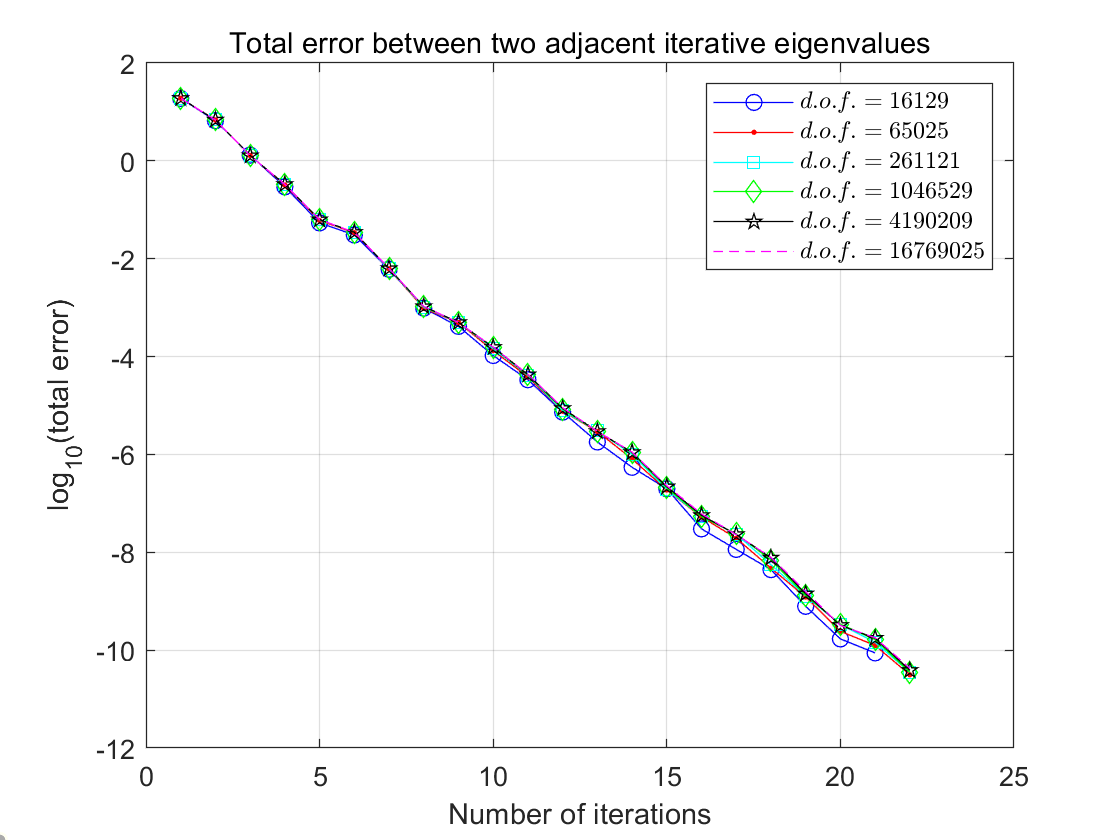}\\
  \caption{$N=512$,\ $\frac{\delta}{H}=\frac{1}{4}$,\ $s=19$}\label{optimality}
\end{figure}
\begin{figure}[H]
  \centering
  \includegraphics[width=9cm,height=6cm]{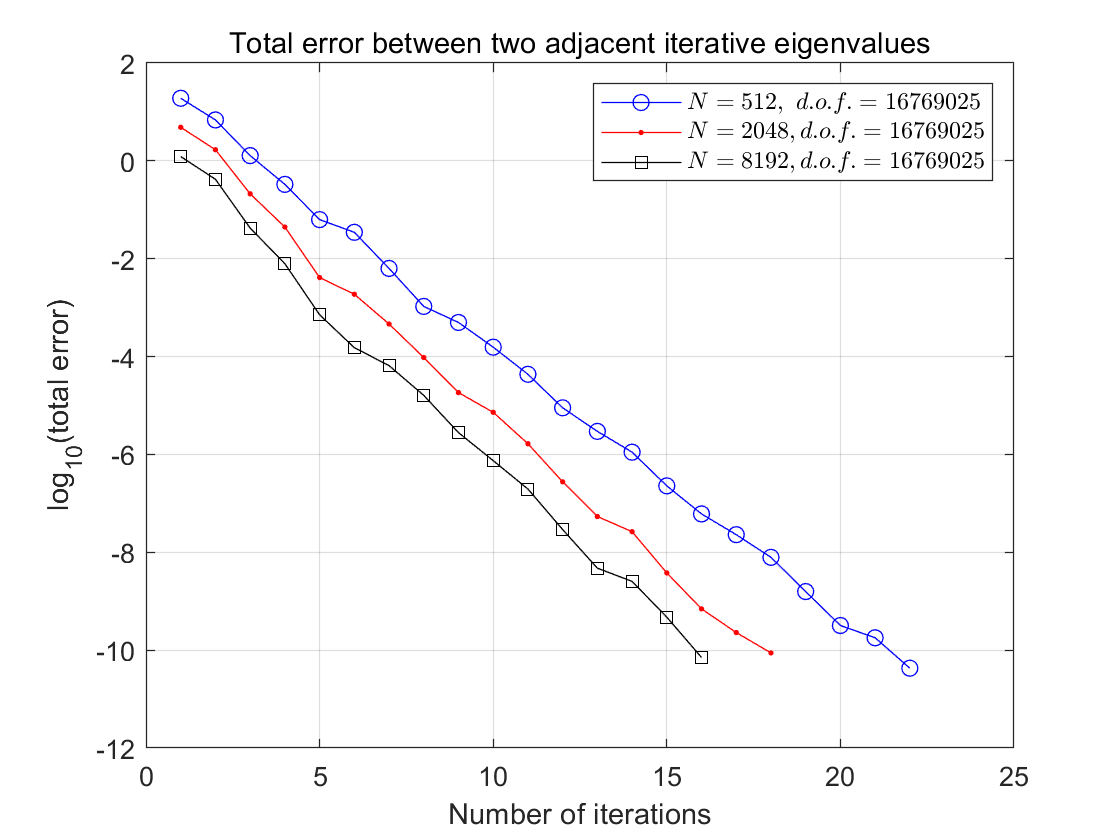}\\
  \caption{$d.o.f.=16769025$,\ $\frac{\delta}{H}=\frac{1}{4}$,\ $s=19$}\label{scalability}
\end{figure}

\begin{example}
We consider the Laplacian eigenvalue problem \eqref{Laplacian} in L-shape domain $(-\pi,\pi)^{2}\backslash[0,\pi)\times(-\pi,0]$ and use the triangle $P_{1}$-conforming finite element to compute the first $s$ eigenpairs. First, we choose an initial uniform partition $\mathcal{J}_{H}$ in $\Omega$ with $N=384$, $H=\frac{\sqrt{2}\pi}{2^{3}}$. We refine uniformly the grid layer by layer and fix the ratio $\frac{\delta}{H}=\frac{1}{4}$. Next, we also test the optimality and scalability of our algorithm.
\end{example}

\begin{table}[H]
 \centering
 \caption{ $N=384$, $\frac{\delta}{H}=\frac{1}{4}$, $s=20$}\label{Table3}
\newcolumntype{d}{D{.}{.}{2}}
\begin{tabular}{|c|c|c|c|c|c|c|}
\hline
\multicolumn{1}{|c|}{$d.o.f.$}&\multicolumn{1}{c|}{$12033$}&\multicolumn{1}{c|}{$48641$}
&\multicolumn{1}{c|}{$195585$}&\multicolumn{1}{c|}{$784385$}&\multicolumn{1}{c|}{$3141633$}&\multicolumn{1}{c|}{$12574721$}\\
\hline
\multicolumn{1}{|c|}{$\lambda_{i}$}
&\multicolumn{1}{c|}{$20(it.)$}&\multicolumn{1}{c|}{$21(it.)$}&\multicolumn{1}{c|}{$21(it.)$}
&\multicolumn{1}{c|}{$21(it.)$}&\multicolumn{1}{c|}{$21(it.)$}&\multicolumn{1}{c|}{$21(it.)$}\\
\hline
$\lambda_{1}$ &0.97779160 &0.97710672  &0.97685853 &0.97676592 &0.97673064 &0.97671700\\  \hline
$\lambda_{2}$ &1.54049997 &1.53997797  &1.53984721 &1.53981447 &1.53980628 &1.53980424\\  \hline
$\lambda_{3}$ &2.00120483 &2.00030120  &2.00007530 &2.00001882 &2.00000471 &2.00000118\\  \hline
$\lambda_{4}$ &2.99379382 &2.99181213  &2.99131661 &2.99119271 &2.99116174 &2.99115399\\  \hline
$\lambda_{5}$ &3.23787761 &3.23485049  &3.23390594 &3.23359515 &3.23348782 &3.23344923\\  \hline
$\lambda_{6}$ &4.20803816 &4.20392863  &4.20276004 &4.20241178 &4.20230243 &4.20226625\\  \hline
$\lambda_{7}$ &4.55973910 &4.55561148  &4.55457859 &4.55432018 &4.55425554 &4.55423937\\  \hline
$\lambda_{8}$ &5.00614392 &5.00153601  &5.00038400 &5.00009600 &5.00002400 &5.00000600\\  \hline
$\lambda_{9}$ &5.00710838 &5.00177713  &5.00044429 &5.00011107 &5.00002777 &5.00000694\\  \hline
$\lambda_{10}$&5.75583497 &5.74863270  &5.74667559 &5.74612373 &5.74596090 &5.74591032\\  \hline
$\lambda_{11}$&6.63909768 &6.62779594  &6.62497021 &6.62426358 &6.62408689 &6.62404271\\  \hline
$\lambda_{12}$&7.21353191 &7.20343396  &7.20072285 &7.19997123 &7.19975402 &7.19968809\\  \hline
$\lambda_{13}$&7.26354608 &7.25475836  &7.25256109 &7.25201174 &7.25187440 &7.25184007\\  \hline
$\lambda_{14}$&8.01928106 &8.00481942  &8.00120480 &8.00030120 &8.00007530 &8.00001882\\  \hline
$\lambda_{15}$&9.07298831 &9.05528703  &9.05030402 &9.04883566 &9.04838015 &9.04823118\\  \hline
$\lambda_{16}$&9.37765090 &9.35890171  &9.35420967 &9.35303625 &9.35274286 &9.35266950\\  \hline
$\lambda_{17}$&9.89039396 &9.87265655  &9.86821261 &9.86710081 &9.86682278 &9.86675326\\  \hline
$\lambda_{18}$&10.02364892&10.00592070 &10.00148071&10.00037021&10.00009255&10.00002314\\ \hline
$\lambda_{19}$&10.02376707&10.00592805 &10.00148116&10.00037024&10.00009256&10.00002314\\ \hline
$\lambda_{20}$&10.32191660&10.30197607 &10.29674258&10.29533080&10.29493647&10.29482144\\ \hline
$stop.$       &7.2318e-11 &3.8151e-11  &4.1099e-11 &4.2672e-11 &4.4586e-11 &4.6637e-11\\  \hline
\end{tabular}
\end{table}

\begin{table}[H]
 \centering
 \caption{$N=384,1536,6144$, $\frac{\delta}{H}=\frac{1}{4}$, $s=20$}\label{Table4}
\newcolumntype{d}{D{.}{.}{2}}
\begin{tabular}{|c|c|c|}
\hline
$N$&\multicolumn{1}{c|}{$d.o.f.$}&\multicolumn{1}{c|}{$it.$}\\
\hline
384 &12574721 &21\\   \hline
1536&12574721 &18\\   \hline
6144&12574721 &16\\   \hline
\end{tabular}
\end{table}
\par It is known that some eigenfunctions of the Laplacian eigenvalue problem have singularities at the re-entrant corner but our algorithm still works very well. The number of iterations of our algorithm keeps stable in Table \ref{Table3} as $d.o.f.\to +\infty$, i.e., the convergence rate of our algorithm is independent of $h$. The number of iterations decreases, as the number of subdomains increases in Table \ref{Table4}, which verifies that our algorithm is scalable.

\subsection{3D Laplacian eigenvalue problem}
\par  In order to illustrate that our theoretical analysis still holds for 3D cases, we design two experiments to verify it.
\begin{example}
We consider the Laplacian eigenvalue problem \eqref{Laplacian} in $(0,\pi)^{3}$ and use the trilinear conforming finite element to compute the first $s$ eigenpairs. First, we choose an initial uniform partition $\mathcal{J}_{H}$ in $\Omega$ with $N=512$, $H=\frac{\pi}{2^{3}}$. We refine uniformly the grid layer by layer and fix the ratio $\frac{\delta}{H}=\frac{1}{2}$. Next, we test the optimality and scalability of our algorithm.
\end{example}
\begin{table}[H]
 \centering
 \caption{ $N=512$, $\frac{\delta}{H}=\frac{1}{2}$, $s=20$}\label{Table5}
\newcolumntype{d}{D{.}{.}{2}}
\begin{tabular}{|c|c|c|c|c|c|}
\hline
\multicolumn{1}{|c|}{$d.o.f.$}&\multicolumn{1}{c|}{$3375$}&\multicolumn{1}{c|}{$29791$}
&\multicolumn{1}{c|}{$250047$}&\multicolumn{1}{c|}{$2048383$}&\multicolumn{1}{c|}{$16581375$}\\
\hline
\multicolumn{1}{|c|}{$\lambda_{i}$}
&\multicolumn{1}{c|}{$14(it.)$}&\multicolumn{1}{c|}{$16(it.)$}&\multicolumn{1}{c|}{$16(it.)$}
&\multicolumn{1}{c|}{$17(it.)$}&\multicolumn{1}{c|}{$17(it.)$}\\
\hline
$\lambda_{1}=3$    &3.00965062  &3.00241034  &3.00060244  &3.00015060 &3.00003765   \\  \hline
$\lambda_{2}=6$    &6.05809793  &6.01447439  &6.00361542  &6.00090366 &6.00022590  \\  \hline
$\lambda_{3}=6$    &6.05809793  &6.01447439  &6.00361542  &6.00090366 &6.00022590   \\  \hline
$\lambda_{4}=6$    &6.05809793  &6.01447439  &6.00361542  &6.00090366 &6.00022590   \\  \hline
$\lambda_{5}=9 $   &9.10654523  &9.02653844  &9.00662840  &9.00165671 &9.00041415  \\ \hline
$\lambda_{6}=9 $   &9.10654523  &9.02653844  &9.00662840  &9.00165671 &9.00041415  \\ \hline
$\lambda_{7}=9 $   &9.10654523  &9.02653844  &9.00662840  &9.00165671 &9.00041415  \\ \hline
$\lambda_{8}=11$   &11.26956430 &11.06685176 &11.01667797 &11.00416729&11.00104168  \\ \hline
$\lambda_{9}=11$   &11.26956430 &11.06685176 &11.01667797 &11.00416729&11.00104168  \\ \hline
$\lambda_{10}=11$  &11.26956430 &11.06685176 &11.01667797 &11.00416729&11.00104168  \\ \hline
$\lambda_{11}=12$  &12.15499254 &12.03860249 &12.00964138 &12.00240976&12.00060240  \\ \hline
$\lambda_{12}=14$  &14.31801161 &14.07891581 &14.01969094 &14.00492034&14.00122994  \\ \hline
$\lambda_{13}=14$  &14.31801161 &14.07891581 &14.01969094 &14.00492034&14.00122994  \\ \hline
$\lambda_{14}=14$  &14.31801161 &14.07891581 &14.01969094 &14.00492034&14.00122994  \\ \hline
$\lambda_{15}=14$  &14.31801161 &14.07891581 &14.01969094 &14.00492034&14.00122994  \\ \hline
$\lambda_{16}=14$  &14.31801161 &14.07891581 &14.01969094 &14.00492034&14.00122994  \\ \hline
$\lambda_{17}=14$  &14.31801161 &14.07891581 &14.01969094 &14.00492034&14.00122994  \\ \hline
$\lambda_{18}=17$  &17.36645892 &17.09097986 &17.02270392 &17.00567340&17.00141819  \\ \hline
$\lambda_{19}=17$  &17.36645892 &17.09097986 &17.02270392 &17.00567340&17.00141819  \\ \hline
$\lambda_{20}=17$  &17.36645892 &17.09097986 &17.02270392 &17.00567340&17.00141819  \\ \hline
$stop.$            &8.9527e-11  &3.6276e-11  &5.3570e-11  &2.8785e-11 & 4.5785e-11   \\ \hline
\end{tabular}
\end{table}

\begin{table}[H]
 \centering
 \caption{$N=512,4096$, $\frac{\delta}{H}=\frac{1}{2}$, $s=20$}\label{Table6}
\newcolumntype{d}{D{.}{.}{2}}
\begin{tabular}{|c|c|c|}
\hline
$N$&\multicolumn{1}{c|}{$d.o.f.$}&\multicolumn{1}{c|}{$it.$}\\
\hline
512&16581375&17\\
\hline
4096&16581375&15\\
\hline
\end{tabular}
\end{table}
\par It is seen from Table \ref{Table5} that the number of iterations of our algorithm keeps stable nearly, as $d.o.f.\to +\infty$, which shows that our algorithm is optimal. To verify the scalability of the method, we set $d.o.f.=16581375$ and observe the number of iterations for $N =512,\ 4096$. Numerical results in Table \ref{Table6} show the number of iterations decreases as $N$ increases, which means that the proposed method has a good scalability. Next, we also present some numerical results for three dimensional L-shape domain.

\begin{example}
We consider the Laplacian eigenvalue problem \eqref{Laplacian} in $(0,2\pi)\times (0,2\pi)\times (0,\pi)\backslash
 [\pi,2\pi)\times [\pi,2\pi)\times(0,\pi)$ and use the trilinear conforming finite element to compute the first $s$ eigenpairs. First, we choose an initial uniform partition $\mathcal{J}_{H}$ in $\Omega$ with $N=1536$, $H=\frac{\pi}{2^{3}}$. We refine uniformly the grid layer by layer and fix the ratio $\frac{\delta}{H}=\frac{1}{2}$. Next, we also test the optimality and scalability of our algorithm.
\end{example}

\par It is observed from Table \ref{Table7} that the number of iterations keeps stable nearly when $d.o.f. \to +\infty$, which verifies that the method is optimal for nonconvex domain. In addition, if we observe Table  \ref{Table7} carefully, we may find that some of eigenvalues are close to each other ($\lambda_{17}\approx \lambda_{18}\approx\lambda_{19}$) and our algorithm works still very well, which illustrates that the convergence rate in our two-level BPJD method is not adversely affected by the gap among the clustered eigenvalues. It is obvious to see that the number of iterations decreases as the number of subdomains increases in Table  \ref{Table8}, which shows that our algorithm is scalable.

\begin{table}[H]
 \centering
 \caption{ $N=1536$, $\frac{\delta}{H}=\frac{1}{2}$, $s=20$}\label{Table7}
\newcolumntype{d}{D{.}{.}{2}}
\begin{tabular}{|c|c|c|c|c|}
\hline
\multicolumn{1}{|c|}{$d.o.f.$}&\multicolumn{1}{c|}{$10575$}&\multicolumn{1}{c|}{$91295$}
&\multicolumn{1}{c|}{$758079$}&\multicolumn{1}{c|}{$6177407$}\\
\hline
\multicolumn{1}{|c|}{$\lambda_{i}$}&\multicolumn{1}{c|}{$15(it.)$}&\multicolumn{1}{c|}{$17(it.)$}
&\multicolumn{1}{c|}{$18(it.)$}&\multicolumn{1}{c|}{$18(it.)$}\\
\hline
$\lambda_{1}  $  &1.98468171 &1.97908729  &1.97745736  &1.97695688  \\  \hline
$\lambda_{2}  $  &2.54796654 &2.54184555  &2.54031441  &2.53993134  \\  \hline
$\lambda_{3}  $  &3.00965062 &3.00241034  &3.00060244  &3.00015060  \\  \hline
$\lambda_{4}  $  &4.01258997 &3.99650322  &3.99248898  &3.99148580  \\  \hline
$\lambda_{5}  $  &4.26520929 &4.24231072  &4.23602425  &4.23422553  \\ \hline
$\lambda_{6}  $  &5.03312902 &4.99115134  &4.98047034  &4.97770994  \\ \hline
$\lambda_{7}  $  &5.25609401 &5.21633525  &5.20604572  &5.20330844  \\ \hline
$\lambda_{8}  $  &5.59641384 &5.55390960  &5.54332739  &5.54068439  \\ \hline
$\lambda_{9}  $  &5.61240213 &5.56874371  &5.55786078  &5.55514087  \\ \hline
$\lambda_{10} $  &6.05809793 &6.01447439  &6.00361542  &6.00090366  \\ \hline
$\lambda_{11} $  &6.05809793 &6.01447439  &6.00361542  &6.00090366  \\ \hline
$\lambda_{12} $  &6.05809793 &6.01447439  &6.00361542  &6.00090366  \\ \hline
$\lambda_{13} $  &6.81375190 &6.76357620  &6.75061942  &6.74719395  \\ \hline
$\lambda_{14} $  &7.06103728 &7.00856727  &6.99550196  &6.99223885  \\ \hline
$\lambda_{15} $  &7.31365660 &7.25437477  &7.23903723  &7.23497859  \\ \hline
$\lambda_{16} $  &7.71618710 &7.64709074  &7.62979692  &7.62547071  \\ \hline
$\lambda_{17} $  &8.30454131 &8.22839930  &8.20905870  &8.20215738  \\ \hline
$\lambda_{18} $  &8.34143977 &8.23586694  &8.20907121  &8.20406149  \\ \hline
$\lambda_{19} $  &8.37680647 &8.28281881  &8.25955996  &8.25376045  \\ \hline
$\lambda_{20} $  &8.66084944 &8.58080775  &8.56087375  &8.55589393  \\ \hline
$stop.$          &7.5267e-11 &5.4056e-11  &1.8214e-11  &3.6381e-11  \\ \hline
\end{tabular}
\end{table}

\begin{table}[H]
 \centering
 \caption{$N=1536,12288$, $\frac{\delta}{H}=\frac{1}{2}$, $s=20$}\label{Table8}
\newcolumntype{d}{D{.}{.}{2}}
\begin{tabular}{|c|c|c|}
\hline
$N$&\multicolumn{1}{c|}{$d.o.f.$}&\multicolumn{1}{c|}{$it.$}\\
\hline
1536&6177407&18\\
\hline
12288&6177407&15\\
\hline
\end{tabular}
\end{table}

\section{Conclusions}

\par In this paper, based on a domain decomposition method, we propose a parallel two-level BPJD method for computing multiple and clustered eigenvalues. The method is proved to be optimal, scalable and cluster robust. Numerical results verify our theoretical findings.

\begin{appendices}
\section{\ }
\par\noindent{\bf Proof of Theorem \ref{TheoremPriorLaplacian}}:\ \  Let $ \widetilde{i}$ be an imaginary unit, and $\Gamma$ be a circle which includes $\mu_{1},\mu_{2},...,\mu_{s}$ and $\mu_{1}^{h},\mu_{2}^{h},...,\mu_{s}^{h},$  with $(\frac{\mu_{1}+\mu_{s}}{2},0)$ as a center and $\frac{\mu_{1}-\mu_{s+1}}{2}$ as a radius in complex plane $\mathcal{C}$. Define
\begin{equation}\notag
Z:=\frac{1}{2\pi \widetilde{i}}\int_{\Gamma}(z-T)^{-1}dz,\ \ Z^{h}:=\frac{1}{2\pi \widetilde{i}}\int_{\Gamma}(z-T^{h})^{-1}dz.
\end{equation}
Therefore, we know that $Z$ and $Z^{h}$ are spectral projectors associated with $T$ and $\mu_{1},...,\mu_{s}$, and $T^{h}$ and $\mu_{1}^{h},...,\mu_{s}^{h}$, respectively.
\begin{figure}[H]
  \centering
  \includegraphics[width=11cm,height=6cm]{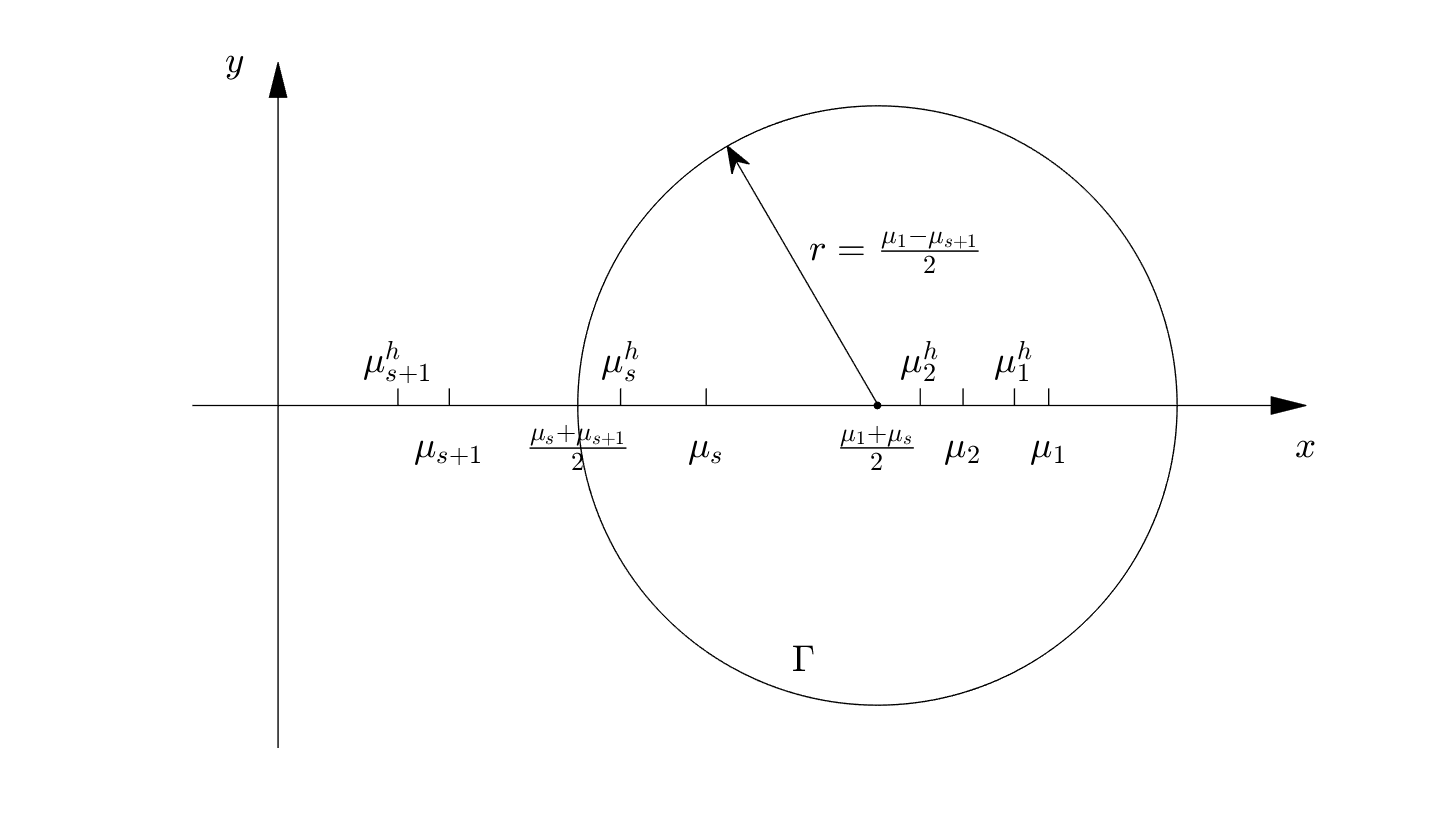}\\
  \caption{$\Gamma$ is a circle which includes $\mu_{1},\mu_{2},...,\mu_{s}$ and $\mu_{1}^{h},\mu_{2}^{h},...,\mu_{s}^{h}.$}\label{Figure1}
\end{figure}
Combining \eqref{equation_DefineT}, \eqref{equation_DefineTh} and
standard a priori error estimates of conforming finite element methods, we have
$$ \interleave  T-T^{h}  \interleave_{a}\leq Ch.$$
Moreover, we have
\begin{equation}\label{Equation_z_Th}
\sup_{z\in \Gamma}\interleave (z-T^{h})^{-1}\interleave_{a}\leq C.
\end{equation}
Since $TT^{*}=T^{*}T$, we get
\begin{equation}\label{Equation_z_T_normal}
(z-T)(\bar{z}-T^{*})=|z|^{2}-zT^{*}-\bar{z}T+TT^{*}
=|z|^{2}-zT^{*}-\bar{z}T+T^{*}T=(\bar{z}-T^{*})(z-T).
\end{equation}
From a priori error estimates in \cite{babuvska1989finite}, we know that $\dim(U_{s})=s=\dim(U_{s}^{h})$,
which, together with \eqref{Equation_z_Th}, \eqref{Equation_z_T_normal} and Remark \ref{Remark_Hausdorff}, yields
\begin{align}
\theta_{a}(U_{s},U_{s}^{h})
&=\sin_{a}\{U_{s},U_{s}^{h}\}=\sup_{u\in U_{s},||u||_{a}=1}\inf_{v\in U_{s}^{h}}||u-v||_{a}
\leq \sup_{u\in U_{s},||u||_{a}=1}||u-Z^{h}u||_{a}\notag\\
&\leq \interleave (Z-Z^{h})|_{U_{s}}\interleave_{a}
=\frac{1}{2\pi}\interleave \int_{\Gamma}(z-T^{h})^{-1}(T-T^{h})(z-T)^{-1}|_{U_{s}}dz\interleave_{a}\notag\\
&\leq \frac{1}{2\pi}|\Gamma|\sup_{z\in \Gamma}\interleave (z-T^{h})^{-1}\interleave_{a}\sup_{z\in \Gamma}\interleave(z-T)^{-1}|_{U_{s}}\interleave_{a} \interleave(T-T^{h})|_{U_{s}}\interleave_{a}\notag\\
&\leq C\sup_{z\in \Gamma}\frac{1}{\inf_{1\leq i\leq s}|z-\mu_{i}|}
\times \interleave (T-T^{h})|_{U_{s}}\interleave_{a}
\leq \frac{Ch}{\mu_{s}-\mu_{s+1}}. \notag
\end{align}
Hence, we obtain\eqref{Equation_eigenvectorApriori1}. Combining the Aubin-Nitsche argument, we have
$$\interleave T-T^{h}\interleave_{b}\leq Ch^{2}.$$
By the same argument as in the proof of \eqref{Equation_eigenvectorApriori1}, we may also obtain \eqref{Equation_eigenvectorApriori2}. \qed
\par To give a rigorous proof of Lemma \ref{Lemma_AuxiliaryProblem} in this paper, we first introduce the following lemma
(For the detailed proof, see Lemma 5 in \cite{ovtchinnikov2002convergence} or Lemma 2 of Appendix A in \cite{ovtchinnikov1}).
For any matrix $X$, we denote by $D_{X}$ the diagonal part of $X$, $\bar{D}_{X}=D_{X}-X$. And we denote by ${\rm Tr}(X)$ the trace of the matrix $X$.
\begin{lemma}\label{Lemma_TraceEquality}
Let $A\ (=\Lambda-\widetilde{A})$ be a symmetric and $B\ (=I-\widetilde{B}) $ a symmetric positive definite matrix, where $\Lambda$ is a diagonal matrix and $I$ is the identity matrix. Then
\begin{equation}\notag
{\rm Tr}(B^{-1}A)={\rm Tr}(D_{B}^{-1}D_{A})-{\rm Tr}(A_{1})+{\rm Tr}(A_{2})+{\rm Tr}(A_{3}),
\end{equation}
where
\begin{align*}
&A_{1}=D_{B}^{-1}\widetilde{B} D_{B}^{-1}(\widetilde{A}+D),\ \ \ A_{2}=D_{B}^{-1}D_{\widetilde{B}} D_{B}^{-1}(D_{\widetilde{A}}+D),\ \ \ A_{3}=D_{B}^{-1}\bar{D}_{B}B^{-1}\bar{D}_{B}D_{B}^{-1}A,
\end{align*}
and $D$ is any diagonal matrix.
\end{lemma}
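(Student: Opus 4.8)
The plan is to treat this as a purely algebraic statement about matrices, proved by peeling the diagonal part off $B^{-1}$ twice by means of a resolvent-type identity. Since $B=I-\widetilde{B}$ is symmetric positive definite, its diagonal $D_{B}$ has positive entries and is therefore invertible, so all of $A_{1},A_{2},A_{3}$ are well-defined. The elementary identity $B^{-1}-D_{B}^{-1}=D_{B}^{-1}(D_{B}-B)B^{-1}=D_{B}^{-1}\bar{D}_{B}B^{-1}$, together with its mirror form $B^{-1}-D_{B}^{-1}=B^{-1}\bar{D}_{B}D_{B}^{-1}$, yields after one substitution the expansion
\begin{equation}\notag
B^{-1}=D_{B}^{-1}+D_{B}^{-1}\bar{D}_{B}D_{B}^{-1}+D_{B}^{-1}\bar{D}_{B}B^{-1}\bar{D}_{B}D_{B}^{-1}.
\end{equation}
Multiplying on the right by $A$ and taking traces, the last term is exactly ${\rm Tr}(A_{3})$, so the identity reduces to showing ${\rm Tr}(D_{B}^{-1}A)+{\rm Tr}(D_{B}^{-1}\bar{D}_{B}D_{B}^{-1}A)={\rm Tr}(D_{B}^{-1}D_{A})-{\rm Tr}(A_{1})+{\rm Tr}(A_{2})$.

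The single structural fact used repeatedly is that ${\rm Tr}(MNM')=0$ whenever $M,M'$ are diagonal and $N$ has vanishing diagonal. Since $A-D_{A}=-\bar{D}_{A}$ has vanishing diagonal and $D_{B}^{-1}$ is diagonal, this gives ${\rm Tr}(D_{B}^{-1}A)={\rm Tr}(D_{B}^{-1}D_{A})$ at once. For the remaining term I would insert the hypotheses $B=I-\widetilde{B}$ and $A=\Lambda-\widetilde{A}$: these give $\bar{D}_{B}=D_{B}-B=\widetilde{B}-D_{\widetilde{B}}$, the off-diagonal part of $\widetilde{B}$, and also $A=-(\widetilde{A}+D)+(\Lambda+D)$ with $D$ the arbitrary diagonal matrix. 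Expanding ${\rm Tr}(D_{B}^{-1}\bar{D}_{B}D_{B}^{-1}A)$ with these substitutions produces $-{\rm Tr}(D_{B}^{-1}\widetilde{B}D_{B}^{-1}(\widetilde{A}+D))=-{\rm Tr}(A_{1})$, plus ${\rm Tr}(D_{B}^{-1}D_{\widetilde{B}}D_{B}^{-1}(\widetilde{A}+D))$, plus ${\rm Tr}(D_{B}^{-1}(\widetilde{B}-D_{\widetilde{B}})D_{B}^{-1}(\Lambda+D))$. The last of these is a trace of (diagonal)$\times$(off-diagonal)$\times$(diagonal), hence $0$; and the middle one differs from ${\rm Tr}(A_{2})={\rm Tr}(D_{B}^{-1}D_{\widetilde{B}}D_{B}^{-1}(D_{\widetilde{A}}+D))$ by ${\rm Tr}(D_{B}^{-1}D_{\widetilde{B}}D_{B}^{-1}(\widetilde{A}-D_{\widetilde{A}}))$, again a diagonal-times-off-diagonal trace equal to $0$. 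Collecting the surviving pieces gives $-{\rm Tr}(A_{1})+{\rm Tr}(A_{2})$, which closes the argument.

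The computation is mechanical, so the one thing requiring genuine care — and what I would spell out in full — is the bookkeeping of which factors are diagonal and which have vanishing diagonal, since this is precisely what makes every unwanted term collapse, after a cyclic shift, to a product of a diagonal matrix with an off-diagonal one. It is also the reason the arbitrary diagonal $D$ may be inserted for free: the $D$-dependent parts of $-{\rm Tr}(A_{1})$ and $+{\rm Tr}(A_{2})$ cancel against each other for the same diagonal/off-diagonal reason, so the right-hand side is independent of $D$, consistent with the left-hand side containing no $D$ at all. One may also note that symmetry of $A$ and $B$ is not actually needed for the identity; only invertibility of $B$ and of $D_{B}$ is used, and the latter is guaranteed by the positive definiteness assumed in the statement.
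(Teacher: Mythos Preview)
Your proof is correct. The resolvent-type expansion $B^{-1}=D_{B}^{-1}+D_{B}^{-1}\bar{D}_{B}D_{B}^{-1}+D_{B}^{-1}\bar{D}_{B}B^{-1}\bar{D}_{B}D_{B}^{-1}$ is derived cleanly, and the subsequent bookkeeping with diagonal/off-diagonal factors is accurate; in particular the three trace cancellations you identify all hold for the reason you state. Your side remarks --- that the $D$-dependent pieces of $-{\rm Tr}(A_{1})$ and ${\rm Tr}(A_{2})$ cancel, and that symmetry of $A,B$ is not actually used --- are also correct.

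Note, however, that the paper does \emph{not} supply its own proof of this lemma: it merely quotes the statement and refers the reader to Lemma~5 of \cite{ovtchinnikov2002convergence} and Lemma~2 of Appendix~A in \cite{ovtchinnikov1}. So there is no in-paper argument to compare against; your self-contained derivation in fact fills in what the paper outsources, and follows the same two-step expansion that those references use.
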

\par\noindent{\bf Proof of Lemma \ref{Lemma_AuxiliaryProblem}}:\ \
We consider the auxiliary eigenvalue problem \eqref{AuxiliaryEigenvalueProblem} resulting in
\begin{equation}\label{equation_GeneralizedEigenvalue}
A\xi_{i}^{k+1}=\hat{\lambda}^{k+1}_{i}B\xi_{i}^{k+1},
\end{equation}
where $A=(a(\widetilde{u}_{j}^{k+1},\widetilde{u}_{i}^{k+1}))_{1\leq i,j\leq s}$, $B=(b(\widetilde{u}_{j}^{k+1},\widetilde{u}_{i}^{k+1}))_{1\leq i,j\leq s}$ and $\xi_{i}^{k+1}$ is the coordinate of $\hat{u}_{i}^{k+1}$ in the basis $\{\widetilde{u}_{j}^{k+1}\}_{j=1}^{s}$. Define $z_{i}^{k}:=Q_{\perp}^{k}(B_{i}^{k})^{-1}r_{i}^{k},\ i=1,2,...,s$. Substituting \eqref{Equation_construction_utilde} into $b(\cdot,\cdot)$ and $a(\cdot,\cdot)$, we have
\begin{align}
b(\widetilde{u}_{j}^{k+1},\widetilde{u}_{i}^{k+1})=\delta_{ij}+
\alpha_{j}^{k}\alpha_{i}^{k}b(z_{j}^{k},z_{i}^{k})
=:\delta_{ij}+(\hat{B})_{ij},\label{B}
\end{align}
and
\begin{align}
a(\widetilde{u}_{j}^{k+1},\widetilde{u}_{i}^{k+1})
&=\lambda_{j}^{k}\delta_{ij}
+\alpha_{i}^{k}a(u_{j}^{k},z_{i}^{k})
+\alpha_{j}^{k}a(z_{j}^{k},u_{i}^{k})
+\alpha_{j}^{k}\alpha_{i}^{k}a(z_{j}^{k},z_{i}^{k})
=:\lambda^{k}_{j}\delta_{ij}+(\hat{A})_{ij}.\label{A}
\end{align}
By \eqref{B}, it is easy to check that $\hat{B}\geq \bm{0}_{s\times s}$ and $B\ (=I+\hat{B}\ \geq I)$ is symmetric and positive definite. Moreover, by Lemma \ref{Lemma_gik}, Lemma \ref{Lemma_Relationu1e2} and Remark \ref{remark_QusrikEstimate}, we obtain
\begin{align}
&\ \ \ \ ||D_{\hat{B}}||_{F}
\leq||\hat{B}||_{F}\leq \sqrt{s}||\hat{B}||_{2}
\leq \sqrt{s} {\rm Tr}(\hat{B})
=\sqrt{s}\sum_{i=1}^{s}(\alpha_{i}^{k})^{2}||z_{i}^{k}||_{b}^{2}
\leq C\sum_{i=1}^{s}||(B_{i}^{k})^{-1}r_{i}^{k}||_{b}^{2}\notag\\
&\leq C\sum_{i=1}^{s}\{C||e_{i,s+1}^{k}||_{E_{i}^{k}}^{2}+CH^{4}||g_{i}^{k}||_{a}^{2}\}
\leq C\sum_{i=1}^{s}(\lambda_{i}^{k}-Rq(Q_{s}^{h}u_{i}^{k}))+CH^{4}\sum_{i=1}^{s}(\lambda_{i}^{k}-\lambda_{i}^{h}),\label{hatB}
\end{align}
where $||\cdot||_{F}$ and $||\cdot||_{2}$ denote the Frobenius norm and $2$-norm of matrix, respectively.
Using the same argument as in \eqref{hatB}, we deduce
\begin{align}
&\ \ \ \ ||\bar{D}_{B}||_{F}^{2}
= ||\bar{D}_{\hat{B}}||_{F}^{2}
\leq \sum_{i,j=1}^{s}(\alpha_{j}^{k}\alpha_{i}^{k})^{2}|b(z_{j}^{k},z_{i}^{k})|^{2}
\leq C\sum_{i,j=1}^{s}||z_{i}^{k}||_{b}^{2}||z_{j}^{k}||_{b}^{2}\notag\\
&= C\sum_{i=1}^{s}||z_{i}^{k}||_{b}^{2}\sum_{j=1}^{s}||z_{j}^{k}||_{b}^{2}
\leq CH^{2}\sum_{i=1}^{s}(\lambda_{i}^{k}-Rq(Q_{s}^{h}u_{i}^{k}))
+CH^{6}\sum_{i=1}^{s}(\lambda_{i}^{k}-\lambda_{i}^{h}).\label{barDB}
\end{align}
By \eqref{A}, it is easy to check that $A\ (=\Lambda+\hat{A})$ is a symmetric matrix, where $\Lambda=Diag(\lambda_{1}^{k},\lambda_{2}^{k},...,\lambda_{s}^{k})$.
Moreover, by Lemma \ref{Lemma_NormQs+1h}, Lemma \ref{Lemma_Relationu1e2} and Remark \ref{remark_QusrikEstimate}, we get
\begin{align*}
&\ \ \ \ ||D_{\hat{A}}||^{2}_{F}
\leq ||\hat{A}||^{2}_{F}
=\sum_{i,j=1}^{s}\{\alpha_{i}^{k}a(u_{j}^{k},z_{i}^{k})
+\alpha_{j}^{k}a(z_{j}^{k},u_{i}^{k})+\alpha_{j}^{k}\alpha_{i}^{k}a(z_{j}^{k},z_{i}^{k})\}^{2}\\
&\leq C(\sum_{i=1}^{s}||z_{i}^{k}||^{2}_{a}+\sum_{i,j=1}^{s}||z_{j}^{k}||^{2}_{a}||z_{i}^{k}||_{a}^{2})
\leq C(1+\sum_{j=1}^{s}||z_{j}^{k}||^{2}_{a})\sum_{i=1}^{s}||z_{i}^{k}||^{2}_{a}
\leq CH^{2},
\end{align*}
and
\begin{align}
||A||_{2} &\leq {\rm Tr}(A)=\sum_{i=1}^{s}\{\lambda_{i}^{k}+2\alpha_{i}^{k}a(u_{i}^{k},z_{i}^{k})
+(\alpha_{i}^{k})^{2}a(z_{i}^{k},z_{i}^{k})\}\notag\\
&\leq \sum_{i=1}^{s}\{\lambda_{i}^{k}+C||z_{i}^{k}||_{a}
+C||z_{i}^{k}||_{a}^{2}\}
\leq \sum_{i=1}^{s}\lambda_{i}^{k}+CH+CH^{2}\leq C.
\end{align}
By \eqref{AuxiliaryEigenvalueProblem} and \eqref{equation_GeneralizedEigenvalue}, we deduce
\begin{equation}\label{Transformation1}
\sum_{i=1}^{s}(\hat{\lambda}_{i}^{k+1}-Rq(\widetilde{u}_{i}^{k+1}))={\rm Tr}(B^{-1}A)-{\rm Tr}(D_{B}^{-1}D_{A}).
\end{equation}
Using Lemma \ref{Lemma_TraceEquality}, we have
\begin{equation}\label{Transformation2}
{\rm Tr}(B^{-1}A)-{\rm Tr}(D_{B}^{-1}D_{A})=-{\rm Tr}(A_{1})+{\rm Tr}(A_{2})+{\rm Tr}(A_{3})\leq |{\rm Tr}(A_{1})|+|{\rm Tr}(A_{2})|+|{\rm Tr}(A_{3})|,
\end{equation}
where
\begin{align*}
A_{1}&=D_{B}^{-1}(-\hat{B})D_{B}^{-1}(-\hat{A})=D_{B}^{-1}\hat{B}D_{B}^{-1}\hat{A},\\
A_{2}&=D_{B}^{-1}D_{-\hat{B}} D_{B}^{-1}D_{-\hat{A}}=D_{B}^{-1}D_{\hat{B}} D_{B}^{-1}D_{\hat{A}},\\
A_{3}&=D_{B}^{-1}\bar{D}_{B}B^{-1}\bar{D}_{B}D_{B}^{-1}A.
\end{align*}
We first estimate the term $|{\rm Tr}(A_{1})|$ in \eqref{Transformation2}. Since $D_{B}\ (\geq I),\ D_{B}^{-1}\hat{B}D_{B}^{-1}$, $\hat{A}$ and $\hat{B}$ are symmetric, we get
\begin{align}
|{\rm Tr}(A_{1})|
&=|{\rm Tr}(D_{B}^{-1}\hat{B}D_{B}^{-1}\hat{A})|
\leq ||D_{B}^{-1}\hat{B}D_{B}^{-1}||_{F}||\hat{A}||_{F}
\leq ||\hat{B}||_{F}||\hat{A}||_{F}\notag\\
&\leq CH\sum_{i=1}^{s}(\lambda_{i}^{k}-Rq(Q_{s}^{h}u_{i}^{k}))
+CH^{5}\sum_{i=1}^{s}(\lambda_{i}^{k}-\lambda_{i}^{h}).\label{TraceA1}
\end{align}
Next, we estimate the term $|{\rm Tr}(A_{2})|$ in \eqref{Transformation2}. As $D_{B}\ (\geq I),\ D_{B}^{-1}D_{\hat{B}}D_{B}^{-1},\ D_{\hat{A}}$ and $D_{\hat{B}}$ are symmetric, we obtain
\begin{align}
|{\rm Tr}(A_{2})|
&=|{\rm Tr}(D_{B}^{-1}D_{\hat{B}}D_{B}^{-1}D_{\hat{A}})|
\leq||D_{B}^{-1}D_{\hat{B}} D_{B}^{-1}||_{F}||D_{\hat{A}}||_{F}
\leq||D_{\hat{B}}||_{F}||D_{\hat{A}}||_{F}\notag\\
&\leq CH\sum_{i=1}^{s}(\lambda_{i}^{k}-Rq(Q_{s}^{h}u_{i}^{k}))
+CH^{5}\sum_{i=1}^{s}(\lambda_{i}^{k}-\lambda_{i}^{h}).\label{TraceA2}
\end{align}
Finally, we estimate the term $|{\rm Tr}(A_{3})|$ in \eqref{Transformation2}. By the same argument as in \eqref{TraceA2}, we have
\begin{align*}
|{\rm Tr}(A_{3})|
&=|{\rm Tr}(D_{B}^{-1}\bar{D}_{B}B^{-1}\bar{D}_{B}D_{B}^{-1}A)|
\leq ||D_{B}^{-1}\bar{D}_{B}B^{-1}\bar{D}_{B}D_{B}^{-1}||_{F}||A||_{F}\\
&\leq ||\bar{D}_{B}B^{-1}\bar{D}_{B}||_{F}||A||_{F}
\leq  ||\bar{D}_{B}||^{2}_{F}||B^{-1}||_{2}||A||_{F}
\leq ||\bar{D}_{B}||^{2}_{F}||A||_{F}\\
&\leq \sqrt{s}||\bar{D}_{B}||^{2}_{F}||A||_{2}
\leq CH^{2}\sum_{i=1}^{s}(\lambda_{i}^{k}-Rq(Q_{s}^{h}u_{i}^{k}))
+CH^{6}\sum_{i=1}^{s}(\lambda_{i}^{k}-\lambda_{i}^{h}),
\end{align*}
which, together with \eqref{Transformation1}, \eqref{Transformation2},\eqref{TraceA1} and \eqref{TraceA2}, completes the proof of this lemma. \qed
\end{appendices}

\begin{small}
\bibliographystyle{plain}
\bibliography{reference}
\end{small}
\end{document}